\newcommand{\version}{5 July 2023}
\pgfplotsset{compat=1.15}
\edef\texforht{TT\noexpand\fi
  \@ifpackageloaded{tex4ht}
    {\noexpand\iftrue}
    {\noexpand\iffalse}}
\newcommand{\ds}{\displaystyle}
\providecommand{\emph}[1]{{\it #1}}
\renewcommand{\emph}[1]{{\it #1}}
\definecolor{MyDarkGreen}{rgb}{0,0.6,0.1}
\providecommand{\eprint}[1]{}
\renewcommand{\eprint}[1]{arXiv:\href{http://arxiv.org/abs/#1}{#1}}
\providecommand{\eqref}[1]{{\rm (\ref{#1})}}
\providecommand{\itref}[1]{{\it (\ref{#1})}}
\DeclareSymbolFont{AMSb}{U}{msb}{m}{n}
\DeclareSymbolFontAlphabet{\mathbb}{AMSb}
\DeclareSymbolFont{EUB}{U}{eur}{b}{n}
\DeclareSymbolFontAlphabet{\eub}{EUB}
\newcommand{\jj}{\mathrm{i}}
\newcommand{\calP}{\mathcal{P}}
\newcommand{\notyet}[1]{{}}
\newcommand{\sgn}{\mathop{\rm sgn}}
\newcommand{\range}{\mathop{\rm Range\,}}
\newcommand{\supp}{\mathop{\rm supp}}
\newcommand{\p}{\partial}
\newcommand{\at}[1]{\vert\sb{\sb{#1}}}
\newcommand{\R}{\mathbb{R}}
\newcommand{\C}{\mathbb{C}}
\newcommand{\bfA}{\mathbf{A}}
\newcommand{\bfH}{\mathbf{H}}
\newcommand{\bfX}{\mathbf{X}}
\newcommand{\N}{\mathbb{N}}
\newcommand{\abs}[1]{\vert #1 \vert}
\newcommand{\norm}[1]{\Vert #1 \Vert}
\newcommand{\dom}{\mathfrak{D}}
\DeclareMathSymbol{\varGamma}{\mathord}{letters}{"00}
\DeclareMathSymbol{\varDelta}{\mathord}{letters}{"01}
\DeclareMathSymbol{\varTheta}{\mathord}{letters}{"02}
\DeclareMathSymbol{\varLambda}{\mathord}{letters}{"03}
\DeclareMathSymbol{\varXi}{\mathord}{letters}{"04}
\DeclareMathSymbol{\varPi}{\mathord}{letters}{"05}
\DeclareMathSymbol{\varSigma}{\mathord}{letters}{"06}
\DeclareMathSymbol{\varUpsilon}{\mathord}{letters}{"07}
\DeclareMathSymbol{\varPhi}{\mathord}{letters}{"08}
\DeclareMathSymbol{\varPsi}{\mathord}{letters}{"09}
\DeclareMathSymbol{\varOmega}{\mathord}{letters}{"0A}
\theoremstyle{plain}
\newtheorem{lemma}{Lemma}[section]
\newtheorem{theorem}[lemma]{Theorem}
\theoremstyle{definition}
\newtheorem{definition}[lemma]{Definition}
\theoremstyle{remark}
\newtheorem{remark}[lemma]{Remark}
\newcounter{step}
\makeatletter\@addtoreset{equation}{section}
\makeatletter\@addtoreset{lemma}{section}
\def\mod{\mathop{\rm mod}\nolimits}
\renewcommand{\Re}{\mathop{\rm{R\hskip -1pt e}}\nolimits}
\renewcommand{\Im}{\mathop{\rm{I\hskip -1pt m}}\nolimits}
\begin{document}

\title{Spectral stability and instability of solitary waves
of the Dirac equation with concentrated nonlinearity}

\author{
{\sc Nabile Boussa{\"\i}d}
\\
{\it\small Universit\'e Bourgogne Franche-Comt\'e, 25030 Besan\c{c}on CEDEX, France}
\\~\\
{\sc Claudio Cacciapuoti}
\\
{\it\small DiSAT, Sezione di Matematica, Universit\`a dell'Insubria, via Valleggio 11, I-22100 Como, Italy}
\\~\\
{\sc Raffaele Carlone}
\\
{\it\small Dipartimento di Matematica e Applicazioni R. Caccioppoli,  Universit\`a Federico II di Napoli,}
\\
{\it\small MSA, via Cinthia, I-80126, Napoli, Italy}
\\~\\
{\sc Andrew Comech}
\\
{\it\small
Mathematics Department,
Texas A\&M University, College Station, TX 77843, USA}
\\~\\
{\sc Diego Noja}
\\
{\it\small Dipartimento di Matematica e Applicazioni, Universit\`a di Milano Bicocca,}
\\
{\it\small via R. Cozzi 55, I-20126 Milano, Italy}
\\~\\
{\sc Andrea Posilicano}
\\
{\it\small DiSAT, Sezione di Matematica, Universit\`a dell'Insubria, via Valleggio 11, I-22100 Como, Italy}
}

\date{\version}

\maketitle

\begin{abstract}
We consider the nonlinear Dirac equation with Soler-type nonlinearity
concentrated at one point
and present a detailed study of the spectrum of linearization at solitary waves.
We then consider two different perturbations of the nonlinearity which break the $\mathbf{SU}(1,1)$ symmetry:
the first preserving and the second breaking the parity symmetry.
We show that a particular perturbation 
which breaks the $\mathbf{SU}(1,1)$ symmetry
but not the parity symmetry also preserves the spectral stability
of solitary waves.
Then we consider a particular perturbation
which breaks both the $\mathbf{SU}(1,1)$ symmetry
and the parity symmetry and show that this perturbation
destroys the stability of weakly relativistic solitary waves.
This instability is due to the bifurcations of positive-real-part eigenvalues
from the embedded eigenvalues $\pm 2\omega\jj$.
\end{abstract}


\section{Introduction}

In this article, we study a nonlinear Dirac equation (NLD)
in one dimension
with nonlinearity concentrated at a point,
\begin{eqnarray}\label{nld-point-0}
\jj \p_t\psi=D_m\psi
-\delta(x)f(\psi\sp\ast\sigma_3\psi)\sigma_3\psi,
\qquad
\psi(t,x)\in\C^2,
\quad
x\in\R,
\quad
t\in\R,
\end{eqnarray}
and study stability of its solitary wave solutions.
We also consider
how this stability is affected by certain perturbations.
Above, the free Dirac operator
in one spatial dimension
is taken in the form
\[
D_m=\jj\sigma_2\p_x+m\sigma_3
=\begin{bmatrix}m&\p_x\\-\p_x&-m\end{bmatrix},
\qquad
m>0\textup,
\]
where $f$
is a differentiable real-valued function,
while the standard Pauli matrices are given by
\[
\sigma_{1}=\begin {bmatrix}0&1 \\1&0\end {bmatrix},
\qquad 
\sigma_{2}=\begin {bmatrix}0&-\jj \\\jj&0\end {bmatrix},
\qquad
\sigma_{3}=\begin {bmatrix}1&0 \\0&-1\end {bmatrix}.
\]
The name \emph{concentrated nonlinearity} comes
from the presence of the delta distribution
at the
nonlinear term in the equation.

A rigorous definition of the model is given in Section~\ref{sect-model}, while for the accurate general treatment we refer to \cite{cacciapuoti2017one}. In the usual setting (where the Dirac distribution is missing and the nonlinearity is everywhere distributed), the nonlinearity $f(\psi\sp\ast\sigma_3\psi)\sigma_3\psi$ appearing in \eqref{nld-point-0} defines what is known as Soler model (also called Gross--Neveau model in one spatial dimension, $(1+1)$D); by analogy, the above equation describes a Soler-type concentrated nonlinearity.
We mention that the analysis of various PDEs with concentrated nonlinearities is now a well-developed subject.
Rigorous studies have been performed especially, but not only,
in the Schr\"odinger case (see \cite{adami2001class, adami2003cauchy, noja2005wave, carlone2019well} and references therein).
The local
and global
well-posedness of the nonlinear Dirac equation (NLD) with concentrated nonlinearity is given
in the already cited \cite{cacciapuoti2017one}
and the extension to quantum graphs has also been considered
in \cite{borrelli2019nonlinear,Carlone2019extended}.
The local and global well-posedness for the NLS with concentrated nonlinearity
is given in \cite{komech2007global,cacciapuoti2014nls},
starting from extended nonlinearities and taking the point limit on solutions.
A similar derivation and global
well-posedness of the one-dimensional NLD
with concentrated nonlinearity
could be treated along similar lines, although up to now this problem is open.
Our interest in this kind of nonlinearity is raised by the possibility of characterizing explicitly the solitary waves of the model and of giving a fairly complete spectral theory of the linearization around solitary waves.
While in the usual Soler model it seems difficult to have complete and definite results on the spectral stability of solitary waves, in the present example, simplified yet nontrivial, spectral stability and instability of some classes of solitary waves can be established.
(We recall that a solitary wave solution $\phi_\omega(x) e^{-\jj\omega t}$ of the NLD is \emph{spectrally stable} if the spectrum of the linearization operator around the solitary wave has no points
in the right half of the complex plane; in the opposite case we say that the solitary wave is linearly unstable.)

Knowledge of the linearization spectrum and in particular the spectral stability of solitary waves is important because it is a fundamental step towards the analysis of their asymptotic stability.
In previous works on asymptotic stability of solitary waves of NLD
\cite{boussaid2006stable,boussaid2008asymptotic,boussaid2012stability,MR2985264,MR3592683}, their spectral stability was either taken as an assumption,
or checked numerically.
For analytical approaches to the spectral stability in NLD, see \cite{berkolaiko2012spectral,boussaid2016spectral,boussaid2017nonrelativistic,boussaid2018spectral,linear-b,aldunate2021results},
and also the monograph \cite{opus}.
Let us mention a recent related article
on the orbital stability of solitary waves
in the Klein--Gordon equation with concentrated
nonlinearity \cite{comech2021orbital},
with the complete analysis of the spectrum
of the linearized equation.

In Section~\ref{sect-model},
we study the solitary waves (Lemma~\ref{lemma-sw} below)
and then treat the spectrum of the linearized system.
Let us give the essence of our Theorem~\ref{theorem-sect2}
on a particular case of a pure power nonlinearity
$f(\tau)=\abs{\tau}^\kappa$, $\kappa>0$.
Considering solitary waves with frequencies in the gap $\omega\in(-m,m)$,
the spectrum of the linearization is as follows:
there are always eigenvalues $\pm 2\omega\jj$
(embedded into the continuous spectrum when $\abs{\omega}>m/3$);
when
$\kappa\in (0,1]$,
the entire spectrum is located on the imaginary axis;
there are two
simple nonzero eigenvalues when $\kappa\in \big(2^{-1/2},1\big]$
and the frequency satisfies $\omega>\mathcal{T}_\kappa$,
with some $\mathcal{T}_\kappa\in(0,m)$.
For $\kappa>1$, these two imaginary eigenvalues collide
at the origin
when $\omega=\Omega_\kappa$,
with $\Omega_\kappa\in(\mathcal{T}_\kappa,m)$ the second threshold value,
and a couple of nonzero real eigenvalues appear from this collision
when $\omega\in(\Omega_\kappa,m)$.
This second threshold value is the one corresponding to algebraic multiplicity
of the null space of the linearization jumping from two to four.
This value satisfies the Kolokolov condition \cite{kolokolov-1973}:
$\p_\omega \norm{\phi_\omega}_{L^2}^2$ vanishes at $\omega=\Omega_\kappa$.
The statement and proof of these results are in Section~\ref{sect-spectrum}.
The detailed structure of the spectrum
of the linearization at a solitary wave
is formulated in Theorem~\ref{theorem-sect2}
(which is proved in Section~\ref{sect-theorem-sect2-proof}).
A relevant part of the analysis relies on the parity symmetry of the
Soler model,
which allows one to split the Hilbert space into two invariant subspaces:
odd-even-odd-even and even-odd-even-odd subspaces.
In the former subspace live the ``trivial'' eigenvalues $\pm 2\omega\jj$
and in the latter subspace live the possibly further ``nontrivial'' eigenvalues.

The presence of real eigenvalues
in the case $\kappa>1$ rules out spectral stability of the corresponding solitary waves. As explained above, for any positive power $\kappa$ and any $\omega\in (-m,m)$, besides eigenvalue $0$ and possible nontrivial eigenvalues referred above, the point spectrum contains purely imaginary eigenvalues $\pm 2\omega\jj$. These eigenvalues are related to the $\mathbf{SU}(1,1)$ symmetry of the Soler model (see \cite{galindo1977remarkable})
and to the existence of bi-frequency solitary waves
(see \cite{berkolaiko2012spectral,boussaid2018spectral} and Remark~\ref{remark-bi-frequency} in the present paper).
By \cite{linear-b},
the spectral stability 
of small amplitude solitary waves of the Soler model
in dimensions $n\ge 1$
heavily relies
on the presence of $\pm 2\omega\jj$ eigenvalues in the spectrum
of the linearized equation.

If the symmetry responsible of the $\pm 2\omega\jj$ eigenvalues is broken,
then
in principle one could expect that the eigenvalues $\pm 2\omega\jj$
bifurcate off the imaginary axis,
either becoming eigenvalues with nonzero real part,
or turning into resonances,
that is, poles of the resolvent on the unphysical sheet of its Riemann surface;
the second part of the paper is dedicated to the analysis of this issue.
We consider examples of perturbations
of the nonlinearity
which destroy the $\mathbf{SU}(1,1)$ symmetry;
we are interested in the fate of the eigenvalues $\pm 2\omega \jj$
associated to the $\mathbf{SU}(1,1)$ symmetry.
In Section~\ref{sect-nld-perturbation},
we consider a perturbation which preserves the parity
(the self-interaction is based on the quantity
$\psi\sp\ast(\sigma_3+\epsilon I_2)\psi$, $\epsilon\ne 0$,
instead of $\psi\sp\ast\sigma_3\psi$)
and show that solitary waves remain spectrally stable (if they were stable in the Soler model).
We say that this class of perturbations preserves the parity in the sense that the
operator corresponding to the linearization at a solitary wave is invariant
in odd-even-odd-even and even-odd-even-odd subspaces.

In Section~\ref{sect-nld-perturbation-broken-parity}
we consider a perturbation
when
the self-interaction is based on the quantity $\psi\sp\ast(\sigma_3+\epsilon\sigma_1)\psi$, $\epsilon\ne 0$,
instead of $\psi\sp\ast\sigma_3\psi$.
This perturbation breaks not only $\mathbf{SU}(1,1)$ symmetry,
but also the parity symmetry (in the above sense).
We show that such a perturbation leads to linear instability
of weakly relativistic solitary waves
(when $\omega<m$ is close enough to $m$).
We point out that in the model under consideration
the $\pm 2\omega\jj$ eigenvalues of the linearized operator,
the ones which are due to the $\mathbf{SU}(1,1)$ symmetry of the model, are simple (in the sense
that they correspond to a one-dimensional eigenspace). Due to the symmetries of the spectrum with respect to the real and imaginary axes,
these two eigenvalues could not bifurcate off the imaginary axis if they were isolated
(this is the case when $\abs{\omega}<m/3$).
The linear instability that we prove in the nonrelativistic regime ($\omega$ is close to $m$)
is only possible since
in the unperturbed case
these two eigenvalues are embedded into the essential spectrum:
under the perturbation,
an eigenvalue corresponding to a one-dimensional eigenspace
can bifurcate to both sides of the imaginary axis.

Let us make one more comment.
The $\mathbf{SU}(1,1)$ symmetry is absent
for the physically relevant Dirac--Maxwell system
(with the nonlinear Dirac equation being its effective reduction).
We presently do not know
whether solitary waves in the Dirac--Maxwell system are spectrally stable.
This question was one of the motivations
for the study of the relation of
the broken $\mathbf{SU}(1,1)$ symmetry and spectral stability
undertaken in the present article.



\section{The Soler model with concentrated nonlinearity}
\label{sect-model}

We are looking for solitary wave solutions
$\psi(t,x)=\phi(x)e^{-\jj\omega t}$
to the Dirac equation with nonlinear self-interaction
of Soler type
which is concentrated at the origin.
This reads formally as
\begin{eqnarray}\label{nld-point}
\jj \p_t\psi=D_m\psi
-\delta(x)f(\psi\sp\ast\sigma_3\psi)\sigma_3\psi,
\qquad
\psi(t,x)\in\C^2,
\quad
x\in\R,
\quad
t\in\R,
\end{eqnarray}
with
\begin{eqnarray}\label{def-dm}
D_m=\jj\sigma_2\p_x+\sigma_3 m
=
\begin{bmatrix}m&\p_x
\\-\p_x&-m
\end{bmatrix}
\end{eqnarray}
and with the nonlinearity represented by
\begin{eqnarray}\label{f-such}
f\in C(\R)\cap C^1(\R\setminus\{0\}),
\qquad
f(0)=0.
\end{eqnarray}
Formally, the model \eqref{nld-point} corresponds to the Lagrangian density
\begin{eqnarray}\label{Lagrangian-density}
\psi^*\sigma_3(\jj\p_t-D_m)\psi+\delta(x)F(\psi^*\sigma_3\psi),
\qquad
F(s):=\int_0^s f(\tau)\,d\tau.
\end{eqnarray}
Let us give a formalized version of \eqref{nld-point}.
Denote
$\R_{-}=(-\infty,0)$,
$\R_{+}=(0,\infty)$,
and let
$H_{-}$ and $H_{+}$ be the free Dirac  operators on
$L^{2}(\R_{-})\otimes\C^2$ and $L^{2}(\R_{+})\otimes\C^2$,
formally given by $D_m$,
with domains
\[
\dom(H_{-})=H^{1}(\R_{-})\otimes\C^{2},
\qquad
\dom(H_{+})=H^{1}(\R_{+})\otimes\C^{2}.
\]
Denoting by $H_{\circ}$ the restriction of $D_m$ onto the domain
$\dom(H_{\circ}):=\{\psi\in H^{1}(\R,\C):\psi(0)=0\}$,
one has that $H_{\circ}$ is closed, symmetric, has defect indices $(2,2)$,
and  adjoint $H_{\circ}^{*}=H_{-}\oplus H_{+}$.
The family of selfadjoint extensions $H^{\mathrm{lin}}$ of the operator $H_{\circ}$
has been studied for a long time
\cite{gesztesy1987new,benvegnu1994relativistic,albeverio2005solvable};
we recall that the family is parametrized by the set of hermitian matrices $M$
and that any selfadjoint operator $H^{\mathrm{lin}}$ has the domain
\begin{equation}\label{domH_linear}
\dom(H^{\mathrm{lin}})  = 
\big\{
\psi \in L^2(\R,\C)\otimes \C^2:\psi \in H^1(\R\setminus\{0\}) \otimes \C^{2},\,
\jj\sigma_{2}[\psi ]_{0}-M\hat\psi=0 
\big\},
\end{equation}
where the two-component vector
\begin{eqnarray}\label{psi-hat}
\hat\psi:=(\psi(0^+)+\psi(0^-))/2
\end{eqnarray}
is the ``mean value'' of the spinor $\psi$ at $x=0$
and
\begin{eqnarray}\label{psi-zero}
[\psi]_{0}:=\psi(0^+)-\psi(0^-)
\end{eqnarray}
is the jump of the spinor $\psi$ at $x=0$.
We define a Dirac operator $H^{\mathrm{nl}}_{f}$ with concentrated nonlinearity
so that the coupling between the jump and the mean value of the spinor function
is given by a nonlinear relation (self-interaction); see~\cite{cacciapuoti2017one}.
To this aim we define the nonlinear domain 
\begin{equation}\label{domH}
\dom(H^{\mathrm{nl}}_{f})  = 
\big\{
\psi \in L^2(\R,\C)\otimes \C^2:\psi \in H^1(\R\setminus\{0\}) \otimes \C^{2},\,
\jj\sigma_{2}[\psi ]_{0}-f(\hat\psi\sp\ast\sigma_3\hat\psi)\sigma_3\hat\psi=0
\big\}.
\end{equation}
The operator
$H^{\mathrm{nl}}_{f}$ is then defined as the restriction
of $H^{*}_{\circ}=H_{-}\oplus H_{+}$ to
the domain $\dom(H^{\mathrm{nl}}_{f})$.
Thus, the Hamiltonian system
$\jj\p_t\psi=H^{\mathrm{nl}}_{f}\psi$,
with $\psi(t)\in \dom(H^{\mathrm{nl}}_{f})$,
is a formalized version of the Soler model with point interaction \eqref{nld-point}.
We will refer to the boundary condition defining the operator domain
$\dom(H^{\mathrm{nl}}_{f})$ from \eqref{domH}
as to the \emph{jump condition},
rewriting it in the form
\begin{equation}\label{jumpeq}
[\psi ]_{0}=f(\hat\psi\sp\ast\sigma_3\hat\psi)\sigma_1\hat\psi.
\end{equation}

Let us mention that the approach
similar to the one in the present article
would be applicable to
studying spectral stability of solitary waves
in the Soler model in one spatial dimension
with nonlinearity concentrated at several points
$x_1<x_2<\dots<x_N$.
At the same time, one expects that
if the nonlinearity is concentrated at several points,
then, besides usual
one-frequency and
bi-frequency solitary waves
(see Remark~\ref{remark-bi-frequency} below),
there could be nontrivial multifrequency solitary waves,
just like in the case of the nonlinear Klein--Gordon equation
with nonlinearity concentrated at several points;
see e.g. the example
constructed in \cite[Proposition~8.1]{MR2579377}.

\subsection{Solitary waves}
\label{sect-sw}

Below, we will use the following notations
for $\omega\in(-m,m)$:
\begin{eqnarray}\label{def-def}
\varkappa(\omega)=\sqrt{m^2-\omega^2},
\qquad
\mu(\omega)=\sqrt{\frac{m-\omega}{m+\omega}},
\qquad
\omega\in(-m,m).
\end{eqnarray}

First
let us describe all
solitary waves to \eqref{nld-point},
which are defined as solutions of the form
\begin{eqnarray}\label{s-w}
\psi(t,x)=\phi(x)e^{-\jj\omega t},
\qquad
\phi\in
\dom(H^{\mathrm{nl}}_{f}),
\quad
\omega\in\R,
\end{eqnarray}
where $\dom(H^{\mathrm{nl}}_{f})$ is defined in \eqref{domH}.

\begin{lemma}\label{lemma-sw}
\begin{enumerate}
\item
\label{lemma-sw-1}
There are no nonzero solitary waves
with $\omega\in\R\setminus(-m,m)$.
\item
\label{lemma-sw-2}
For
$\omega\in(-m,m)\setminus\{0\}$,
there are two types of solitary waves: the even-odd one,
\begin{eqnarray}\label{s-w-1}
\psi(t,x)
=
\begin{bmatrix}\alpha\\\alpha\mu(\omega) \sgn x\end{bmatrix}
e^{-\varkappa(\omega)\abs{x}}e^{-\jj\omega t},
\end{eqnarray}
where $\alpha\in\C$ satisfies the relation
\begin{equation}\label{Eq:ConditionAlpha}
f(\abs{\alpha}^2)=2\mu(\omega);
\end{equation}
and the odd-even one,
\begin{eqnarray}\label{s-w-2}
\psi(t,x)
=
\begin{bmatrix}\frac{\beta}{\mu(\omega)}\sgn x\\\beta\end{bmatrix}
e^{-\varkappa(\omega)\abs{x}}e^{-\jj\omega t},
\end{eqnarray}
where $\beta\in\C$
satisfies the relation
$f(-\abs{\beta}^2)=2/\mu(\omega)$.
\item
\label{lemma-sw-3}
For $\omega=0$, there are solitary waves of the form
\begin{eqnarray}\label{s-w-3}
\psi(x)
=
\begin{bmatrix}
\alpha+\beta\sgn x\\
\beta+\alpha\sgn x
\end{bmatrix}e^{-m\abs{x}},
\end{eqnarray}
with $\alpha,\,\beta\in\C$ satisfying the relation
$f(\abs{\alpha}^2-\abs{\beta}^2)=2$.
\end{enumerate}
\end{lemma}

Thus, by Lemma~\ref{lemma-sw},
there are nonzero solitary wave solutions
if and only if $\range(f)\cap\R_{+}\ne\emptyset$.

\begin{proof}
The amplitude $\phi(x)$
of a solitary wave $\phi(x)e^{-\jj\omega t}$ is to satisfy,
formally,
\[
(D_m-\omega I_2-\delta(x)f\sigma_3)\phi=0,
\]
where $f=f(\hat\phi^*\sigma_3\hat\phi)$.
On $\R\setminus\{0\}$, one has:
\begin{eqnarray}\label{m-o}
\jj\sigma_2\p_x\phi+m\sigma_3\phi-\omega\phi=0,
\qquad
\mbox{hence}\quad
\begin{cases}
\p_x\phi_2+(m-\omega)\phi_1=0,
\\
-\p_x\phi_1-(m+\omega)\phi_2=0,
\end{cases}
\qquad
x\in\R\setminus\{0\},
\end{eqnarray}
leading to
$(m^2-\omega^2-\p_x^2)\phi_1(x)=0$,
$(m^2-\omega^2-\p_x^2)\phi_2(x)=0$,
$x\ne 0$.
Thus, for $x\in\R_\pm$,
the amplitude $\phi(x)$
is given by $\phi\sb\pm(x)=\bm{v}\sb\pm e^{-\varkappa\sb\pm\abs{x}}$,
$\bm{v}\sb\pm\in\C^2$ and $\varkappa\sb\pm\in \R^\pm$,
which we write as
\begin{eqnarray}\label{phi-p-m}
\phi\sb\pm(x)=\begin{bmatrix}\alpha+b\sgn x\\\beta+a\sgn x\end{bmatrix}
e^{-\varkappa\sb\pm\abs{x}},
\qquad
x\in\mathbb{R}_\pm,
\qquad
\alpha,\,\beta,\,a,\,b\in\C.
\end{eqnarray}
Substituting \eqref{phi-p-m} into \eqref{m-o}
leads to the relations
(corresponding to $x>0$ and $x<0$)
\begin{eqnarray}\label{two-two}
\begin{bmatrix}
m-\omega&-\varkappa_{+}\\\varkappa_{+}&-m-\omega
\end{bmatrix}
\begin{bmatrix}\alpha+b\\a+\beta\end{bmatrix}=0,
\qquad
\begin{bmatrix}
m-\omega&-\varkappa_{-}\\\varkappa_{-}&-m-\omega
\end{bmatrix}
\begin{bmatrix}\alpha-b\\a-\beta\end{bmatrix}=0,
\end{eqnarray}
hence $\varkappa\sb\pm^2=m^2-\omega^2$;
we see that one needs to take
$\varkappa_\pm=\varkappa(\omega)=\sqrt{m^2-\omega^2}>0$,
and that one also needs to assume that
$\omega\in(-m,m)$ (or else the $L^2$-norm of $\phi$ is infinite
unless $\phi=0$).
Taking into account that, as the matter of fact, both matrices
in \eqref{two-two}
coincide, we arrive at
\[
\begin{bmatrix}
m-\omega&-\varkappa(\omega)\\\varkappa(\omega)&-m-\omega
\end{bmatrix}
\begin{bmatrix}\alpha\\a\end{bmatrix}=0,
\qquad
\begin{bmatrix}
m-\omega&-\varkappa(\omega)\\\varkappa(\omega)&-m-\omega
\end{bmatrix}
\begin{bmatrix}b\\\beta\end{bmatrix}=0.
\]
Hence,
$a=\frac{\varkappa(\omega)}{m+\omega}\alpha=\mu(\omega)\alpha$,
$\beta=\frac{\varkappa(\omega)}{m+\omega}b=\mu(\omega)b$,
with $\mu(\omega)$ from \eqref{def-def},
and now
\eqref{phi-p-m} takes the form
\begin{eqnarray}\label{phi-a-b}
\phi(x)=\begin{bmatrix}
\alpha+\frac{\beta}{\mu(\omega)}\sgn x
\\
\beta+\alpha\mu(\omega)\sgn x
\end{bmatrix}e^{-\varkappa(\omega)\abs{x}},
\qquad
x\in\R.
\end{eqnarray}
The jump condition
\eqref{jumpeq}
with $[\phi]_0=2\begin{bmatrix}{\beta}/{\mu(\omega)}\\\alpha\mu(\omega)\end{bmatrix}$
and
$\hat\phi=\begin{bmatrix}\alpha\\\beta\end{bmatrix}$
coming from \eqref{phi-a-b}
takes the form
\begin{eqnarray}\label{str}
2\begin{bmatrix}
{\beta}/{\mu(\omega)}\\\alpha\mu(\omega)
\end{bmatrix}
=f
\begin{bmatrix}
\beta\\\alpha
\end{bmatrix},
\end{eqnarray}
with
$f=f(\tau)$ evaluated at
\begin{eqnarray}\label{at-tau}
\tau:=\hat\phi\sp\ast\sigma_3\hat\phi=\abs{\alpha}^2-\abs{\beta}^2.
\end{eqnarray}
We conclude from \eqref{str} that
if $\mu\ne 1$ (that is, $\omega\ne 0$),
then either
$\beta=0$ and $2\mu(\omega)=f(\abs{\alpha}^2)$,
or $\alpha=0$ and $2/\mu(\omega)=f(-\abs{\beta}^2)$.
These two cases correspond to solutions
\eqref{s-w-1} and \eqref{s-w-2},
respectively.

If $\mu(\omega)=1$ (that is, $\omega=0$), then \eqref{str}
will be satisfied if and only if
$\alpha,\,\beta\in\C$ satisfy
$2=f(\abs{\alpha}^2-\abs{\beta}^2)$;
this corresponds to the solution \eqref{s-w-3}.
\end{proof}


\begin{remark}
If $\omega=m$ and $\omega=-m$,
then, besides solutions
of the form \eqref{phi-p-m},
equation \eqref{m-o}
has solutions
$\phi(x)=\begin{bmatrix}2m x\\-1\end{bmatrix}$
and
$\phi(x)=\begin{bmatrix}-1\\2m x\end{bmatrix}$,
respectively;
we do not consider them
since they do not belong to $L^2(\R)$.
\end{remark}

\begin{remark}\label{remark-bi-frequency}
Just like the standard Soler model
\cite{PhysRevD.1.2766},
equation \eqref{nld-point} has the
$\mathbf{SU}(1,1)$ symmetry:
if $\psi(t,x)$ is a solution, then so is
\[
(A+B\sigma_1\bm{K})\psi(t,x),
\]
where 
$A,\,B\in\C$ satisfy $\abs{A}^2-\abs{B}^2=1$
and $\bm{K}:\,\C^2\to\C^2$ is the complex conjugation.
In particular,
if $\phi(x) e^{-\jj\omega t}$
is a solitary wave solution to \eqref{nld-point}, then
there is also a bi-frequency solitary wave
\begin{eqnarray}\label{bi-frequency-waves}
A\phi(x) e^{-\jj\omega t}
+B\phi\sp{C}(x) e^{\jj\omega t}
\qquad
A,\,B\in\C,
\qquad
\abs{A}^2-\abs{B}^2=1,
\end{eqnarray}
with
$
\phi\sp{C}(x):=\sigma_1\bm{K}\phi(x)
$.
For more details on bi-frequency solitary waves,
see \cite{opus}.
\end{remark}

\begin{remark}\label{remark-omega-zero}
We note that the $\omega=0$ solitary waves
of the form \eqref{s-w-3},
with $\alpha,\,\beta\in\C$ satisfying $f(\abs{\alpha}^2-\abs{\beta}^2)=2$,
with $\abs{\alpha}>\abs{\beta}$,
can be written as
$(A+B\sigma_1\bm{K})
\begin{bmatrix}\alpha_0\\\alpha_0\sgn x\end{bmatrix}e^{-m\abs{x}}$,
with
$A=\alpha/\sqrt{\abs{\alpha}^2-\abs{\beta}^2}$,
$B=\beta/\sqrt{\abs{\alpha}^2-\abs{\beta}^2}$,
and $\alpha_0=\sqrt{\abs{\alpha}^2-\abs{\beta}^2}$
satisfying
$\abs{A}^2-\abs{B}^2=1$
and $f(\abs{\alpha_0}^2)=2$,
hence their stability properties
(both linear and nonlinear)
follow from the corresponding stability properties of
the solitary wave
$\begin{bmatrix}\alpha_0\\\alpha_0\sgn x\end{bmatrix}e^{-m\abs{x}}$
(that is, \eqref{s-w-1} with $\omega=0$).
Similarly, the stability of solitary waves
of the form \eqref{s-w-3}
in the case $\abs{\alpha}<\abs{\beta}$
can be reduced to the stability properties
of a solitary wave \eqref{s-w-2}
with $\omega=0$.
Note that
there are no solitary waves of the form
\eqref{s-w-3} with $\abs{\alpha}=\abs{\beta}$
since $f(0)=0$
(see \eqref{f-such})
while
for solitary wave \eqref{s-w-3}
one needs $f(\abs{\alpha}^2-\abs{\beta}^2)=2$.
\end{remark}

\begin{remark}\label{remark-even}
One can see from  \eqref{nld-point}
that if 
$\psi(t,x)$ is its solution,
then
$\psi\sp{C}(t,x)
=\sigma_1\bm{K}\psi(t,x)$
is a solution to \eqref{nld-point}
with the nonlinearity represented by
the function $\tilde f(\tau)=f(-\tau)$, $\tau\in\R$,
instead of $f$.
Consequently, if
\begin{eqnarray}\label{psi-1}
\psi(t,x)=\begin{bmatrix}
\frac{\beta}{\mu(\omega)}\sgn x\\\beta
\end{bmatrix}
e^{-\varkappa(\omega)\abs{x}}e^{-\jj\omega t},
\qquad
\beta\in\C,
\quad
f(-\abs{\beta}^2)=2/\mu(\omega),
\end{eqnarray}
is a solitary wave solution to
\eqref{nld-point},
then
\begin{eqnarray}\label{psi-1-c}
\psi\sp{C}(t,x)
=\sigma_1\bm{K}\psi(t,x)
=\begin{bmatrix}
\beta\\ \beta\mu(-\omega)\sgn x
\end{bmatrix}
e^{-\varkappa(-\omega)\abs{x}}e^{\jj\omega t},
\qquad
\beta\in\C,
\quad
\tilde f(\abs{\beta}^2)=2/\mu(\omega)=2\mu(-\omega),
\end{eqnarray}
is a solitary wave solution to
\eqref{nld-point}
with $\tilde f(\tau)=f(-\tau)$, $\tau\in\R$,
corresponding to the frequency $-\omega$,
and
the solitary waves \eqref{psi-1} and \eqref{psi-1-c}
have the same stability properties.
In particular, in the case when
the nonlinearity is represented by
the function $f(\tau)$ which is even,
if $\phi(x)e^{-\jj\omega t}$
from \eqref{s-w}
is a solitary wave solution, then so is
$\phi\sp{C}(x)e^{\jj\omega t}$,
with the same stability properties.
Therefore, it is enough to study properties
of solitary waves of the form
\eqref{s-w-1}.
\end{remark}

\subsection{Spectrum of the linearization operator}
\label{sect-spectrum}

In the present work,
we focus on stability of solitary waves
of the form \eqref{s-w-1}:
\begin{eqnarray}\label{solitary-wave}
\psi(t,x)=\phi_\omega(x)e^{-\jj\omega t},
\quad
\mbox{with}
\quad
\phi_\omega(x)=
\begin{bmatrix}\alpha\\\alpha\mu(\omega)\sgn x\end{bmatrix}e^{-\varkappa(\omega)\abs{x}},
\end{eqnarray}
with
$\varkappa(\omega)=\sqrt{m^2-\omega^2}$,
$\mu(\omega)=\sqrt{(m-\omega)/(m+\omega)}$
(see \eqref{def-def})
and with $\alpha\ne 0$ satisfying the
relation
\begin{eqnarray}\label{a-b-f}
f(\abs{\alpha}^2)=2\mu(\omega).
\end{eqnarray}
From now on, we assume that
\[
\alpha>0;
\]
due to
$\mathbf{U}(1)$-invariance of equation \eqref{nld-point},
there is no loss of generality in this assumption.

The spectral stability of solitary waves of the form
\eqref{s-w-2}
is obtained in the same way
(one can use Remark~\ref{remark-even});
for definiteness, we restrict our attention to the solitary
waves of the form \eqref{s-w-1}.

The spectral stability of solitary waves of the form
\eqref{s-w-3}
follows from Remark~\ref{remark-bi-frequency}.

For future use, let us mention that
for a family of solitary waves with $\omega$ from an interval of $(-m,m)$
the relation \eqref{a-b-f} allows us to consider $\alpha>0$
locally as a function of $\omega$ and
to compute $\p_\omega\alpha(\omega)$ when $f$ is $C^1$
and its derivative does not vanish
at a particular value of $\alpha^2$:
\begin{eqnarray}\label{p-alpha}
f'(\alpha^2)\alpha\p_\omega\alpha
=\p_\omega\mu(\omega)
=-\frac{m}{(m+\omega)\varkappa(\omega)}.
\end{eqnarray}


Let us consider the spectrum of the operator
corresponding to the linearization at
the solitary wave $\phi_\omega(x)e^{-\jj\omega t}$
from \eqref{solitary-wave}.
We use the Ansatz
\begin{eqnarray}\label{Ansatz-br-0}
\psi(t,x)=(\phi_\omega(x)+r(t,x)+\jj s(t,x))e^{-\jj\omega t},
\qquad
\big(r(t,x),\,s(t,x)\big)\in \R^2\times\R^2.
\end{eqnarray}
A substitution of the Ansatz
\eqref{Ansatz-br-0}
into equation \eqref{nld-point} shows that the perturbation
$(r(t,x),\,s(t,x))$
satisfies in the first order the following system:
\begin{eqnarray}\label{l-p-l-m}
\begin{cases}
-\dot s=(D_m-\omega) r
-\delta(x)f\sigma_3\, r
-\delta(x)(\phi_\omega\sp\ast\sigma_3\, r)2g\sigma_3\,\phi_\omega
,
\\[1ex]
\dot r=(D_m -\omega) s
-\delta(x)f\sigma_3\, s
.
\end{cases}
\end{eqnarray}
Above,
$D_m$ is from \eqref{def-dm}
and $f,\,g\in\R$ are given by
\[
f=f(\alpha^2),
\qquad
g=f'(\alpha^2).
\]
In \eqref{l-p-l-m},
we abuse notation considering $\delta$
as an operator acting on $L^2(\R)$.
We refer to Remark~\ref{Rem:RigorousNotation} below
for a more rigorous formulation.
We define
\begin{eqnarray}\label{def-kappa}
\kappa=\alpha^2 f'(\alpha^2)/f(\alpha^2)\in\R.
\end{eqnarray}
By \eqref{Eq:ConditionAlpha}, $f(\alpha^2)=2\mu>0$.
Note that the definition \eqref{def-kappa} is compatible
with the pure power case,
\begin{eqnarray}\label{pure-power}
f(\tau)=\abs{\tau}^\kappa,
\qquad
\tau\in\R,
\qquad
\kappa>0.
\end{eqnarray}
Combining \eqref{a-b-f}, \eqref{p-alpha},
and the definition \eqref{def-kappa},
we can express
\begin{eqnarray}\label{p-alpha-kappa}
2\kappa\mu\p_\omega\alpha=\alpha\p_\omega\mu.
\end{eqnarray}

\begin{remark}
Nonlinearities giving rise to $\kappa<0$ are not covered by the well-posedness results (\cite{cacciapuoti2017one}); in this section for completeness we give the analysis of the linearization operator for any value of $\kappa$,
which makes our results applicable not only to the pure power case
\eqref{pure-power}
but also to a generic nonlinearity
$f\in C(\R)\cap C^1(\R\setminus\{0\})$, $f(0)=0$.
\end{remark}

Using the relation \eqref{a-b-f} and the definition \eqref{def-kappa},
we simplify the system \eqref{l-p-l-m} to
\begin{eqnarray}\label{l-p-l-m-1}
\begin{cases}
-\dot s=\big(D_m-\omega I_2
-2\mu\delta(x)\sigma_3
-4\mu\kappa\delta(x)\Pi_1\big) r
=:L_{+}r,
\\[1ex]
\dot r=\big(D_m -\omega I_2
-2\mu\delta(x)\sigma_3\big) s
=:L_{-}s,
\end{cases}
\end{eqnarray}
with $I_2$ the identity matrix in $\C^2$
and with $\Pi_i$, $i=1,\,2$, defined by
\begin{eqnarray}\label{def-pi1-pi2}
\Pi_1=\begin{bmatrix}1&0\\0&0\end{bmatrix},
\qquad
\Pi_2=\begin{bmatrix}0&0\\0&1\end{bmatrix}.
\end{eqnarray}
In the matrix form,
the linearized system
\eqref{l-p-l-m-1}
can be written as
\begin{eqnarray}\label{nld-point-linearization}
\p_t
\begin{bmatrix}r(t,x)\\s(t,x)\end{bmatrix}
=\bfA
\begin{bmatrix}r(t,x)\\s(t,x)\end{bmatrix},
\qquad
\bfA=\begin{bmatrix}
0&L_{-}\\-L_{+}&0
\end{bmatrix},
\end{eqnarray}
where the operator $\bfA$
is given explicitly by
\begin{eqnarray}\label{def-a-ch2}
\bfA
=
\begin{bmatrix}
0&D_m-\omega I_2-2\mu(\omega)\delta(x)\sigma_3
\\-D_m+\omega I_2
+2\mu(\omega)\delta(x)\sigma_3
+4\mu(\omega)\kappa\delta(x)\Pi_1
&0
\end{bmatrix}.
\end{eqnarray}
We consider $\bfA$ as an operator-valued function
\[
\bfA=\bfA(\omega,\kappa),
\qquad
\omega\in(-m,m),
\quad
\kappa\in\R.
\]

\begin{remark}\label{Rem:RigorousNotation}
More precisely, one
considers $L_\pm$ as singular perturbations of the Dirac operator acting as $D_m-\omega I_2$
on $H^1(\R\setminus\{0\},\C^2)$
and with domains
\begin{equation}
\label{Eq:DomainOfL_+}
\dom(L_+)=
\left\{
r\in
H^1(\R\setminus\{0\},\C^2):
\,
\jj\sigma_2[r]_0-2\mu(\sigma_3+2\kappa\Pi_1)\hat r=0
\right\},
\end{equation}
with
$\hat r=\big(r(0^{+})+r(0^{-})\big)/2$,
$[r]_0=r(0^{+})- r(0^{-})$,
and
\begin{equation}
\label{Eq:DomainOfL_-}
\dom(L_{-})=
\left\{
s\in
H^1(\R\setminus\{0\},\C^2):
\,
\jj\sigma_2[s]_0
-2\mu\sigma_3\hat s=0
\right\},
\end{equation}
with
$\hat s=\big(s(0^{+})+s(0^{-})\big)/2$,
$[s]_0=s(0^{+})- s(0^{-})$.
The boundary conditions in \eqref{Eq:DomainOfL_+}  and \eqref{Eq:DomainOfL_-} belong to the class assuring selfadjointess of respective operators $L_+$ and $L_-$ (see equation \eqref{domH_linear}).
Correspondingly, one needs to consider $\bfA$ as the operator
acting as
$
 \begin{bmatrix}
0&D_m-\omega I_2
\\-D_m+\omega I_2
&0
\end{bmatrix}
$
on $H^1(\R\setminus\{0\},\C^2\times\C^2)$
and with domain
\begin{equation}
\label{Eq:DomainOfA}
\dom(\bfA)=
\left\{
(r,s)\in
H^1(\R\setminus\{0\},\C^2\times\C^2):
\ \jj\sigma_2[r]_0
-2\mu(\sigma_3+2\kappa\Pi_1)\hat r=0,
\ \,\,\jj\sigma_2 [s]_0-2\mu\sigma_3\hat s=0  \right\},
\end{equation}
with
$\hat r$, $[r]_0$, $\hat s$, and $[s]_0$ as before.
\end{remark}


Before we formulate the results,
let us mention that a \emph{virtual level}
can be defined as a limit point of an eigenvalue family (dependent on a parameter)
when this limit point
belongs to the essential spectrum
but not necessarily to the point spectrum
(that is, it does not necessarily correspond
to a square-integrable eigenfunction).
The virtual levels usually occur
at thresholds of the essential spectrum
(the endpoints of the essential spectrum or
the points where the continuous spectrum
changes its multiplicity),
when they are referred to as \emph{threshold resonances}.
For more on the phenomenon of virtual levels,
see e.g. \cite{MR544248,MR1841744,MR2598115,erdogan2019dispersive}.
The general theory
of virtual levels of operators in Banach spaces
is developed in \cite{virtual-levels,virtual-levels-review}.

\medskip

We start with the spectra of $L_\pm$.
We consider the closed densely defined operator
\begin{eqnarray}\label{l-kappa}
L(\omega,\kappa)
=
D_m-\omega I_2-2\mu\delta(x)\sigma_3-4\mu\kappa\delta(x)\Pi_1,
\qquad
\dom{(L(\omega,\kappa))}=\dom{(L_+)},
\end{eqnarray}
with $\dom{(L(\omega,\kappa))}=\dom{(L_+)}$ given by
\eqref{Eq:DomainOfL_+};
we note that one has
$L_{-}=L(\omega,0)$,
$L_{+}=L(\omega,\kappa)$.
\\
Denote
\begin{eqnarray}\label{def-x-e-o-o-e}
\begin{array}{l}
\bfX_{\mbox{\rm\footnotesize even-odd}}
=L^2_{\mbox{\rm\footnotesize even}}(\R,\C)
\times L^2_{\mbox{\rm\footnotesize odd}}(\R,\C)\subset L^2(\R,\C^2),
\\[2ex]
\bfX_{\mbox{\rm\footnotesize odd-even}}
=L^2_{\mbox{\rm\footnotesize odd}}(\R,\C)
\times L^2_{\mbox{\rm\footnotesize even}}(\R,\C)\subset L^2(\R,\C^2),
\end{array}
\end{eqnarray}
with
$L^2_{\mbox{\rm\footnotesize odd}}$
and
$L^2_{\mbox{\rm\footnotesize even}}$
the subspaces of $L^2$ consisting of odd and even functions of $x\in\R$,
respectively.
We note that
$L^2(\R,\C^2)
=
\bfX_{\mbox{\rm\footnotesize odd-even}}\oplus
\bfX_{\mbox{\rm\footnotesize even-odd}}$
and also that
$\bfX_{\mbox{\rm\footnotesize odd-even}}$
and $\bfX_{\mbox{\rm\footnotesize even-odd}}$
are invariant subspaces for the operator $L(\omega,\kappa)$,
so it suffices to study the spectra of the restrictions of $L(\omega,\kappa)$ onto these subspaces.


\begin{remark}
\label{remark-inv}
Let us provide some detail
why 
$\bfX_{\mbox{\rm\footnotesize even-odd}}$ and
$\bfX_{\mbox{\rm\footnotesize odd-even}}$
are invariant subspaces of $L(\omega,\kappa)$.
Let us consider the parity operator
\[
\calP:\;L^2(\R,\C^2)\to L^2(\R,\C^2),
\qquad
\calP\Psi(x)=\sigma_3\Psi(-x).
\]
The operator $\calP$ commutes with $D_m-\omega I_2$ on its domain
$\dom(D_m-\omega I_2)=H^1(\R,\C^2)$.
It also commutes with its singular perturbation given by
$L=L(\omega,\kappa)$ from \eqref{l-kappa},
or, equivalently, $L\calP=\calP L$.
The identity clearly holds when localized to $\R_-\cup\R_+$,
when considering the action of the operator outside of the origin.
Proving the identity on the full operator domain amounts
to proving that $\calP(\dom(L))\subset \dom(L)$;
we claim that this holds true.
Indeed, let $\Psi\in \dom(L)$;
thus,
we assume that $\Psi\in H^1(\R\setminus\{0\},\C^2)$ and that
$\jj\sigma_2[\Psi]_0-2\mu(\sigma_3+2\kappa\Pi_1)\hat\Psi=0$.
For $\calP\Psi$,
taking into account that
$\left[\calP\Psi \right]_0=-\sigma_3\left[\Psi\right]_0$
and
$\widehat{\calP\Psi}=\sigma_3\hat{\Psi}$
(in the notations from \eqref{psi-hat} and \eqref{psi-zero}),
one derives:
\[
\jj\sigma_2\left[\calP\Psi\right]_0-2\mu(\sigma_3+2\kappa\Pi_1)
\widehat{\calP\Psi}
=\sigma_3\big(\jj\sigma_2\left[\Psi\right]_0-2\mu(\sigma_3+2\kappa\Pi_1)\hat\Psi\big)=0,\]
that is, $\calP\Psi\in \dom(L)$.
(We took into account the relations
$
\sigma_2\sigma_3+\sigma_3\sigma_2=0$ and
$
\Pi_1\sigma_3-\sigma_3\Pi_1=0$).
It follows that
$L$ is invariant in the eigenspaces
corresponding to eigenvalues $\pm 1$ of the operator $\calP$.
These eigenspaces are given by
\[
\bfX_\pm=2^{-1}(I\pm\calP)L^2(\R,\C^2),
\]
so that
$\bfX_+=\bfX_{\mbox{\rm\footnotesize even-odd}}$
and
$\bfX_-=\bfX_{\mbox{\rm\footnotesize odd-even}}$;
the domains of $L$ restricted to these subspaces are given by
\[
\dom(L\at{\bfX_\pm})
=\{
\Psi\in 2^{-1}(I\pm\calP)H^1(\R\setminus\{0\},\C^2):\,
\jj\sigma_2[\Psi]_0-2\mu(\sigma_3+2\kappa\Pi_1)\hat\Psi=0
\}
=\dom(L)\cap\bfX_\pm.
\]
\end{remark}

\begin{lemma}\label{lemma-l}
Let $\omega\in(-m,m)$, $\kappa\in\R$.
\begin{enumerate}
\item
\label{lemma-l-1}
$\sigma_{\mathrm{ess}}\big(L(\omega,\kappa)\big)
=\R\setminus(-m-\omega,m-\omega)$,
$
\ \sigma_{\mathrm{ess}}\big(L(\omega,\kappa)\big)
\cap
\sigma_{\mathrm{p}}\big(L(\omega,\kappa)\big)=\emptyset$\textup;
\item
\label{lemma-l-2}
$
\sigma_{\mathrm{p}}\big(
L(\omega,\kappa)\at{\bfX_{\mbox{\rm\tiny{odd-even}}}}
\big)
=\{-2\omega\},
$
and eigenvalue $\lambda=-2\omega$ is
of geometric multiplicity one\textup;
\item
\label{lemma-l-3}
If $\kappa>-1/2$, then
$\sigma_{\mathrm{p}}\big(
L(\omega,\kappa)\at{\bfX_{\mbox{\rm\tiny{even-odd}}}}
\big)
=
\big\{Z(\omega,\kappa)\big\},
$
where the eigenvalue
\[
Z(\omega,\kappa)
=
-4(m-\omega)
\frac{\kappa(\kappa+1)}{1+(1+2\kappa)^2\mu^2}\in(-m-\omega,m-\omega)
\]
is of geometric multiplicity one.
If
$\kappa\leq- 1/2$,
then
$\sigma_{\mathrm{p}}\big(
L(\omega,\kappa)\at{\bfX_{\mbox{\rm\tiny{even-odd}}}}
\big)
=\varnothing$.
\end{enumerate}
\end{lemma}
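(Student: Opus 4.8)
The plan is to reduce the eigenvalue problem for $L(\omega,\kappa)$ to the first-order ODE $(D_m-(\omega+\lambda)I_2)\phi=0$ on $\R\setminus\{0\}$ subject to the jump condition built into $\dom\big(L(\omega,\kappa)\big)$ (cf.\ Remark~\ref{Rem:RigorousNotation} and \eqref{Eq:DomainOfA}), following exactly the pattern of the proof of Lemma~\ref{lemma-sw}. For part~\itref{lemma-l-1} I would first note that $L(\omega,\kappa)$ is a point-interaction perturbation of the free operator $D_m-\omega I_2$, i.e.\ a closed restriction of $H_\circ^{*}-\omega I_2$ whose resolvent differs from $(D_m-\omega I_2-z)^{-1}$ by a finite-rank operator (a Krein-type resolvent formula, cf.\ \cite{cacciapuoti2017one}); hence $\sigma_{\mathrm{ess}}\big(L(\omega,\kappa)\big)=\sigma_{\mathrm{ess}}(D_m-\omega I_2)=\sigma(D_m)-\omega=\R\setminus(-m-\omega,m-\omega)$. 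The disjointness $\sigma_{\mathrm{ess}}\cap\sigma_{\mathrm p}=\emptyset$ will then follow from the analysis of \itref{lemma-l-2}--\itref{lemma-l-3}, where I will show that the entire point spectrum is contained in $\{-2\omega,\ Z(\omega,\kappa)\}$, with both numbers lying in the \emph{open} gap $(-m-\omega,m-\omega)$.

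To analyse $\sigma_{\mathrm p}$, fix $\lambda\in\C$ and set $\nu=\omega+\lambda$. If $\nu\in\R$ and $\abs\nu\ge m$, then on each half-line every solution of $(D_m-\nu)\phi=0$ is either oscillatory ($\abs\nu>m$) or affine ($\abs\nu=m$, see the Remark after \eqref{phi-p-m}), so none lies in $L^2$ and $\lambda\notin\sigma_{\mathrm p}$; this already covers the points of $\sigma_{\mathrm{ess}}$. Otherwise $\varkappa(\nu)=\sqrt{m^2-\nu^2}$ may be chosen with $\Re\varkappa(\nu)>0$, and the computation in the proof of Lemma~\ref{lemma-sw}, with $\omega$ replaced by $\nu$, shows that every $L^2(\R\setminus\{0\})$ solution of $(D_m-\nu)\phi=0$ is
\[
\phi(x)=\begin{bmatrix}a+b\sgn x\\\mu(\nu)\,(b+a\sgn x)\end{bmatrix}e^{-\varkappa(\nu)\abs x},\qquad a,b\in\C,\qquad\mu(\nu)=\frac{\varkappa(\nu)}{m+\nu},
\]
so that $\hat\phi=\left[\begin{smallmatrix}a\\\mu(\nu)b\end{smallmatrix}\right]$ and $[\phi]_0=2\left[\begin{smallmatrix}b\\\mu(\nu)a\end{smallmatrix}\right]$, while $\phi\in\bfX_{\mbox{\rm\footnotesize odd-even}}$ iff $a=0$ and $\phi\in\bfX_{\mbox{\rm\footnotesize even-odd}}$ iff $b=0$. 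Imposing the jump condition $\jj\sigma_2[\phi]_0=2\mu(\omega)(\sigma_3+2\kappa\Pi_1)\hat\phi$, with $\jj\sigma_2=\left[\begin{smallmatrix}0&1\\-1&0\end{smallmatrix}\right]$ and $\sigma_3+2\kappa\Pi_1=\mathrm{diag}(1+2\kappa,-1)$, this reduces to the two scalar equations
\[
a\big(\mu(\nu)-(1+2\kappa)\mu(\omega)\big)=0,\qquad b\big(1-\mu(\omega)\mu(\nu)\big)=0.
\]

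It then remains to treat the two parity sectors. In $\bfX_{\mbox{\rm\footnotesize odd-even}}$ (so $a=0$, $b\ne0$) the surviving equation forces $\mu(\omega)\mu(\nu)=1$; squaring and using $\mu(\nu)^2=(m-\nu)/(m+\nu)$ gives $(m-\omega)(m-\nu)=(m+\omega)(m+\nu)$, i.e.\ $\nu=-\omega$, i.e.\ $\lambda=-2\omega$, with eigenfunction proportional to $\left[\begin{smallmatrix}\mu(\omega)\sgn x\\1\end{smallmatrix}\right]e^{-\varkappa(\omega)\abs x}$ (a one-dimensional space), and $-2\omega\in(-m-\omega,m-\omega)$ since $\omega\in(-m,m)$; this is \itref{lemma-l-2}. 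In $\bfX_{\mbox{\rm\footnotesize even-odd}}$ (so $b=0$, $a\ne0$) the surviving equation forces $\mu(\nu)=(1+2\kappa)\mu(\omega)$, which for $1+2\kappa\ne0$ determines the unique $\nu=m\,\dfrac{(m+\omega)-(1+2\kappa)^2(m-\omega)}{(m+\omega)+(1+2\kappa)^2(m-\omega)}\in(-m,m)$; an elementary rearrangement of $\lambda=\nu-\omega$, using $(m-\omega)(m+\omega)=\varkappa(\omega)^2$ and $1-(1+2\kappa)^2=-4\kappa(\kappa+1)$, turns this into precisely $\lambda=Z(\omega,\kappa)$, so $Z(\omega,\kappa)\in(-m-\omega,m-\omega)$ with one-dimensional eigenspace spanned by $\left[\begin{smallmatrix}1\\\mu(\nu)\sgn x\end{smallmatrix}\right]e^{-\varkappa(\nu)\abs x}$; this is \itref{lemma-l-3}. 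Assembling the two sectors gives $\sigma_{\mathrm p}\big(L(\omega,\kappa)\big)\subseteq\{-2\omega,\ Z(\omega,\kappa)\}\subset(-m-\omega,m-\omega)$, disjoint from $\sigma_{\mathrm{ess}}$, which completes \itref{lemma-l-1}.

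I expect the two delicate points to be: (i) making the Krein-type resolvent comparison precise enough to identify $\sigma_{\mathrm{ess}}\big(L(\omega,\kappa)\big)$, which is where one genuinely invokes the point-interaction analysis of \cite{cacciapuoti2017one}; and (ii) the treatment of the real energies with $\abs{\omega+\lambda}\ge m$, where the exponentially decaying solutions degenerate into oscillatory or affine ones — this is exactly what rules out embedded eigenvalues. Everything else is the routine ODE bookkeeping above, together with the short linear-algebra computation that yields the closed form of $Z(\omega,\kappa)$.
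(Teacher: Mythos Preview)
Your proposal is correct and follows essentially the same route as the paper: solve the free ODE on each half-line, split by parity, impose the jump condition, and reduce to scalar equations for $\mu(\nu)$ (the paper invokes Weyl sequences rather than a Krein-type resolvent identity for $\sigma_{\mathrm{ess}}$, but the two arguments are interchangeable here). One small point the paper makes explicit and you gloss over: in the even-odd sector, since $\mu(\nu)>0$ for real $\nu\in(-m,m)$, the un-squared relation $\mu(\nu)=(1+2\kappa)\mu(\omega)$ can hold only when $1+2\kappa>0$, so after squaring to obtain $Z(\omega,\kappa)$ you should record this sign check rather than only excluding $1+2\kappa=0$.
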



\begin{remark}\label{Rem:OnZ}
We note
that
for each $\omega\in(-m,m)$,
one has
$\p_\kappa Z(\omega,\kappa)
=-4(m-\omega)(1+2\kappa)(1+\mu^2)/(1+(1+2\kappa)^2\mu^2)^2<0$ for
$\kappa>-1/2$,
$Z(\omega,\kappa)\to m-\omega-0$ when $\kappa\to -1/2+0$
(corresponding to a virtual level at the threshold
$\lambda=m-\omega$ when $\kappa=-1/2$),
$Z(\omega,0)=0$,
and 
$Z(\omega,\kappa)\to -m-\omega+0$ when
$\kappa\to +\infty$.
\end{remark}

\begin{proof}
The conclusion about the essential spectrum is standard:
since $L(\omega,\kappa)$ is selfadjoint,
its essential spectrum can be characterized by
the Weyl sequences,
which 
do not depend on the jump condition in~\eqref{Eq:DomainOfA}.
Let us provide more detail.
We recall that for a closed operator $A$
in the Hilbert space $\bfH$, with domain $\dom(A)$,
a sequence $\psi_j\in\bfH$
is called a Weyl sequence (or a singular sequence)
corresponding to
$\lambda\in\C$
if $\psi_j\in\dom(A)$,
$\norm{\psi_j}=1$,
$\psi_j\to 0$ weakly in $\bfH$,
$(A-\lambda I_\bfH)\psi_j\to 0$.
We also recall that
the essential spectrum
$\sigma\sb{\mathrm{ess}}(A)$
of a selfadjoint operator $A$,
can be characterized as the set of values $\lambda\in\C$
for which there are Weyl sequences
(see e.g. \cite{edmunds2018spectral}).
Given the Weyl sequence $\psi_j\in L^2(\R,\C^2)$ for
$L(\omega,\kappa)$ corresponding to some $\lambda\in\C$,
we can assume that all $\psi_j$
have supports outside of $x=0$.
Indeed, let us
fix $\rho\in C^\infty\sb{\mathrm{comp}}(\R)$,
$\rho\at{[-1,1]}=1$.
Since $(\psi_j)_{j\in\N}$ is bounded in $H^1(\R\setminus\{0\},\C^2)$,
the sequence
$(\rho\psi_j)_{j\in\N}$ is compact in $L^2(\R,\C^2)$
and converges weakly
and thus strongly to $0$ in $L^2(\R,\C^2)$
and hence also
in $H^1(\R\setminus\{0\},\C^2)$.
Consequently, we can substitute
the sequence
$\psi_j(x)$, $j\in\N$, by $\tilde\psi_j(x)
=(1-\rho(x))\psi_j(x)/\norm{(1-\rho)\psi_j}$;
we drop finitely many terms $\psi_j$
with $\supp\psi_j\subset[-1,1]$.
Therefore, Weyl sequences for $L(\omega,\kappa)$
also yield the Weyl sequences for
$D_m-\omega I_2$ (and vice versa),
resulting in the same essential spectrum
$\sigma\sb{\mathrm{ess}}(L(\omega,\kappa))$:
\[
\sigma\sb{\mathrm{ess}}(L(\omega,\kappa))
=\sigma\sb{\mathrm{ess}}(D_m-\omega I_2)
=\R\setminus(-m-\omega,m-\omega),
\qquad
\forall\omega\in(-m,m),
\quad
\forall\kappa\in\R.
\]

The conclusion on absence of embedded eigenvalues
will follow from Parts~\itref{lemma-l-2} and~\itref{lemma-l-3}.

Let us now prove Part~\itref{lemma-l-2}.
To find the point spectrum of the restriction of $L(\omega,\kappa)$
onto
$\bfX_{\mbox{\rm\footnotesize odd-even}}$,
we need to consider the spectral problem
\[
\left(
\begin{bmatrix}
m-\omega-\lambda&\p_x\\-\p_x&-m-\omega-\lambda
\end{bmatrix}
-2\mu\delta(x)\sigma_3
-4\mu\kappa\delta(x)\Pi_1
\right)
\psi(x)=0,
\qquad
x\in\R,
\]
with
$
\psi(x)=
\begin{bmatrix}
c_1\sgn x\\c_2
\end{bmatrix}e^{-\gamma\abs{x}},
$
$x\in\R$,
and with
\begin{eqnarray}\label{here-nu-a-b}
\gamma=\sqrt{(m-\omega-\lambda)(m+\omega+\lambda)},
\qquad
c_1=m+\omega+\lambda,
\qquad
c_2=\gamma
\end{eqnarray}
(the  values of $c_1$ and $c_2$ are defined up to a nonzero coefficient),
and the jump condition
\begin{eqnarray}\label{ttc}
-2c_1+2\mu c_2=0.
\end{eqnarray}
We note that
since $\psi$ is square-integrable, we need $\Re\gamma>0$
and so
\begin{eqnarray}\label{spectral-gap}
-m-\omega<\lambda<m-\omega.
\end{eqnarray}
The relation \eqref{ttc} takes the form
$
-(m+\omega+\lambda)+\mu
\sqrt{(m-\omega-\lambda)
(m+\omega+\lambda)}
=0;
$
it is satisfied only for $\lambda=-2\omega$.
We note that \eqref{ttc} implies that
the geometric multiplicity of eigenvalue $\lambda=0$ equals one;
its algebraic multiplicity also equals one
since $L(\omega,\kappa)$ is selfadjoint.

Let us prove Part~\itref{lemma-l-3}.
For the spectrum of the restriction of $L(\omega,\kappa)$
onto
$\bfX_{\mbox{\rm\footnotesize even-odd}}$,
we consider the spectral problem
\[
\left(
\begin{bmatrix}
m-\omega-\lambda&\p_x\\-\p_x&-m-\omega-\lambda
\end{bmatrix}
-2\mu\delta(x)\sigma_3
-4\mu\kappa\delta(x)\Pi_1
\right)\psi=0,
\qquad
x\in\R,
\]
with
$\psi(x)=
\begin{bmatrix}
c_1\\c_2\sgn x
\end{bmatrix}e^{-\gamma\abs{x}}$,
$x\in\R$.
This again leads to the values \eqref{here-nu-a-b}.
Substituting these values
into the jump condition
$
2c_2-2(1+2\kappa)\mu c_1=0
$
results in 
\begin{eqnarray}\label{sdf}
\sqrt{(m-\omega-\lambda)(m+\omega+\lambda)}
=(1+2\kappa)\mu(m+\omega+\lambda).
\end{eqnarray}
By \eqref{spectral-gap}, $\lambda>-m-\omega$,
hence
the above relation can hold only for $1+2\kappa>0$,
that is, for $\kappa>-1/2$;
squaring \eqref{sdf},
we arrive at
\[
m-\omega-\lambda
=(1+2\kappa)^2\mu^2(m+\omega+\lambda),
\]
which leads to
\[
\lambda
=m\frac{1-(1+2\kappa)^2\mu^2}{1+(1+2\kappa)^2\mu^2}-\omega
=-4(m-\omega)\frac{\kappa(1+\kappa)}{1+(1+2\kappa)^2\mu^2}.
\qedhere
\]
\end{proof}

We now consider the operator $\bfA$ from \eqref{def-a-ch2}
as an operator-valued function of $\omega\in(-m,m)$ and $\kappa\in\R$. In the following theorem the point spectrum and virtual levels of $\bfA(\omega,\kappa)$ are fully and explicitly described.
Similarly to
\eqref{def-x-e-o-o-e},
we introduce the subspaces
\begin{eqnarray}\label{def-oeoe}
\bfX_{\mbox{\rm\footnotesize odd-even-odd-even}}
=
L^2\sb{\mbox{\rm\footnotesize odd}}(\R,\C)
\times
L^2\sb{\mbox{\rm\footnotesize even}}(\R,\C)
\times
L^2\sb{\mbox{\rm\footnotesize odd}}(\R,\C)
\times
L^2\sb{\mbox{\rm\footnotesize even}}(\R,\C)
\subset L^2(\R,\C^4),
\\[1ex]
\label{def-eoeo}
\bfX_{\mbox{\rm\footnotesize even-odd-even-odd}}
=
L^2\sb{\mbox{\rm\footnotesize even}}(\R,\C)
\times
L^2\sb{\mbox{\rm\footnotesize odd}}(\R,\C)
\times
L^2\sb{\mbox{\rm\footnotesize even}}(\R,\C)
\times
L^2\sb{\mbox{\rm\footnotesize odd}}(\R,\C)
\subset L^2(\R,\C^4).
\end{eqnarray}
These are invariant subspaces for $\bfA(\omega,\kappa)$
(the argument is
exactly the same as in Remark~\ref{remark-inv};
now as the parity operator one takes
$\mathscr{P}:\,L^2(\R,\C^4)\to L^2(\R,\C^4)$,
$\mathscr{P}\Psi(x)=\varSigma\Psi(-x)$,
where $\varSigma=\mathop{\mathrm{diag}}[1,-1,1,-1]$),
and there is a decomposition of
$
\bfX:=L^2(\R,\C^4)
$
into a direct sum
\begin{eqnarray}\label{def-x-x}
\bfX
=\bfX_{\mbox{\rm\footnotesize odd-even-odd-even}}
\oplus
\bfX_{\mbox{\rm\footnotesize even-odd-even-odd}}.
\end{eqnarray}
The spectral analysis
of $\bfA(\omega,\kappa)$ can be done
restricting the operator to these two subspaces.


\begin{theorem}\label{theorem-sect2}
Let $\omega\in(-m,m)$ and $\kappa\in\R$.
\begin{enumerate}
\item
\label{theorem-sect2-1}
The spectrum of $\bfA(\omega,\kappa)$
is symmetric with respect to $\R$ and $\jj\R$:
\begin{eqnarray}\label{symmetry}
\lambda\in\sigma\sb{\mathrm{p}}(\bfA(\omega,\kappa))
\qquad
\Leftrightarrow
\qquad
-\lambda\in\sigma\sb{\mathrm{p}}(\bfA(\omega,\kappa))
\qquad
\Leftrightarrow
\qquad
\bar\lambda\in\sigma\sb{\mathrm{p}}(\bfA(\omega,\kappa)).
\end{eqnarray}
\item
\label{theorem-sect2-7}
\[
\sigma\sb{\mathrm{ess}}(\bfA(\omega,\kappa))
=
\begin{cases}
\jj\big(\R\setminus(-m+\abs{\omega},m-\abs{\omega})\big),
\quad
&(\omega,\kappa)\ne (0,-1)\textup;
\\
\C,
&(\omega,\kappa)=(0,-1).
\end{cases}
\]
\item
\label{theorem-sect2-5}
For all $\omega\in(-m,m)$ and $\kappa\in\R$, one has
$
0\in\sigma\sb{\mathrm{p}}(\bfA(\omega,\kappa))$.
Moreover,
\[
\dim\ker(\bfA(\omega,\kappa))
=
\begin{cases}
1,&
\omega\ne 0,\ \kappa\neq0,
\\
2,&
\omega\ne 0,\ \kappa=0,\\
3, &\omega=0,\, \kappa\ne 0,
\\
4,&
\omega=0,\ \kappa=0.
\end{cases}
\]
\item
\label{theorem-sect2-6}
Denote
\begin{eqnarray}\label{def-Omega-kappa}
\Omega_\kappa=\frac{\kappa+1}{2\kappa}m,
\qquad
\kappa\in\R\setminus\{0\}\textup;
\qquad
\mbox{
$\abs{\Omega_\kappa}<m$ as long as
$\kappa\in\R\setminus[-1/3,1]$.}
\end{eqnarray}
The algebraic multiplicity of $\lambda=0$ is given by
\begin{eqnarray}
\dim
\mathfrak{L}(\bfA(\omega,\kappa))
=\begin{cases}
2,
&\omega\in(-m,m)\setminus\{0,\Omega_\kappa\},
\quad
\kappa\in\R\setminus[-1/3,1]\textup;
\\
2,
&\omega\in(-m,m)\setminus\{0\},
\quad
\kappa\in[-1/3,1]\textup;
\\
4,
&\omega
=\Omega_\kappa,
\quad
\kappa\in\R\setminus(
\{-1\}\cup[-1/3,1])\textup;
\\
2,
&
\omega=0,\quad \kappa\not\in\{-1,\,0\}\textup;
\\
4,
&\omega=0,\quad\kappa=0\textup;
\\
6,
&\omega=0,\quad\kappa=-1.
\end{cases}
\end{eqnarray}
Above, $\mathfrak{L}(\bfA(\omega,\kappa))$
is the notation for the generalized eigenspace
of $\bfA(\omega,\kappa)$ corresponding to $\lambda=0$.
\item
\label{theorem-sect2-2}
For all $\omega\in(-m,m)$ and $\kappa\in\R$,
one has
\[
\pm 2\omega\jj\in\sigma\sb{\mathrm{p}}(\bfA(\omega,\kappa)\at{\bfX_{\mbox{\rm\tiny odd-even-odd-even}}})\subset\sigma\sb{\mathrm{p}}(\bfA(\omega,\kappa)).
\]
For $\omega\neq 0$,
eigenvalues $\pm 2\omega\jj$
of $\bfA(\omega,\kappa)\at{\bfX_{\mbox{\rm\tiny odd-even-odd-even}}}$
are of geometric multiplicity one;
they are embedded into the essential spectrum
for $\abs{\omega}\ge m/3$
and they are isolated eigenvalues
of algebraic multiplicity one
for $\omega\in(-m/3,m/3)\setminus\{0\}$.
For $\omega=0$,
these eigenvalues
are of geometric and algebraic multiplicity two.
(The invariant space $\bfX_{\mbox{\rm\tiny odd-even-odd-even}}$
of $\bfA(\omega,\kappa)$
was defined in \eqref{def-oeoe}.)
\item
\label{theorem-sect2-3}
For $\omega\ne 0$,
the virtual levels
at the thresholds
$\lambda=\pm(m-\abs{\omega})\jj$
occur
for
$\kappa<-1$, $-1<\kappa<2^{-1/2}-1$, and $\kappa>2^{-1/2}$
at
$\omega=
\mathcal{T}_\kappa$, where
\[
\mathcal{T}_\kappa=
\begin{cases}
\displaystyle
\mathcal{T}_\kappa^{-}:=\frac{(\kappa+1)^2}{(\kappa+2)\kappa}m,
&\quad
\kappa\in(-1,2^{-1/2}-1)\textup;
\\[1.5ex]
\displaystyle
\mathcal{T}_\kappa^{+}:=\frac{(\kappa+1)^2}{(3\kappa+2)\kappa}m,
&\quad
\kappa\in\R\setminus[-1,2^{-1/2}].
\end{cases}
\]
We note that $\mathcal{T}_\kappa^{-}\in(-m,0)$
and $\mathcal{T}_\kappa^{+}\in(0,m)$
in the relevant ranges of $\kappa$.
\item
\label{theorem-sect2-4}
The point spectrum of $\bfA(\omega,\kappa)$
contains only the following additional eigenvalues
besides $\{0,\pm 2\omega\jj\}$:

\begin{enumerate}
\item
\label{theorem-sect2-4-a}
$\kappa<-1$\textup:

There are no additional eigenvalues
for
$-m<\omega\le\mathcal{T}_\kappa^{+}$\textup;
as
$\omega$ grows from $\omega\le\mathcal{T}_\kappa^{+}$
to $\mathcal{T}_\kappa^{+}+0$,
two simple purely imaginary eigenvalues
of opposite signs
bifurcate from the thresholds $\pm(m-|\omega|)\jj$
of the essential spectrum
and stay in the spectral gap
for 
$\omega\in(\mathcal{T}_\kappa^{+},\Omega_\kappa)$.
As $\omega\to\Omega_\kappa-0$, these eigenvalues collide at $\lambda=0$.
As $\omega$ grows to $\Omega_\kappa+0$,
two simple real eigenvalues of opposite signs bifurcate from $\lambda=0$,
stay on $\R$ for $\omega\in(\Omega_\kappa,2\Omega_\kappa)$
(hence the corresponding solitary waves are linearly unstable),
and go to $\pm\infty$ as
$\omega\to 2\Omega_\kappa-0$.
There are no additional eigenvalues
for $2\Omega_\kappa\le\omega<m$.

\item
\label{theorem-sect2-4-b}
$\kappa=-1$\textup:

There are
no additional eigenvalues
for $\omega\in(-m,m)\setminus\{0\}$;

for $\omega=0$,
one has
$\sigma\sb{\mathrm{p}}(\bfA)
=\C\setminus
\big(
\jj(-\infty,-m]\cup\jj[m,+\infty)
\big)$.


\item
\label{theorem-sect2-4-c}
$-1<\kappa<-1/3$\textup:

For $\max(2\Omega_\kappa,-m)<\omega<\Omega_\kappa$,
there are two simple real eigenvalues of opposite signs
(hence the corresponding solitary waves are linearly unstable).
When
$-1<\kappa\le-2/3$
(so that $2\Omega_k\in[-m,0)$),
these eigenvalues go to $\pm\infty$ as
$\omega\to 2\Omega_\kappa+0$,
and then there are no additional eigenvalues
for
$-m<\omega\le 2\Omega_\kappa$.
As $\omega\to\Omega_\kappa-0$, these eigenvalues 
collide at $\lambda=0$.
As $\omega$ grows to $\Omega_\kappa+0$,
two simple purely imaginary eigenvalues
of opposite signs
bifurcate from $\lambda=0$,
stay along the the imaginary axis
for $\omega\in(\Omega_\kappa,\mathcal{T}_\kappa^{-})$,
and disappear at the thresholds
$\pm(m-\abs{\omega})\jj$
as $\omega\to\mathcal{T}_\kappa^{-}-0$.
There are no additional eigenvalues
for
$\omega\ge\mathcal{T}_\kappa^{-}$.

\item
\label{theorem-sect2-4-d}
$-1/3\le\kappa<2^{-1/2}-1$\textup:

There are two simple purely imaginary eigenvalues
of opposite signs
for $-m<\omega<\mathcal{T}_\kappa^{-}$,
which disappear at the thresholds
$\pm(m-\abs{\omega})\jj$
as $\omega\to\mathcal{T}_\kappa^{-}-0$.
There are no additional eigenvalues
for
$\omega\ge\mathcal{T}_\kappa^{-}$.

\item
\label{theorem-sect2-4-e}
$2^{-1/2}-1\le\kappa\le 2^{-1/2}$\textup:
No additional eigenvalues
for any $\omega\in(-m,m)$.

\item
\label{theorem-sect2-4-f}
$\kappa>2^{-1/2}$\textup:

There are no eigenvalues
for 
$\omega\le\mathcal{T}_\kappa^{+}$.
As $\omega$ increases to
$\mathcal{T}_\kappa^{+}+0$,
two simple purely imaginary eigenvalues
of opposite signs
bifurcate from the thresholds
$\pm(m-\abs{\omega})\jj$
and stay on the imaginary axis.
If $\kappa>1$ (so that $\Omega_\kappa\in(-m,m)$),
these eigenvalues collide at $\lambda=0$
as $\omega\to\Omega_\kappa-0$,
and, as $\omega$ grows to $\Omega_\kappa+0$,
two simple real eigenvalues of opposite signs
bifurcate from $\lambda=0$
and stay on the real axis
for $\omega\in(\Omega_\kappa,m)$
(hence the corresponding solitary waves are linearly unstable).

\end{enumerate}
\end{enumerate}
\end{theorem}

\begin{figure}
\begin{center}


\begin{tikzpicture}[>=stealth,thick, xscale=1.5, yscale=1.0 ]
    \begin{axis}[
        xmin=-2,xmax=2,
        ymin=-1,ymax=1,
        axis x line=middle,
        axis y line=middle,
        ylabel={$\omega$},
        xlabel={$\kappa$},
x tick label style={yshift={(\tick==-1)*2em}},
y tick label style={xshift={(\tick==-1)*2em}},
        ]

\addplot[line width=0.2mm,black,dotted,-]expression[domain=-2:2,samples=10]{1};
\addplot[line width=0.2mm,black,dotted,-]expression[domain=-2:2,samples=10]{-1};
\addplot[line width=1.0mm,no marks,blue,-]expression[domain=-2:-0.333,samples=50]{(x+1)/(2*x)}; 
\addplot[line width=1.0mm,no marks,blue,-]expression[domain=1:2,samples=10]{(x+1)/(2*x)}; 
\addplot[line width=0.75mm,no marks,,dash pattern={on 7pt off 2pt on 1 pt off 2pt },red,-]expression[domain=-2:-0.5,samples=50]{(x+1)/x}; 

\addplot[line width=0.75mm,no marks,dotted,MyDarkGreen,-]expression[domain=-2:-1,samples=50]{(x+1)^2/x/(3*x+2)}; 
\addplot[line width=0.75mm,no marks,dotted,MyDarkGreen,-]expression[domain=0.7:2,samples=10]{(x+1)^2/x/(3*x+2)}; 

\addplot[line width=0.75mm,no marks,dotted,MyDarkGreen,-]expression[domain=-1:-0.293,samples=50]{(x+1)^2/x/(x+2)}; 

\addplot[line width=0.1mm,no marks,red,-]expression[domain=-2:-0.3,samples=600]{3*(x+1)/4/x+(x+1)/4/x*sin(10000*x)}; 
\addplot[line width=0.1mm,no marks,red,-]expression[domain=0.7:2,samples=600]{((x+1)/2/x+1)/2+((x+1)/2/x-1)/2*sin(10000*x)}; 
\addplot[line width=0.1mm,no marks,dotted,MyDarkGreen,-]expression[domain=-1:-0.293,samples=100]
 {((x+1)/2/x+(x+1)^2/x/(x+2))/2+((x+1)/2/x-(x+1)^2/x/(x+2))/2*sin(20000*x)};
\addplot[line width=0.1mm,no marks,dotted,MyDarkGreen,-]expression[domain=-2:-1,samples=100]
 {((x+1)/2/x+(x+1)^2/x/(3*x+2))/2+((x+1)/2/x-(x+1)^2/x/(3*x+2))/2*sin(20000*x)};
\addplot[line width=0.1mm,no marks,dotted,MyDarkGreen,-]expression[domain=0.7:2,samples=200]
 {((x+1)/2/x+(x+1)^2/x/(3*x+2))/2+((x+1)/2/x-(x+1)^2/x/(3*x+2))/2*sin(20000*x)};

\filldraw[black,fill=black] (-1,0) ellipse (3pt and 4pt);

    \end{axis}
\end{tikzpicture}


\caption{
Location of eigenvalues
of the linearized operator $\bfA(\omega,\kappa)$
for different values of parameters
$\omega\in(-m,m)$ and $\kappa\in\R$.
In the unshaded regions, there are no eigenvalues
besides $\lambda=0$ and $\lambda=\pm 2\omega\jj$.
The Kolokolov curve
$\omega=\Omega_\kappa$
(\textcolor{blue}{\bf thick solid curves} for $\kappa<-0.5$ and $\kappa>1$)
correspond to
collision of two eigenvalues
at $\lambda=0$
(as indicated by the Kolokolov condition).
The virtual level curve
$\omega=\mathcal{T}_\kappa$
(\textcolor{MyDarkGreen}{\bf dotted curves} for $\kappa<2^{-1/2}-1$ and $\kappa>2^{-1/2}$)
corresponds to virtual levels at thresholds $\pm\jj(m-\abs{\omega})$.
The \textcolor{MyDarkGreen}{\bf dotted regions}
between the Kolokolov and virtual level curves
correspond to two
purely imaginary eigenvalues
in the spectral gap.
On the other side of the Kolokolov curve
there is a pair of real eigenvalues
(linear instability; regions on the graph
filled with \textcolor{red}{\bf thin lines}).
In the region where there are two real eigenvalues $\pm\lambda\in\R$,
one has $\lambda\to +\infty$ as $\omega$ and $\kappa$ approach
the curve $\omega=2\Omega_\kappa$
(\textcolor{red}{\bf dash-dot curve} for $\kappa<-2/3$).
At $\kappa=-1$, $\omega=0$ ({\bf bullet} on the plot),
the spectrum of the linearized operator
$\bfA$ consists of the whole complex plane.
}
\label{fig-omegak}
\end{center}
\end{figure}
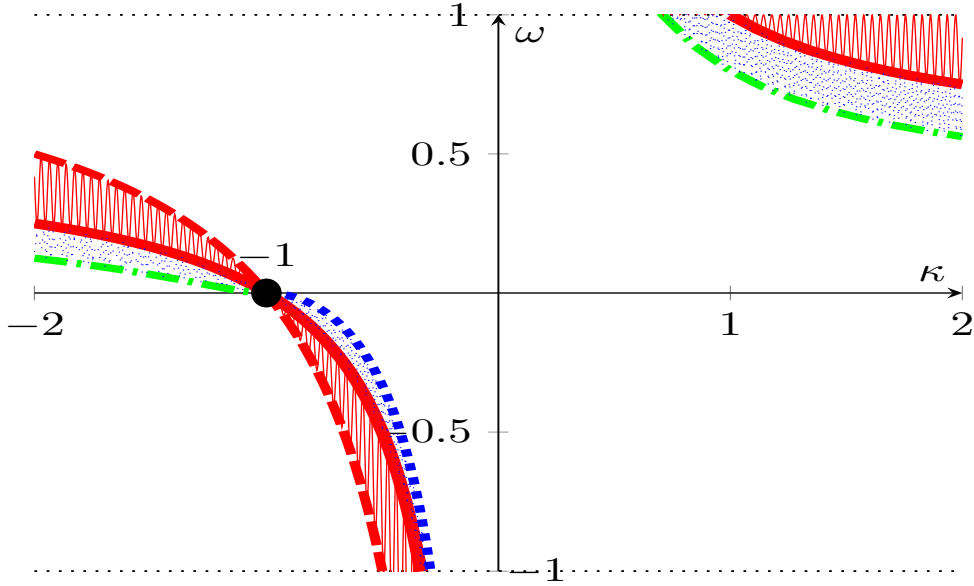

We depict these cases on Figure~\ref{fig-omegak}.

\begin{remark}
The spectrum of the linearization operator for a one-dimensional Schr\"odinger model with concentrated nonlinearity is studied in \cite{buslaev2008asymptotic} and more thoroughly in \cite{komech2012asymptotic}.
\end{remark}

\begin{remark}
For $\kappa<-1$,
there are real eigenvalues
for $\omega\in(\Omega_\kappa,2\Omega_\kappa)$
which bifurcate from zero (when $\omega=\Omega_\kappa$)
and slip off to $\pm\infty$ as $\omega\ge 2\Omega_\kappa$,
as one can see from explicit expressions for eigenvalues;
see Lemma~\ref{lemma-w-kappa} below.
As a result,
for $\omega$ between $\Omega_\kappa$ and $2\Omega_\kappa$,
there are
eigenvalue families $\pm\lambda(\omega,\kappa)\in\sigma\sb{\mathrm{p}}(\bfA(\omega,\kappa))$
such that
\[
\begin{cases}
\lim\limits\sb{\omega\to 2\Omega_\kappa-0}\lambda(\omega,\kappa)\to+\infty,
&\kappa<-1;
\\[2ex]
\lim\limits\sb{\omega\to 2\Omega_\kappa+0}\lambda(\omega,\kappa)\to+\infty,
&-1<\kappa<-1/2.
\end{cases}
\]
\end{remark}


We give the proof
of Theorem~\ref{theorem-sect2}
In the next section.

\section{Proof of Theorem~\ref{theorem-sect2}}
\label{sect-theorem-sect2-proof}

\subsection{
Symmetries and the essential spectrum of $\bfA$}

For Theorem~\ref{theorem-sect2}~\itref{theorem-sect2-1},
we notice that the symmetry
$
\lambda\in\sigma\sb{\mathrm{p}}(\bfA)
\ \Leftrightarrow\ \bar\lambda\in\sigma\sb{\mathrm{p}}(\bfA)
$
follows from $\bfA$ having real coefficients,
while the symmetry
$
\bar\lambda\in\sigma\sb{\mathrm{p}}(\bfA)
\ \Leftrightarrow\ -\lambda\in\sigma\sb{\mathrm{p}}(\bfA)
$
follows from
\begin{equation}\label{Eq:Symmetries}
\bfA
=
\mathbf{J}\mathbf{L},\quad\bfA^*
=
(\mathbf{J}\mathbf{L})^*=\mathbf{L}^* \mathbf{J}^*=-\mathbf{L}\mathbf{J},
\quad
\mbox{where}
\quad
\mathbf{L}=
\begin{bmatrix}L_{+}&0\\0&L_{-}\end{bmatrix},
\quad
\mathbf{J}=\begin{bmatrix}0&I_2\\-I_2&0\end{bmatrix},
\end{equation}
with $L_\pm$ defined in \eqref{l-p-l-m-1},
while $\sigma\sb{\mathrm{p}}(\mathbf{L}\mathbf{J})
=\sigma\sb{\mathrm{p}}(\mathbf{J}\mathbf{L})$
since $\mathbf{J}$ is bounded and invertible.
This proves
Theorem~\ref{theorem-sect2}~\itref{theorem-sect2-1}.

Let us consider the essential spectrum
(Theorem~\ref{theorem-sect2}~\itref{theorem-sect2-7}).
For $\kappa=0$,
the essential spectrum
can be obtained from Lemma~\ref{lemma-l}:
\begin{eqnarray}\label{mm}
\sigma\sb{\mathrm{ess}}(\bfA(\omega,0))
=
\sigma\sb{\mathrm{ess}}
\left(
\begin{bmatrix}0&1\\-1&0\end{bmatrix}
\otimes
(D_m-\omega I_2-2\mu\delta(x))
\right)
=
\jj\big(\R\setminus(-m+\abs{\omega},m-\abs{\omega})\big).
\end{eqnarray}
Since the Weyl sequences do not depend on the value of
$\kappa\in\R$ 
(the argument is the same as in the proof of Lemma~\ref{lemma-l}~\itref{lemma-l-1}),
the Weyl spectrum
$\sigma\sb{\mathrm{ess},2}(\bfA)$
(we recall that for a closed operator
$A$ in the Hilbert space $\bfH$,
$\sigma\sb{\mathrm{ess},2}(\bfA)$
is defined as $\lambda\in\C$
such that either the range of $A-\lambda I_\bfH$
is not closed
or $\ker(A-\lambda I_\bfH)=\infty$;
see \cite[\S I.4]{edmunds2018spectral})
coincides with \eqref{mm}.
Its complement in the complex plane
consists of a single connected component:
\begin{eqnarray}
\label{component}
\C\setminus\sigma\sb{\mathrm{ess},2}(\bfA)
=
\C\setminus\big(
\jj(-\infty,-(m-\abs{\omega})]
\cup
\jj[(m-\abs{\omega}),+\infty)
\big).
\end{eqnarray}
If $(\omega,\kappa)\ne(0,-1)$,
then, as we will see below
in the proof of Part~\itref{theorem-sect2-4},
the component \eqref{component}
contains at most discrete spectrum,
and we deduce that 
the essential spectrum
$\sigma\sb{\mathrm{ess}}(\bfA):=\sigma\sb{\mathrm{ess},5}(\bfA)$
also coincides with \eqref{mm}.
Let us mention here that $\sigma_{\mathrm{ess,5}}(A)$
for a closed operator $A$
is known as the Browder spectrum \cite[Definition 11]{browder1961spectral}
and that
$\sigma(A)$ consists of a disjoint union of
$\sigma_{\mathrm{ess},5}(A)$
and the discrete spectrum $\sigma_{\mathrm{d}}(A)$,
which in turn is the set of isolated points of
the spectrum with corresponding Riesz projections
being of finite rank;
for more details, see
\cite{edmunds2018spectral,opus}.

If $(\omega,\kappa)=(0,-1)$, then,
as we will see in the proof of
Part~\itref{theorem-sect2-4-b},
this entire connected component consists of the point spectrum;
it follows that in this case one has
$
\sigma(\bfA(0,-1))
=\sigma\sb{\mathrm{ess}}(\bfA(0,-1))=\C$.
This proves
Part~\itref{theorem-sect2-7}.

\subsection{
Discrete spectrum of $\bfA$}
\label{sect-spectrum-discrete}

We proceed to the discrete spectrum.

Part~\itref{theorem-sect2-5} of Theorem~\ref{theorem-sect2}
follows from Lemma~\ref{lemma-l} (see also Remark~\ref{Rem:OnZ}).
Namely, the kernel of $L_{-}=L(\omega,0)$
is generically one-dimensional
except at $\omega=0$, when it is two-dimensional.
The kernel of $L_{+}=L(\omega,\kappa)$
is generically zero-dimensional;
it is one-dimensional when only one of $\omega$, $\kappa$
vanishes
and it is two-dimensional when both
$\omega=0$ and $\kappa=0$.
This immediately adds up to the conclusion about
the dimension of the kernel of
$\bfA(\omega,\kappa)
=
\begin{bmatrix}0&L(\omega,0)\\-L(\omega,\kappa)&0\end{bmatrix}$
stated in Part~\itref{theorem-sect2-5}.

For all other cases of Theorem~\ref{theorem-sect2},
it is convenient
to start with the exceptional case $\kappa=0$.


\begin{remark}\label{remark-bad}
Let us point out that the case $\kappa=0$ is possible:
given $\alpha>0$ and $\omega\in(-m,m)$
which satisfy
\begin{eqnarray}\label{ff}
f(\alpha^2)=2\mu(\omega)
\end{eqnarray}
(see \eqref{a-b-f}),
one can always vary the nonlinearity
$f(\tau)$ so that \eqref{ff} is satisfied
while $f'(\alpha^2)=0$;
now \eqref{def-kappa}
gives $\kappa=0$.
At the same time,
according to
\eqref{a-b-f},
now $\alpha$ is no longer
differentiable with respect to
$\omega$,
and then the derivative
$\p_\omega\phi_\omega$ is undefined.
We will return to this in Section~\ref{Sec:Kolokolov}.
\end{remark}

In this case, one has
$
\bfA(\omega,0)
=
\begin{bmatrix}0&1\\-1&0\end{bmatrix}
\otimes
L(\omega,0)$,
with $L$ from \eqref{l-kappa},
and the statements of Theorem~\ref{theorem-sect2}
follow from Lemma~\ref{lemma-l}
which gives
$\sigma\sb{\mathrm{p}}(L(\omega,0))=\{-2\omega,\,0\}$,
with both eigenvalues
of geometric multiplicity
$1$ when $\omega\ne 0$
(with the kernel in $\bfX_{\mbox{\rm\footnotesize even-odd}}$)
and $\lambda=0$ of geometric multiplicity $2$ when $\omega=0$
(with the kernel
in $\bfX_{\mbox{\rm\footnotesize odd-even}}$
and in $\bfX_{\mbox{\rm\footnotesize even-odd}}$).
It follows that $\sigma\sb{\mathrm{p}}(\bfA(\omega,0))=\{0,\,\pm 2\omega\jj\}$,
with
$\lambda=\pm 2\omega\jj$ of geometric multiplicity $1$ and
$\lambda=0$ of geometric multiplicity $2$ when
$\omega\ne 0$;
$\lambda=0$ 
of geometric multiplicity $4$ when
$\omega=0$.
We note that since $L(\omega,0)$ is selfadjoint,
there could be no nonzero $\psi,\,\theta\in\dom(L(\omega,0))$
such that $L(\omega,0)\psi=0$ and $L(\omega,0)\theta=\psi$,
hence the algebraic multiplicity
of eigenvalue $\lambda=0$ of $\bfA(\omega,0)$
coincides with its geometric multiplicity.
This completes the proof of 
Theorem~\ref{theorem-sect2} in the case $\kappa=0$.

\medskip

In the rest of the argument, we make the assumption
that $\kappa$ is nonzero:
\begin{eqnarray}\label{kappa-nonzero}
\kappa\in\R\setminus\{0\}.
\end{eqnarray}
Below, we will use the fact that the operator $\bfA(\omega,\kappa)$
from \eqref{def-a-ch2}
is invariant in the subspaces
$\bfX_{\mbox{\rm\footnotesize odd-even-odd-even}}$
and
$\bfX_{\mbox{\rm\footnotesize even-odd-even-odd}}$
(see \eqref{def-oeoe} and \eqref{def-eoeo}) of $L^2(\R,\C^4)$,
so the search for eigenvalues and eigenvectors can be restricted to the analysis
of the spectrum of $\bfA(\omega,\kappa)$ in these two subspaces.

\subsubsection{Discrete spectrum of $\bfA$
in odd-even-odd-even subspace
and eigenvalue $2\omega\jj$}
\label{sect-spectrum-oe}

To prove Theorem~\ref{theorem-sect2}~\itref{theorem-sect2-2},
we restrict $\bfA$ to the subspace
$\bfX_{\mbox{\rm\footnotesize odd-even-odd-even}}$.

For $x\ne 0$,
a representation for an $L^2$-solution of the equation
\begin{eqnarray}\label{a-psi-lambda}
\bfA\Psi=\lambda\Psi,
\qquad
\lambda\in\C,
\end{eqnarray}
belonging to the subspace
$\bfX_{\mbox{\rm\footnotesize odd-even-odd-even}}$
(see \eqref{def-x-x})
is given by
\begin{eqnarray}\label{psi-o-e}
\Psi(x)
=
c_1
\begin{bmatrix}
\nu_{+}(\omega,\Lambda)\sgn x\\S_{+}(\omega,\Lambda)
\\-\jj \nu_{+}(\omega,\Lambda)\sgn x\\-\jj S_{+}(\omega,\Lambda)
\end{bmatrix}e^{-\nu_{+}(\omega,\Lambda)\abs{x}}
+
c_2
\begin{bmatrix}
\nu_{-}(\omega,\Lambda)\sgn x\\S_{-}(\omega,\Lambda)
\\\jj \nu_{-}(\omega,\Lambda)\sgn x\\\jj S_{-}(\omega,\Lambda)
\end{bmatrix}e^{-\nu_{-}(\omega,\Lambda)\abs{x}},
\qquad
c_1,\,c_2\in\C;
\end{eqnarray}
above, the value $\Lambda\in\C$
is defined by
\begin{eqnarray}\label{lambda-lambda}
\lambda=\jj\Lambda,
\end{eqnarray}
with $\lambda$ an eigenvalue from \eqref{a-psi-lambda}.
We used the notations
\begin{eqnarray}\label{def-m-n}
\begin{array}{l}
\nu_{+}(\omega,\Lambda):=\sqrt{m^2-(\omega+\jj\lambda)^2}=\sqrt{m^2-(\omega-\Lambda)^2},
\qquad
\Re\nu_{+}(\omega,\Lambda)\ge 0,
\\[2ex]
\nu_{-}(\omega,\Lambda):=\sqrt{m^2-(\omega-\jj\lambda)^2}=\sqrt{m^2-(\omega+\Lambda)^2},
\qquad
\Re\nu_{-}(\omega,\Lambda)\ge 0
\end{array}
\end{eqnarray}
(these expressions
come from considering the characteristic equation of the homogeneous system
with constant coefficients,
$(\bfA-\jj\Lambda I_4)\Psi=0$,
with $\Psi\in C^{\infty}(\R\setminus\{0\})$)
and
\begin{eqnarray}\label{def-r-s}
S_{+}(\omega,\Lambda)=m-\omega+\Lambda,
\qquad
S_{-}(\omega,\Lambda)=m-\omega-\Lambda.
\end{eqnarray}
We assume that
$\nu_{-}$ and $\nu_{+}$ are non-vanishing
and with positive real part
as long as the corresponding coefficient
in \eqref{psi-o-e} is nonzero,
so that $\Psi\in L^2(\R,\C^4)$.

\begin{remark}\label{remark-sometimes}
We note that
the vectors in \eqref{psi-o-e}
are linearly independent
unless either $S_{+}=\nu_{+}=0$
or $S_{-}=\nu_{-}=0$.
This degeneracy takes place at the threshold point
$\Lambda=m-\omega$
(where $S_{-}=\nu_{-}=0$)
and at the threshold point $\Lambda=-(m-\omega)$
(where $S_{+}=\nu_{+}=0$);
note that for $\omega<0$ these threshold points
are embedded into the essential spectrum.
At
$\Lambda=m-\omega$,
in place of \eqref{psi-o-e},
one needs to consider
\begin{eqnarray}\label{psi-o-e-1}
\Psi(x)
=
c_1
\begin{bmatrix}
\nu_{+}(\omega,m-\omega)\sgn x\\S_{+}(\omega,m-\omega)
\\-\jj \nu_{+}(\omega,m-\omega)\sgn x\\-\jj S_{+}(\omega,m-\omega)
\end{bmatrix}e^{-\nu_{+}(\omega,m-\omega)\abs{x}}
+
c_2
\begin{bmatrix}
\nu_{-}^{\mathrm{reg}}(\omega,m-\omega)\sgn x
\\S_{-}^{\mathrm{reg}}(\omega,m-\omega)
\\\jj \nu_{-}^{\mathrm{reg}}(\omega,m-\omega)\sgn x
\\\jj S_{-}^{\mathrm{reg}}(\omega,m-\omega)
\end{bmatrix},
\qquad
c_1,\,c_2\in\C,
\end{eqnarray}
with
\begin{eqnarray}
\label{reg-1}
&&
\nu_{-}^{\mathrm{reg}}(\omega,m-\omega):=
\lim\sb{\Lambda\to m-\omega+0}
\frac{\nu_{-}(\omega,\Lambda)}{\sqrt{m-\omega-\Lambda}}
=\sqrt{2m+2\omega},
\\
&&
\label{reg-2}
S_{-}^{\mathrm{reg}}(\omega,m-\omega):=
\lim\sb{\Lambda\to m-\omega+0}
\frac{S_{-}(\omega,\Lambda)}{\sqrt{m-\omega-\Lambda}}
=0.
\end{eqnarray}
The case $\Lambda=-(m-\omega)$
is treated similarly.
\end{remark}

We note that
if $\lambda\in\jj\R$, $\abs{\lambda}\ge m+\abs{\omega}$
(so that $\lambda$ is beyond the embedded
thresholds at $\pm \jj(m+\abs{\omega})$),
then
both
$m^2-(\omega-\jj\lambda)^2\le 0$ and $m^2-(\omega+\jj\lambda)^2\le 0$,
hence
there is no corresponding square-integrable
function $\Psi\ne 0$ of the form \eqref{psi-o-e}.

An eigenvector is an element of the domain of  $\dom(\bfA)$ given in \eqref{Eq:DomainOfA}; it has
to satisfy the jump condition at the origin,
which is given by
\begin{eqnarray}\label{nu-a-b}
\begin{cases}
2\jj \nu_{+} c_1-2\jj \nu_{-} c_2
+2\mu(-\jj S_{+} c_1+\jj S_{-} c_2)=0,
\\2\nu_{+} c_1+2\nu_{-} c_2
-2\mu(S_{+} c_1+S_{-} c_2)=0.
\end{cases}
\end{eqnarray}
Since $c_1,\,c_2\in\C$ are not simultaneously zeros,
the compatibility condition leads to
\begin{eqnarray}\label{det-zero}
\det\begin{bmatrix}
\jj\nu_{+}-\jj S_{+} \mu&-\jj \nu_{-}+\jj S_{-}\mu
\\\nu_{+}-S_{+}\mu&\nu_{-}-S_{-}\mu
\end{bmatrix}
=
2\jj (\nu_{-}-S_{-} \mu)(\nu_{+}-S_{+} \mu)
=0.
\end{eqnarray}
The relation $\nu_{+}-S_{+}\mu=0$
results in
$m^2-(\omega-\Lambda)^2
=(m-\omega+\Lambda)^2
\frac{m-\omega}{m+\omega}$;
canceling $m-\omega+\Lambda\ne 0$, we have
\[
m+\omega-\Lambda
=(m-\omega+\Lambda)\frac{m-\omega}{m+\omega},
\]
which leads to $\Lambda=2\omega$.
Similarly, the relation
$\nu_{-}-S_{-}\mu=0$
leads to $\Lambda=-2\omega$.
Thus, if $\omega\neq 0$,
the factors $\nu_{+}-S_{+}\mu$ and $\nu_{-}-S_{-}\mu$
in \eqref{det-zero}
cannot vanish simultaneously;
one can see from \eqref{nu-a-b}
that either $c_2$ or $c_1$ vanishes
(depending on whether
$\nu_{+}-S_{+}\mu=0$
or $\nu_{-}-S_{-}\mu=0$, respectively),
hence the geometric multiplicity of
each of eigenvalues $\pm 2\jj \omega$ is equal to one.

In the case $\omega=0$,
the jump condition \eqref{nu-a-b} becomes trivial,
and $c_1$ and $c_2$ in \eqref{psi-o-e}
could take arbitrary values.
Then the two terms in the right-hand side of
\eqref{psi-o-e}
are linearly independent eigenvectors
corresponding to eigenvalue $\lambda=2\omega\jj=0$
of $\bfA$ restricted to
$\bfX\sb{\mbox{\rm\footnotesize odd-even-odd-even}}$,
proving that its geometric multiplicity equals two.

We note that
for $m/3\le \abs{\omega}<m$
the eigenvalue $\lambda=\Lambda\jj=2\omega\jj$ is
embedded into the essential spectrum of $\bfA$;
the same is true for $\lambda=-2\omega\jj$.
For example, if $m/3\le\omega<m$,
then
$\nu_{+}=\varkappa$,
the value
$\nu_{-}=\sqrt{m^2-9\omega^2}$ is purely imaginary,
$S_{+}(\omega,2\omega)=m+\omega$,
$S_{-}(\omega,2\omega)=m-3\omega$,
$\nu_{+}(\omega,2\omega)=S_{+}(\omega,2\omega)\mu$,
the system \eqref{nu-a-b}
takes the form
$\nu_{-}(\omega,2\omega) c_2-S_{-}(\omega,2\omega)\mu c_2=0$,
which results in
$c_2=0$ and arbitrary $c_1\in\C$;
due to $\nu_{+}(\omega,2\omega)>0$, one can see that
$\Psi$ from \eqref{psi-o-e}
belongs to $L^2$.

Let us consider the 
algebraic multiplicity
of eigenvalues $\pm 2\omega\jj$ when they are isolated
(that is, when $\abs{\omega}<m/3$).
We recall that $\bfA^\ast=-\mathbf{J}^\ast \bfA \mathbf{J}$
(see~\eqref{Eq:Symmetries})
and hence if $\Psi\in \range(\bfA-\lambda I_4)$ then
$\Psi\in \mathbf{J}\ker(\bfA-\lambda I_4)^\bot$.
So if the algebraic multiplicity were larger than one
(while the geometric multiplicity equals one),
then necessarily $\Psi$ and $\mathbf{J} \Psi$
would be orthogonal.
At the same time,
for $\Lambda=2\omega$,
when in \eqref{psi-o-e}
we can take $c_1=1$ and $c_2=0$,
one has
$\Psi^*\mathbf{J}\Psi=-2\jj S_{+}(\omega,2\omega)^2
=-2\jj(m+\omega)^2$,
which is nonzero, showing that the algebraic multiplicity
of $\lambda=\jj\Lambda=2\omega\jj$
coincides with the geometric multiplicity.
The case $\Lambda=-2\omega$ is treated similarly.
We thus conclude that
for $\omega\in(-m/3,m/3)\setminus\{0\}$
the eigenvalues $\lambda=\pm 2 \omega\jj$ are
of algebraic multiplicity one
while for $\omega=0$ the eigenvalue $\lambda=0$
is of algebraic multiplicity two.
This completes the proof of
Theorem~\ref{theorem-sect2}~\itref{theorem-sect2-2}.

\begin{remark}
One has
$\lambda=\pm 2\omega\jj\in\sigma\sb{\mathrm{p}}(\bfA)$
due to the $\mathbf{SU}(1,1)$-invariance of the Soler model
\cite{boussaid2018spectral}.
\end{remark}

\subsubsection{Discrete spectrum of $\bfA$
in even-odd-even-odd subspace
and virtual levels at thresholds}
\label{sect-spectrum-eo}

In this Section we prove 
Theorem~\ref{theorem-sect2}~\itref{theorem-sect2-3}
and
Theorem~\ref{theorem-sect2}~\itref{theorem-sect2-4}.
Similarly to our approach in Section~\ref{sect-spectrum-oe},
any square-integrable solution of $\bfA\Psi=\lambda\Psi $  with
$\lambda=\jj\Lambda$ in the subspace
$\bfX_{\mbox{\rm\footnotesize even-odd-even-odd}}$
of $L^2(\R,\C^4)$
(see \eqref{def-x-x}) can be represented as
(cf. \eqref{psi-o-e})
\begin{eqnarray}\label{psi-x-e-o-e-o}
\Psi(x)
=
c_1
\begin{bmatrix}
\nu_{+}(\omega,\Lambda)\\S_{+}(\omega,\Lambda)\sgn x\\-\jj \nu_{+}(\omega,\Lambda)\\-\jj S_{+}(\omega,\Lambda)\sgn x
\end{bmatrix}e^{-\nu_{+}(\omega,\Lambda)\abs{x}}
+
c_2
\begin{bmatrix}
\nu_{-}(\omega,\Lambda)\\S_{-}(\omega,\Lambda)\sgn x\\\jj \nu_{-}(\omega,\Lambda)\\\jj S_{-}(\omega,\Lambda)\sgn x
\end{bmatrix}e^{-\nu_{-}(\omega,\Lambda)\abs{x}},
\quad
c_1,\,c_2\in\C,
\end{eqnarray}
with
$\nu_\pm(\omega,\Lambda)$, $S_\pm(\omega,\Lambda)$
from \eqref{def-m-n} and \eqref{def-r-s},
where we will assume
that both $\nu_{-}$ and $\nu_{+}$ are non-vanishing
and with positive real part, so that $\Psi\in L^2(\R,\C^4)$.
Again, by Remark~\ref{remark-sometimes},
at $\Lambda=m-\omega$,
the vectors in \eqref{psi-x-e-o-e-o}
become linearly dependent
(the second vector vanishes)
and one can use
the following decomposition
(see \eqref{psi-o-e-1}, \eqref{reg-1}, \eqref{reg-2}):
\begin{eqnarray}\label{psi-e-o-1}
\Psi(x)
=
c_1
\begin{bmatrix}
\nu_{+}(\omega,m-\omega)
\\S_{+}(\omega,m-\omega)\sgn x\\-\jj \nu_{+}(\omega,m-\omega)
\\-\jj S_{+}(\omega,m-\omega)\sgn x
\end{bmatrix}e^{-\nu_{+}(\omega,m-\omega)\abs{x}}
+
c_2
\begin{bmatrix}
\nu_{-}^{\mathrm{reg}}(\omega,m-\omega)
\\S_{-}^{\mathrm{reg}}(\omega,m-\omega)\sgn x
\\\jj \nu_{-}^{\mathrm{reg}}(\omega,m-\omega)
\\\jj S_{-}^{\mathrm{reg}}(\omega,m-\omega)\sgn x
\end{bmatrix},
\qquad
c_1,\,c_2\in\C.
\end{eqnarray}
The jump condition for $\Psi$ at the origin
takes the form 
\begin{eqnarray}\label{Eq:SystemSandR-e-o-e-o}
\begin{cases}
-2\jj S_{+} c_1+2\jj S_{-} c_2
-2(
-\jj \nu_{+} c_1
+\jj \nu_{-} c_2)\mu
=0,
\\-(2 S_{+} c_1+2 S_{-} c_2)
+2(\nu_{+} c_1+\nu_{-} c_2)(1+2\kappa)\mu
=0.
\end{cases}
\end{eqnarray}
To find eigenvalues,
we need to consider the compatibility condition
for the system \eqref{Eq:SystemSandR-e-o-e-o},
\begin{eqnarray}\label{c-cond}
\det
\begin{bmatrix}
\mu\nu_{+}-S_{+}&-\mu\nu_{-}+S_{-}
\\(1+2\kappa)\mu\nu_{+}-S_{+}&(1+2\kappa)\mu\nu_{-}-S_{-}
\end{bmatrix}
=0.
\end{eqnarray}

\begin{lemma}\label{lemma-no}
For
$\omega\in(-m,m)$ and $\kappa\in\R\setminus\{0\}$,
the operator $\bfA(\omega,\kappa)$
restricted onto
$\bfX_{\mbox{\rm\footnotesize even-odd-even-odd}}$
has no embedded eigenvalues.
\end{lemma}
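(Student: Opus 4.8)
The plan is to rule out eigenvalues $\lambda=\jj\Lambda$ of $\bfA(\omega,\kappa)$ in the subspace $\bfX_{\mbox{\rm\footnotesize even-odd-even-odd}}$ with $\Lambda\in\R$ and $\abs{\Lambda}\ge m-\abs{\omega}$, that is, with $\lambda$ in the essential spectrum. The starting point is the representation \eqref{psi-x-e-o-e-o}: any $L^2$ solution of $(\bfA-\jj\Lambda)\Psi=0$ on $\R\setminus\{0\}$ inside that subspace is a combination of the two exponential branches with rates $\nu_+(\omega,\Lambda)$ and $\nu_-(\omega,\Lambda)$ from \eqref{def-m-n}. For real $\Lambda$ one has $\Re\nu_+>0\iff\abs{\omega-\Lambda}<m$ and $\Re\nu_->0\iff\abs{\omega+\Lambda}<m$, and a short elementary check shows that both of these hold simultaneously exactly when $\abs{\Lambda}<m-\abs{\omega}$. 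Hence, as soon as $\lambda=\jj\Lambda$ lies in the essential spectrum, at least one of $\nu_+,\nu_-$ is purely imaginary or zero, so the corresponding factor $e^{-\nu_\pm\abs{x}}$ — whether it is a nontrivial oscillation of modulus one, a constant, or (in the degenerate cases where the associated vector in \eqref{psi-x-e-o-e-o} collapses and one must pass to the regularized representation \eqref{psi-e-o-1} of Remark~\ref{remark-sometimes}) a nonzero constant vector or a $\sgn x$--type function — is not in $L^2(\R,\C^4)$; membership of $\Psi$ in $L^2$ therefore forces the coefficient of that branch to vanish.

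I would then split into two cases. If \emph{both} $\nu_+$ and $\nu_-$ fail to have positive real part — which happens precisely when $\abs{\Lambda}\ge m+\abs{\omega}$ (at or beyond the embedded thresholds) — then both $c_1$ and $c_2$ in \eqref{psi-x-e-o-e-o} must vanish, so $\Psi\equiv0$ and $\lambda$ is not an eigenvalue. Otherwise exactly one branch has positive-real-part rate; say it is the $\nu_+$-branch, so that $c_2=0$ and $c_1\ne0$ (otherwise $\Psi\equiv0$), the case $c_1=0,\ c_2\ne0$ being handled identically with the roles of $\pm$ interchanged. Setting $c_2=0$ in the jump conditions \eqref{Eq:SystemSandR-e-o-e-o} leaves the two scalar relations
\[
\nu_+\mu=S_+,\qquad (1+2\kappa)\mu\,\nu_+=S_+,
\]
with $\mu=\mu(\omega)=\sqrt{(m-\omega)/(m+\omega)}$ and $S_+=S_+(\omega,\Lambda)=m-\omega+\Lambda$. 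Subtracting them gives $2\kappa\mu\,\nu_+=0$; since $\kappa\ne0$ by hypothesis and $\mu(\omega)>0$ for $\omega\in(-m,m)$, this forces $\nu_+=0$, contradicting $\Re\nu_+>0$.

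The remaining bookkeeping concerns the threshold values $\Lambda=\pm(m-\omega)$ (endpoints of the spectral gap) and $\Lambda=\pm(m+\omega)$ (the embedded thresholds), where \eqref{psi-x-e-o-e-o} degenerates. At the embedded thresholds the vanishing rate comes with $S_\pm\ne0$, so the corresponding branch carries a non-$L^2$ component proportional to $\sgn x$ and its coefficient must be zero; at the gap endpoints $S_\pm=0$ as well, and the regularized branch from \eqref{psi-e-o-1} is a nonzero constant vector, again not in $L^2$, so once more that coefficient vanishes and we are reduced to the single-branch analysis above. I expect this threshold bookkeeping to be the only delicate point; once it is in place, the lemma follows from the single observation that, restricted to one surviving branch, the two lines of the jump condition \eqref{Eq:SystemSandR-e-o-e-o} are incompatible whenever $\kappa\ne0$.
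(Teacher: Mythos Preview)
Your proposal is correct and follows essentially the same approach as the paper: you observe that for $\Lambda$ in the essential spectrum at least one of $\nu_\pm$ fails to have positive real part, so the corresponding coefficient in \eqref{psi-x-e-o-e-o} must vanish for $L^2$-membership, and then the two lines of the jump condition \eqref{Eq:SystemSandR-e-o-e-o} with a single surviving branch force $2\kappa\mu\nu_\pm=0$, a contradiction since $\kappa\ne0$, $\mu>0$, and $\Re\nu_\pm>0$. The paper's proof is a bit terser on the threshold bookkeeping (it simply refers to the regularized form of Remark~\ref{remark-sometimes}), but the argument is the same.
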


\begin{proof}
To have square-integrable solutions,
the values of $c_1,\,c_2$ in \eqref{psi-x-e-o-e-o}
corresponding to
purely imaginary or zero values of $\nu_{+}$, $\nu_{-}$
have to vanish.
If $\Re\nu_{-}=0$, then $c_2=0$ in \eqref{psi-x-e-o-e-o}
(if $\Lambda=m-\omega$, then
$c_2=0$ in \eqref{psi-e-o-1}).
Then the jump condition \eqref{Eq:SystemSandR-e-o-e-o} yields
\[
S_{+}-\nu_{+} \mu=0,
\qquad
S_{+}-(1+2\kappa)\mu\nu_{+}=0.
\]
Since $\mu(\omega)>0$,
the assumption $\kappa\ne 0$ leads to
$\nu_{+}=0$, but then
\eqref{psi-x-e-o-e-o} would not be in $L^2$ unless $c_1=0$.
The case when $\Re\nu_{+}=0$,
so that $c_1=0$
is treated similarly.
One concludes that there are no embedded eigenvalues
corresponding to eigenfunctions from $\bfX_{\mbox{\rm\footnotesize even-odd-even-odd}}$.
\end{proof}

Let us now study isolated eigenvalues.
We rewrite
the compatibility condition \eqref{c-cond}
as
\begin{eqnarray}\label{condition-e-o-e-o}
\Gamma(\Lambda)=0,
\end{eqnarray}
where
\begin{eqnarray}\label{def-gamma}
\Gamma(\Lambda):&=&-\nu_{-} \nu_{+}\mu^2
(2\kappa+1)
+\mu \nu_{-}(\kappa+1)(m-\omega+\Lambda)
\\
\nonumber
&&
+
(m-\omega-\Lambda)(\kappa+1)\mu \nu_{+}
-(m-\omega-\Lambda)(m-\omega+\Lambda),
\end{eqnarray}
with
$\mu=\mu(\omega)=\sqrt{(m-\omega)/(m+\omega)}$
and
$\nu_\pm=\nu_\pm(\omega,\Lambda)$ introduced in \eqref{def-m-n}.
One can see 
that $\nu_{-}(\omega,\Lambda)$
vanishes at
$\Lambda=m-\omega$ and $\Lambda=-m-\omega$
while
$\nu_{+}(\omega,\Lambda)$ vanishes at $\Lambda=m+\omega$ and at $\Lambda=-m+\omega$;
it follows that
$\Gamma(\Lambda)$ vanishes at $\Lambda=m-\omega$ and $\Lambda=-m+\omega$.

\begin{definition}
We define the first, or ``physical'', sheet of the  Riemann surface
of the function $\Gamma(\Lambda)$ to be the one where $\Re \nu_{+}\ge 0$
and $\Re\nu_{-}\ge 0$.
Below, we will call it the $(+,+)$ Riemann sheet.
\end{definition}

We now consider $\Lambda$ outside of the thresholds:
\begin{eqnarray}\label{Lambda-no-threshold}
\Lambda\not\in\left\{ -m-\omega, -m+\omega, m-\omega, m+\omega\right\},
\end{eqnarray}
so that $\nu_{-}\nu_{+}$ does not vanish. Let us find the solutions of $\Gamma(\Lambda)=0$
on the first Riemann sheet.
We divide 
\eqref{condition-e-o-e-o}
by $\nu_{-} \nu_{+}$
(this corresponds to ``normalizing'' the vectors from
\eqref{psi-x-e-o-e-o}
near $\nu_\pm\to 0$; now the resulting function will not vanish identically
near $\Lambda=m-\omega$ and $\Lambda=m+\omega$).
Taking into account the fact that $z = (\sqrt z)^2$ for all $z\in \C\backslash \R_-$,
and that $\sqrt{c z} = \sqrt c \sqrt z$ for all $c>0$ and $z\in\C\backslash \R_-$,
after some manipulations (dividing by $\mu$ and factorizing), we end up with the equation 
\begin{equation}\label{EQ1prov}
\kappa^2
=
\left(\kappa+1
-
\frac{\sqrt{1-\frac{\Lambda}{m-\omega}}}{\sqrt{1+\frac{\Lambda}{m+\omega}}}
\right)
\left(\kappa+1
-
\frac{\sqrt{1+\frac{\Lambda}{m-\omega}}}{\sqrt{1-\frac{\Lambda}{m+\omega}}}
\right).
\end{equation}
In this formula,
we choose the branch of $\sqrt z$,
$z\in \C\backslash \R_-$,
such that
$\Re \sqrt z\ge 0$. 

\begin{remark}\label{remark-root}
One has $\sqrt{z w} = \sqrt z \sqrt w$ by analytical extension from $\R_{+}$ to $z\in\C\setminus\R_{-}$ and $w\in\C\setminus\R_{-}$,
with $\arg(z)\neq \arg(w) +\pi \mod 2\pi$ for instance.
\end{remark}

Let us first consider the case
$\kappa=-1$.
In this case, \eqref{EQ1prov}
leads to
$
1-\frac{\Lambda^2}{(m-\omega)^2}
=
1-\frac{\Lambda^2}{(m+\omega)^2},
$
and thus
$\kappa=-1$
corresponds to the following two cases:
\begin{enumerate}
\item
$\omega\in(-m,m)\setminus\{0\}$
and then $\Lambda=0$,
so that
$
\sigma\sb{\mathrm{p}}\big(\bfA(\omega,-1)
\at{\bfX_{\mbox{\rm\tiny even-odd-even-odd}}}\big)
=\{0\};
$
\item
$\omega=0$ and $\Lambda\in\C$ is arbitrary; 
the values corresponding to the point spectrum
are $\Lambda\in\C\setminus((-\infty,-m]\cup[m,+\infty))$,
corresponding to $\Re\nu_\pm(0,\Lambda)=\sqrt{m^2-\Lambda^2}>0$.
We conclude that
\[
\sigma\sb{\mathrm{p}}(\bfA(0,-1))
=\C\setminus\big(\jj(-\infty,-m]\cup\jj[m,+\infty)\big),
\qquad
\sigma(\bfA(0,-1))
=\sigma\sb{\mathrm{ess}}(\bfA(0,-1))
=\C.
\]
\end{enumerate}
This proves Theorem~\ref{theorem-sect2}~\itref{theorem-sect2-4-b}.

\medskip

In the rest of this subsection,
we only consider
the spectrum of the restriction of $\bfA$
onto $\bfX\sb{\mbox{\footnotesize even-odd-even-odd}}$
in the case
$\kappa\ne -1$;
together with \eqref{kappa-nonzero},
this reduces our consideration to the situation
\begin{eqnarray}\label{kappa-nonzero-1}
\kappa\in\R\setminus\{-1,\,0\}.
\end{eqnarray}

Due to Remark~\ref{remark-root}, we claim that
on the first Riemann sheet of $\Gamma(\Lambda)$ one has
\begin{equation}\label{claim}
\frac{\sqrt{1-\frac{\Lambda}{m-\omega}}}{\sqrt{1+\frac{\Lambda}{m+\omega}}} = 
\sqrt{\frac{1-\frac{\Lambda}{m-\omega}}{1+\frac{\Lambda}{m+\omega}}}
\qquad \textrm{and} \qquad 
\frac{\sqrt{1+\frac{\Lambda}{m-\omega}}}{\sqrt{1-\frac{\Lambda}{m+\omega}}} =
\sqrt{\frac{1+\frac{\Lambda}{m-\omega}}{1-\frac{\Lambda}{m+\omega}}}.
\end{equation}
Let us prove the first relation in \eqref{claim}.
Note that 
$
\frac{\sqrt{1-\frac{\Lambda}{m-\omega}}}{\sqrt{1+\frac{\Lambda}{m+\omega}}}=
\frac{\sqrt{1-\frac{\Lambda}{m-\omega}}\sqrt{1+\frac{\overline\Lambda}{m+\omega}}}{\left|\sqrt{1+\frac{\Lambda}{m+\omega}}\right|^2},
$
where we used $\overline{\sqrt{z}} = \sqrt{\overline z}$,
which holds true for all $z\in \C\backslash\R_-$.
It is enough to prove that 
\[
\sqrt{1-\frac{
\phantom{\int}
\Lambda
\phantom{\int}
}{m-\omega}}
\,
\sqrt{1+\frac{\overline\Lambda}{m+\omega}}
=
\sqrt{\left(1-\frac{\Lambda}{m-\omega}\right)\left(1+\frac{\overline\Lambda}{m+\omega}\right)}.
\]
Since 
$
\Im \left(1-\frac{\Lambda}{m-\omega} \right) \left(1+\frac{\overline\Lambda}{m+\omega}\right)
= - \frac{2 m}{m^2-\omega^2} \Im \Lambda,
$
using Remark~\ref{remark-root},
we arrive at the first relation \eqref{claim}.
Similarly, to prove the second relation in \eqref{claim}, it is enough to note that 
$
\Im\left(1+\frac{\Lambda}{m-\omega}\right) \left(1-\frac{\overline \Lambda}{m+\omega}\right)
=\frac{2 m}{m^2-\omega^2} \Im \Lambda
$
and again use Remark~\ref{remark-root}.
The conclusion is that on the first Riemann sheet of $\Gamma(\Lambda)$,
equation \eqref{EQ1prov}
can be rewritten equivalently as 
\begin{equation}\label{EQ1}
\kappa^2
=
\left(\kappa+1
-
\sqrt{\frac{1-\frac{\Lambda}{m-\omega}}{1+\frac{\Lambda}{m+\omega}}}
\right)
\left(\kappa+1
-
\sqrt{\frac{1+\frac{\Lambda}{m-\omega}}{1-\frac{\Lambda}{m+\omega}}}
\right),
\qquad \Lambda \in \C. 
\end{equation}
To solve this equation, we set
\begin{equation}
\label{X-vs-Lambda}
X =
\sqrt{\frac{1-\frac{\Lambda}{m-\omega}}{1+\frac{\Lambda}{m+\omega}}},
\qquad \Re X \geq 0.
\end{equation}
Notice that $X=1$ if and only if $\Lambda=0$.
The relation \eqref{X-vs-Lambda}
leads to
$X^2 = \frac{1-\frac{\Lambda}{m-\omega}}{1+\frac{\Lambda}{m+\omega}}$,
and then
\begin{eqnarray}\label{Lambda-vs-X}
\Lambda = \frac{1-X^2}{\frac{1}{m-\omega} +\frac{X^2}{m+\omega}}.
\end{eqnarray}
For $\Lambda\neq m+\omega$ (equivalently, for $X^2\neq -\frac{\omega}{m-\omega}$),
we also have 
\[
\frac{1+\frac{\Lambda}{m-\omega}}{1-\frac{\Lambda}{m+\omega}} = \frac{m+\omega - \omega X^2}{\omega+(m-\omega)X^2},
\]
so that we rewrite equation \eqref{EQ1} as 
\begin{equation}\label{across}
\kappa^2
=
\left(\kappa+1 - X
\right)
\left(\kappa+1
-
\sqrt{\frac{m+\omega - \omega X^2}{\omega+(m-\omega)X^2}}
\right).
\end{equation}
If $\kappa+1 - X = 0$,
then $\kappa =0$ and hence $X=1$, leading to $\Lambda=0$.
Now we need to consider the case
\begin{eqnarray}\label{kappa-1-x}
\kappa+1 - X \ne 0.
\end{eqnarray}
Under this condition, the relation \eqref{across} is equivalent to
$\kappa+1 - \frac{\kappa^2}{\kappa+1 - X}
=
\sqrt{\frac{m+\omega - \omega X^2}{\omega+(m-\omega)X^2}}\,$,
which leads to
\begin{equation}\label{EQ4}
\left(\kappa+1 - \frac{\kappa^2}{\kappa+1 - X}\right)^2
=
\frac{m+\omega - \omega X^2}{\omega+(m-\omega)X^2}.
\end{equation}
Recall that we consider the situation when the following conditions are satisfied:
$X^2\neq -\frac{\omega}{m-\omega}$, $X^2\neq -\frac{m+\omega}{m-\omega}$ (which are equivalent to
$\Lambda\neq m+\omega$ and $\Lambda\neq -m-\omega$, respectively; see \eqref{Lambda-no-threshold}),
and $ X\neq \kappa +1$ (see \eqref{kappa-1-x}).
Equation \eqref{EQ4} can be rewritten as 
$((\kappa+1)^2 - \kappa^2  -(\kappa+1)X)^2/(\kappa+1 - X)^2
=
(m+\omega - \omega X^2)/(\omega+(m-\omega)X^2)$.
Taking into account that $X^2\neq -\frac{\omega}{m-\omega}$ and $ X\neq \kappa +1$,
the preceding relation can be rewritten as 
\[
((\kappa+1)^2 - \kappa^2  -(\kappa+1)X)^2(\omega+(m-\omega)X^2)  
=
(m+\omega - \omega X^2)(\kappa+1 - X)^2,
\]
hence
\begin{equation}\label{equation}
(X-1)^2
\big(
a(\omega,\kappa) X^2 - 2b(\omega,\kappa) X - c(\omega,\kappa)
\big) = 0,
\end{equation}
with
\begin{eqnarray}\label{a-b-c}
\begin{array}{c}
a(\omega,\kappa)=
m (\kappa+1)^2 -\omega  \kappa(\kappa+ 2),
\qquad
b(\omega,\kappa)
=\kappa\big( m (\kappa+ 1) -  \omega \kappa\big),
\\[1.5ex]
c(\omega,\kappa)
= m (\kappa+1)^2 - \omega \kappa (3\kappa +2).
\end{array}
\end{eqnarray}
Denote
\begin{eqnarray}
\label{def-tau-minus}
&&
\mathcal{T}_\kappa^{-}:=\frac{(\kappa+1)^2}{\kappa(\kappa+2)}m,
\qquad
\kappa\in
(-2^{-1/2}-1,2^{-1/2}-1);
\\[0.5ex]
&&
\label{def-tau-plus}
\mathcal{T}_\kappa^{+}:=\frac{(\kappa+1)^2}{\kappa(3\kappa+2)}m,
\qquad
\kappa\in\R\setminus[-2^{-1/2},2^{-1/2}];
\\[1ex]
&&
\label{def-tau}
\mathcal{T}_\kappa=
\begin{cases}
\mathcal{T}_\kappa^{-},
\qquad
&\kappa\in
(-1,2^{-1/2}-1);
\\[1ex]
\mathcal{T}_\kappa^{+},
\qquad
&\kappa\in\R\setminus[-1,2^{-1/2}].
\end{cases}
\end{eqnarray}
We note that the intervals in
\eqref{def-tau-minus} and \eqref{def-tau-plus}
are such that the values $\mathcal{T}_\kappa^{-}$ and $\mathcal{T}_\kappa^{+}$
remain inside $(-m,m)$;
we also note that on these intervals
one has:
\[
\mathcal{T}_\kappa^{-}\le 0,
\qquad
\mathcal{T}_\kappa^{+}\ge 0.
\]
The regions of the strip $-m<\omega<m$
in the $(\kappa,\omega)$-plane
where $a$, $b$, and $c$ take particular signs
or vanish are characterized in the following lemma.

\begin{lemma}\label{lemma-abc}
Let $\kappa\in\R$, $\omega\in(-m,m)$.
We have:
\begin{itemize}
\item
$a(\omega,\kappa)<0$
if and only if
$\kappa\in(-2^{-1/2}-1,2^{-1/2}-1)$,
$\omega\in(-m,\mathcal{T}_\kappa^{-})$\textup;

\item
$b(\omega,\kappa)<0$
if and only if
$\kappa<-1/2$,
$\omega\in(2\Omega_\kappa,m)$
or
$\kappa\in[-1/2,0)$, $\omega\in(-m,m)$\textup;

\item
$c(\omega,\kappa)<0$
if and only if
$\kappa\in\R\setminus[-2^{-1/2},2^{-1/2}]$,
$\omega\in(\mathcal{T}_\kappa^{+},m)$.
\end{itemize}

\end{lemma}

The proof of Lemma~\ref{lemma-abc} follows from \eqref{a-b-c}
by inspection.
We recall that $\Omega_\kappa=\frac{\kappa+1}{2\kappa}m$ was defined in
\eqref{def-Omega-kappa}.

Equation \eqref{equation} has 
root $X_0=1$ of multiplicity two
(by \eqref{Lambda-vs-X}, it corresponds to $\Lambda=0$).

Let us consider the case $a(\omega,\kappa)=0$.
In this case, by Lemma~\ref{lemma-abc},
$\omega=\mathcal{T}_\kappa^{-}$ (and also $\kappa\ne -2$);
one has:
$$b(\mathcal{T}_\kappa^{-},\kappa) = m\frac{\kappa(\kappa+1)}{\kappa+2},
\qquad
c(\mathcal{T}_\kappa^{-},\kappa) = -2m\frac{\kappa(\kappa+1)^2}{\kappa+2}.
$$
We note that
$b(\mathcal{T}_\kappa^{-},\kappa)\ne 0$
since
$\kappa\ne -1$, $\kappa\ne 0$
(see \eqref{kappa-nonzero-1}),
and $\omega=\mathcal{T}_\kappa^{-}\ne 2\Omega_\kappa$,
hence
equation~\eqref{equation} has the root
\[
X_-(\mathcal{T}_\kappa^{-},\kappa) = -\frac{c(\mathcal{T}_\kappa^{-},\kappa) }{2b(\mathcal{T}_\kappa^{-},\kappa)}=\kappa+1.
\]
In this case, we have
$X_-(\mathcal{T}_\kappa^{-},\kappa)^2
=(\kappa+1)^2
=-\frac{\mathcal{T}_\kappa^{-}}{m-\mathcal{T}_\kappa^{-}}$;
by \eqref{Lambda-vs-X}, this gives the value
\[
\Lambda
=\frac{1-X_{-}^2}{\frac{1}{m-\omega}+\frac{X_{-}^2}{m+\omega}}
=\frac{1+\frac{\mathcal{T}^{-}_\kappa}{m-\mathcal{T}^{-}_\kappa}}
{\frac{1}{m-\mathcal{T}^{-}_\kappa}
-\frac{\mathcal{T}^{-}_\kappa}
{(m+\mathcal{T}^{-}_\kappa)(m-\mathcal{T}^{-}_\kappa)}
}
=m+\mathcal{T}^{-}_\kappa=m+\omega,
\]
which corresponds to a threshold and
which we do not consider (see \eqref{Lambda-no-threshold}).

Thus, we can assume that
$a(\omega,\kappa) \ne 0$.
In this case, besides root $X_0=1$, equation~\eqref{equation} has the roots
\begin{equation}\label{EQ3}
X_{\pm}(\omega,\kappa) = \frac{b(\omega,\kappa) \pm \sqrt{b^2(\omega,\kappa) + a(\omega,\kappa) c(\omega,\kappa) }}{ a(\omega,\kappa)},
\qquad \Re \sqrt{b^2(\omega,\kappa) + a(\omega,\kappa) c(\omega,\kappa) }\geq 0.
\end{equation}
We need
to make sure that the values of the parameters the functions
$X_+(\omega,\kappa) $ and $X_-(\omega,\kappa) $ are
\emph{admissible solutions},
in the sense that they correspond to either eigenvalues
or virtual levels of the linearized operator.
To simplify the reasoning,
we note that if $\Lambda$ is a solution of the original equation \eqref{EQ1},
then so is $-\Lambda$. This symmetry has a counterpart in terms of
the variable $X$:
if $X$, with $\Re X\geq 0$,
is a solution of equation \eqref{across}, then so is 
\begin{equation}\label{mountains}
Y = \sqrt{\frac{m+\omega - \omega X^2}{\omega+(m-\omega)X^2}},
\qquad 
\Re Y\ge 0;
\end{equation}
the same formula expresses $X$ in terms of $Y$.
We claim that nonzero values of
$\Lambda$ correspond to $X\ne Y$. Indeed, if $X=Y$, then
the relation
$X^2=\frac{m+\omega - \omega X^2}{\omega+(m-\omega)X^2}$
implies that
\[
(m-\omega)X^4+2\omega X^2
-m-\omega=0,
\qquad
X^2=\frac{-\omega\pm\sqrt{\omega^2+m^2-\omega^2}}{m-\omega}
=\frac{-\omega\pm m}{m-\omega}.
\]
Substituting
$X^2=1$ into \eqref{Lambda-vs-X} we see that it corresponds to $\Lambda=0$,
while $X^2=\frac{-m-\omega}{m-\omega}$
does not correspond to a finite value of $\Lambda$.
Thus,
the functions $X_+(\omega,\kappa)$ and $X_-(\omega,\kappa)$
which correspond to nonzero values $\Lambda$
are conjugated by \eqref{mountains},
satisfying the following relations:
\begin{eqnarray}\label{for-1}
&
X_+^2 = \frac{m+\omega - \omega X_-^2}{\omega+(m-\omega)X_-^2},
\qquad
X_-^2 = \frac{m+\omega - \omega X_+^2}{\omega+(m-\omega)X_+^2};
\\[1ex]
\label{for-2}
&
\Re X_{+}\ge 0,
\qquad
\Re X_{-}\ge 0.
\end{eqnarray}
By \eqref{EQ3},
for all $\kappa\in\R$ and $\omega\in(-m,m)$,
as long as $a(\omega,\kappa)\ne 0$,
there are the relations
\begin{equation}\label{Eq:Dune2}
X_++X_- =2b/a,
\qquad X_+X_- = - c/a.  
\end{equation}
By \eqref{EQ3} and \eqref{for-2},
the roots $X_\pm$
corresponding to nonzero eigenvalues or thresholds
are either both real and nonnegative,
or are mutually complex conjugate with nonnegative real part.
This takes place
in the following two cases:
\begin{eqnarray}
\mbox{\it either}\qquad
a(\omega,\kappa)>0, \quad b(\omega,\kappa)\ge 0,
\quad c(\omega,\kappa)\le 0
\label{case-1}
\\
\mbox{\it or}\qquad
a(\omega,\kappa)<0, \quad b(\omega,\kappa)\le 0,
\quad c(\omega,\kappa)\ge 0.
\label{case-2}
\end{eqnarray}
The corresponding values of parameters
in the $(\kappa,\omega)$-plane can be found with the aid
of Lemma~\ref{lemma-abc}.
The case \eqref{case-1} corresponds to the values $\kappa<-1$,
$0<\mathcal{T}_\kappa<\omega<2\Omega_\kappa$
and $\kappa>2^{-1/2}$, $0<\mathcal{T}_\kappa<\omega<m$;
the case \eqref{case-2}
corresponds to $-1<\kappa<2^{-1/2}-1$, $\max(-m,2\Omega_\kappa)<\omega<\mathcal{T}_\kappa<0$ (these are the shaded regions on Figure~\ref{fig-omegak}).
For these values of $\kappa$ and $\omega$,
equation \eqref{across} has not only root $X_0=1$
(corresponding to the eigenvalue $\lambda_0 =\jj\Lambda_0 = 0$),
but also the the roots
$X_+(\omega,\kappa)$ and  $X_-(\omega,\kappa)$
which lead to eigenvalues
$\lambda_{+}=\jj\Lambda_{+}$ and $\lambda_{-}=\jj\Lambda_{-}=-\lambda_{+}$,
with
\begin{eqnarray}\label{lambda-pm}
\Lambda_+ = \frac{1-X_+^2}{\frac{1}{m-\omega} +\frac{X_+^2}{m+\omega}},  \qquad 
\Lambda_- = \frac{1-X_-^2}{\frac{1}{m-\omega} +\frac{X_-^2}{m+\omega}}=-\Lambda_+.
\end{eqnarray}

\begin{remark}\label{remark-continuous}
One can determine for which values of $\kappa$
and $\omega$ the eigenvalues $\lambda=\pm\jj\Lambda$
are continuous functions of these parameters.
By \eqref{EQ3},
$X_{+}$ depends continuously on $\kappa$ and $\omega$
as long as $a(\omega,\kappa)\ne 0$;
that is, away from the set
$\omega=\mathcal{T}^{-}_\kappa$
(defined in \eqref{def-tau-minus}).
For $\omega\ne\mathcal{T}^{-}_\kappa$,
by \eqref{lambda-pm},
$\Lambda_{+}$ is not a continuous function
of $\kappa$ and $\omega$
when
$X_{+}^2=-(m+\omega)/(m-\omega)$.
This implies that $X_{+}$ is purely imaginary;
by \eqref{EQ3},
this means that $b(\omega,\kappa)=0$
and thus $\omega=2\Omega_\kappa$;
see \eqref{def-Omega-kappa}.
Thus,
the dependence of eigenvalues
$\lambda=\pm\jj\Lambda_{+}$
on $\omega,\,\kappa$
is continuous
except perhaps at the curves
$\omega=\mathcal{T}^{-}_\kappa$
and $\omega=2\Omega_\kappa$.
\end{remark}

Due to Theorem~\ref{theorem-sect2}~\itref{theorem-sect2-1},
the values $\Lambda_+$ and $\Lambda_- = - \Lambda_+$
in \eqref{lambda-pm}
are either real or purely imaginary.
Let us derive an explicit expression for $\Lambda_{+}$.
One has:
\[\begin{aligned}
\Lambda_+ =&\frac12 \left( \frac{1-X_+^2}{\frac{1}{m-\omega} +\frac{X_+^2}{m+\omega}} - \frac{1-X_-^2}{\frac{1}{m-\omega} +\frac{X_-^2}{m+\omega}} \right) = 
- \frac12
\frac{ \frac{X_+^2-X_-^2}{m+\omega}  +\frac{X_+^2-X_-^2}{m-\omega}}{\frac{1}{(m-\omega)^2} + \frac{X_+^2+X_-^2}{m^2-\omega^2} + \frac{X_+^2X_-^2}{(m+\omega)^2} }
\\=&- \frac{m}{m^2-\omega^2} 
\frac{ X_+^2-X_-^2}{\frac{1}{(m-\omega)^2} + \frac{1}{m^2-\omega^2}  \frac{1}{\omega} (m+\omega - (m-\omega) X_+^2X_-^2)  + \frac{X_+^2X_-^2}{(m+\omega)^2} };
\end{aligned}
\]
we used the identity
$\omega(X_+^2+X_-^2) = (m+\omega) - (m-\omega)X_+^2X_-^2$
which follows from \eqref{for-1}.
Using \eqref{EQ3}, we derive:
\begin{eqnarray}\label{lambda-plus}
\Lambda_+ = - \frac{4 b\, \omega  \sqrt{b^2 + ac}}{\frac{m+\omega}{m-\omega} a^2 - \frac{m-\omega}{m+\omega} c^2},
\end{eqnarray}
with $a=a(\omega,\kappa)$, $b=b(\omega,\kappa)$, and $c=c(\omega,\kappa)$
from \eqref{a-b-c}.
Taking into account that 
\begin{equation}\label{b2+ac}
b^2(\omega,\kappa) + a(\omega,\kappa) c(\omega,\kappa)
=(\kappa+1)\big(m(\kappa+1)-2\kappa\omega\big)
\big(m(2\kappa^2+2\kappa+1)-2\kappa(\kappa+1)\omega\big)
\end{equation}
and noticing that for $\omega\in[-m,m]$
one has  $\R\ni m(2\kappa^2+2\kappa +1)-2\kappa(\kappa+1)\omega>0\,$,
one derives:
\begin{align}\label{lambda-plus-minus}
\Lambda_\pm
=\pm\Lambda_{+}
&=
\pm
\frac{m^2-\omega^2}{2\Omega_\kappa-\omega}\frac{\sqrt{2\kappa(\kappa+1)(\Omega_\kappa -\omega)(m(2\kappa^2+2\kappa +1)-2\kappa(\kappa+1)\omega)}}
{m(2\kappa^2+2\kappa +1)-2\kappa(\kappa+1)\omega}
\nonumber
\\
&=\pm\frac{m^2-\omega^2}{2\Omega_\kappa-\omega}\sqrt{\frac{2\kappa(\kappa+1)(\Omega_\kappa -\omega)}
{m(2\kappa^2+2\kappa +1)-2\kappa(\kappa+1)\omega}}
=\pm\frac{m^2-\omega^2}{2\Omega_\kappa-\omega}\sqrt{\frac{\Omega_\kappa -\omega}{W_\kappa-\omega}},
\end{align}
where $\Omega_\kappa$ is from \eqref{def-Omega-kappa}
and
\begin{equation}\label{def-W-kappa}
W_\kappa:=m\frac{2\kappa^2+2\kappa+1}{2\kappa(\kappa+1)}.
\end{equation}
Above, we factored out $\kappa(\kappa+1)$,
which is nonzero due to \eqref{kappa-nonzero-1}.
Taking into account that $\mathcal{T}_\kappa^{+} \leq W_\kappa$,
it then follows that for $\mathcal{T}_\kappa^{+} \leq \omega \leq \Omega_\kappa $
the values $\Lambda_\pm$ from \eqref{lambda-plus-minus} are real
(hence the corresponding eigenvalues $\lambda_\pm=\jj\Lambda_\pm$ are purely imaginary),
while for $\omega >\Omega_\kappa$ they are purely imaginary
(with the corresponding
eigenvalues $\lambda_\pm=\jj\Lambda_\pm$ being real).
For $\omega=\Omega_\kappa$, the two eigenvalues coincide and are both equal to zero, and $\lambda =0$ is an eigenvalue with total algebraic  multiplicity four. Notice also that $\Lambda_\pm$ are going to infinity as $\omega$ approaches $2\Omega_\kappa$ if $\kappa<-1$.

Since $W_\kappa<-m$ for $\kappa\in(-1,0)$ and $W_\kappa>m$
for $\kappa\in(-\infty,-1)\cup(0,+\infty)$,
while $\Omega_\kappa\in(-m,m)$ if and only if
$\kappa\not\in[-1/3,1]$,
we can summarize the location of eigenvalues as follows:
\begin{itemize}
\item
For $\kappa\in(-\infty,-1)\cup(1,\infty)$,
one has:\footnote{We note that for $\kappa=-1/2$, one has $W_\kappa=-m$;
for $\kappa=1$, one has $\Omega_\kappa=m$.}
\quad
$\ds
\begin{cases}
\Lambda_\pm\in\R\setminus\{0\},&\omega\in(-m,\Omega_\kappa);
\\
\Lambda_\pm=0,&\omega=\Omega_\kappa;
\\
\Lambda_\pm\in\jj\R\setminus\{0\},&\omega\in(\Omega_\kappa,m).
\end{cases}
$

\item For $\kappa\in\{-1,0\}$ and any $\omega\in(-m,m)$,
one has $\Lambda_\pm=0$.

\item For $\kappa\in(-1,-1/3)$,\footnote{For $\kappa=-1/3$,
one has $\Omega_\kappa=-m$.}
one has:
\quad
$\ds
\begin{cases}
\Lambda_\pm\in\jj\R\setminus\{0\},&\omega\in(-m,\Omega_\kappa);
\\
\Lambda_\pm=0,&\omega=\Omega_\kappa;
\\
\Lambda_\pm\in\R\setminus\{0\},&\omega\in(\Omega_\kappa,m).
\\
\end{cases}
$

\item
For $\kappa\in[-1/3,0)$ and any $\omega\in(-m,m)$,
one has $\Lambda_\pm\in\jj\R\setminus\{0\}$.

\item
For $\kappa\in(0,1]$ and any $\omega\in(-m,m)$,
one has $\Lambda_\pm\in\R\setminus\{0\}$.
\end{itemize}

The analysis just given covers the cases of Theorem~\ref{theorem-sect2}~\itref{theorem-sect2-4-a}
and
Theorem~\ref{theorem-sect2}~\itref{theorem-sect2-4-f}.
All the remaining cases can be treated exactly the same way.
The explicit expression of eigenvalues \eqref{lambda-plus-minus}
remains the same; the ranges of $\kappa$ and $\omega$
are determined as before from the requirement that eigenvalues belong to the first Riemann sheet of $\Gamma(\Lambda)$
and that they are in $\jj(-m+\abs{\omega},m-\abs{\omega})$.

Once we know that there are exactly two zeros of the function $\Gamma(\Lambda)$
which could correspond to eigenvalues,
and therefore located on the real or on the imaginary axis
(let us mention that
the spectrum of $\bfA$
remains symmetric with respect to $\R$ and $\jj\R$
even after its restriction
onto $\bfX_{\mbox{\rm\footnotesize even-odd-even-odd}}$),
we can use
a simpler argument to locate the eigenvalues and threshold resonances. 
Due to continuous dependence of eigenvalues
on the parameters $\omega$ and $\kappa$
(except perhaps at $\omega=\mathcal{T}^{-}_\kappa$
and $\omega=2\Omega_\kappa$; see
Remark~\ref{remark-continuous}),
we know that for each
$\kappa\in\R\setminus(\{-1\}\cup[-1/3,1])$
at $\omega=\Omega_\kappa\in(-m,m)$
there is a collision of two eigenvalues
since the dimension of the generalized null space of $\bfA$
jumps at this value of $\omega$.
All we need to do is to find when these eigenvalues disappear
from the spectral gap
$\jj(-m+\abs{\omega},m-\abs{\omega})$;
that is, when the points
$\lambda=\pm\jj(m-\abs{\omega})$
become threshold eigenvalues or virtual levels.
Moreover, if these points become virtual levels, this means that
the corresponding $\nu_\pm$ changes the sign at this point,
so the zeros of $\Gamma(\Lambda)$ move onto
one of the unphysical sheets of its Riemann surface
(where at least one of $\Re\nu_{+}(\omega,\Lambda)$,
$\Re\nu_{-}(\omega,\Lambda)$ is negative),
becoming resonances (corresponding to antibound states).

\begin{lemma}\label{lemma-t-kappa}
The restriction of $\bfA(\omega,\kappa)$
onto $\bfX_{\mbox{\rm\footnotesize even-odd-even-odd}}$
has virtual levels at the thresholds of the essential spectrum
$\lambda=\pm\jj(m-\abs{\omega})$
at the following values of
$\omega\in(-m,m)\setminus\{0\}$
and $\kappa\in\R$:
\begin{enumerate}
\item
For $\kappa<-1$
or $\kappa>2^{-1/2}$,
there are virtual levels at $\lambda=\pm\jj(m-\omega)$
when $\omega=\mathcal{T}_\kappa^{+}:=\frac{(k+1)^2 m}{(3\kappa+2)\kappa}>0$.
\item
For $-1<\kappa<2^{-1/2}-1$,
there are virtual levels at $\lambda=\pm\jj(m+\omega)$
when $\omega=\mathcal{T}_\kappa^{-}:=\frac{(k+1)^2 m}{(\kappa+2)\kappa}<0$.
\end{enumerate}
\end{lemma}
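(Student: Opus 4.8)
The plan is to detect a virtual level at an inner threshold $\lambda=\pm\jj(m-\abs{\omega})$ by exhibiting, at that threshold, a nontrivial solution of $\bfA(\omega,\kappa)\Psi=\lambda\Psi$ in $\bfX_{\mbox{\rm\footnotesize even-odd-even-odd}}$ which is bounded but fails to be square-integrable; by definition of a virtual level this is all that needs to be checked. Recall from \eqref{def-m-n} that $\nu_{-}(\omega,\Lambda)$ vanishes at $\Lambda=\pm m-\omega$ while $\nu_{+}(\omega,\Lambda)$ vanishes at $\Lambda=\omega\pm m$, and that by Theorem~\ref{theorem-sect2}~\itref{theorem-sect2-7} the inner thresholds of the essential spectrum are $\Lambda=\pm(m-\abs{\omega})$. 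Hence for $\omega\in(0,m)$ the relevant threshold is $\Lambda=m-\omega$, at which \emph{both} $\nu_{-}=0$ and $S_{-}=0$, so one must use the regularized representation \eqref{psi-e-o-1} from Remark~\ref{remark-sometimes}; for $\omega\in(-m,0)$ the relevant threshold is $\Lambda=m+\omega$, at which only $\nu_{+}=0$ while $S_{+}=2m\neq 0$, so the first vector in \eqref{psi-x-e-o-e-o} degenerates to the bounded constant $(0,\,2m\sgn x,\,0,\,-2\jj m\sgn x)$ and no regularization is needed. In either case the thresholds $\pm(m-\abs{\omega})$ are exchanged by the symmetry $\Lambda\mapsto-\Lambda$ of the equation (cf.\ Theorem~\ref{theorem-sect2}~\itref{theorem-sect2-1}), so it suffices to analyse one of them in each case.

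For Part~1, take $\omega\in(0,m)$ and substitute $\Lambda=m-\omega$ into \eqref{psi-e-o-1}, where $\nu_{+}=2\sqrt{\omega(m-\omega)}>0$, $S_{+}=2(m-\omega)$, $\nu_{-}^{\mathrm{reg}}=\sqrt{2(m+\omega)}$ and $S_{-}^{\mathrm{reg}}=0$. Feeding this into the jump condition \eqref{Eq:SystemSandR-e-o-e-o}: since $S_{-}^{\mathrm{reg}}=0$ a nontrivial solution forces $c_1\neq 0$, and eliminating $c_2$ collapses the two scalar equations to $(1+2\kappa)\mu\nu_{+}=(1+\kappa)S_{+}$, that is, $(1+2\kappa)\sqrt{\omega/(m+\omega)}=1+\kappa$. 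Squaring yields $\omega=\frac{(\kappa+1)^2}{\kappa(3\kappa+2)}m=\mathcal{T}_\kappa^{+}$; keeping the constraint that $1+2\kappa$ and $1+\kappa$ have the same sign (so that the positive root is the one used) together with $\mathcal{T}_\kappa^{+}\in(0,m)$, which forces $\kappa(3\kappa+2)>0$ and $2\kappa^2>1$, isolates exactly $\kappa<-1$ or $\kappa>2^{-1/2}$. Finally, at $\omega=\mathcal{T}_\kappa^{+}$ one has $\mu\nu_{+}\neq S_{+}$ (otherwise $\omega=m+\omega$), so $c_2\neq 0$ and $\Psi$ is genuinely not in $L^2$: this is the asserted virtual level at $\lambda=\jj(m-\omega)$, and the one at $\lambda=-\jj(m-\omega)$ follows by the $\Lambda\mapsto-\Lambda$ symmetry.

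For Part~2, take $\omega\in(-m,0)$ and work at $\Lambda=m+\omega$, where $\nu_{+}=0$, $S_{+}=2m$, $\nu_{-}=2\sqrt{-\omega(m+\omega)}>0$, $S_{-}=-2\omega>0$, the first vector of \eqref{psi-x-e-o-e-o} being the bounded constant above. The jump condition \eqref{Eq:SystemSandR-e-o-e-o} with $\nu_{+}=0$ forces $c_2\neq 0$ for a nontrivial solution (as $S_{+}\neq 0$), and subtracting its two equations gives $S_{-}=(1+\kappa)\mu\nu_{-}$, i.e.\ $\sqrt{-\omega}=(1+\kappa)\sqrt{m-\omega}$; squaring gives $\omega=\frac{(\kappa+1)^2}{\kappa(\kappa+2)}m=\mathcal{T}_\kappa^{-}$. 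The sign constraint $\kappa>-1$ together with $\mathcal{T}_\kappa^{-}\in(-m,0)$, which requires $\kappa(\kappa+2)<0$ and $2\kappa^2+4\kappa+1<0$, pins down $-1<\kappa<2^{-1/2}-1$. One checks as before that then $c_1\neq 0$ (indeed $S_{+}c_1=\kappa\mu\nu_{-}c_2$ with $\kappa\neq 0$), so $\Psi$ is bounded but not $L^2$: a virtual level at $\lambda=\jj(m+\omega)$, hence also at $\lambda=-\jj(m+\omega)$.

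A more economical route avoids recomputing the jump conditions altogether: via \eqref{X-vs-Lambda}, a threshold solution at $\Lambda=m-\omega$ corresponds to the root $X=0$ of the quadratic in \eqref{equation}, hence to $c(\omega,\kappa)=0$, i.e.\ $\omega=\mathcal{T}_\kappa^{+}$; and, as already noted after \eqref{a-b-c}, the case $a(\omega,\kappa)=0$, i.e.\ $\omega=\mathcal{T}_\kappa^{-}$, is precisely when one of the roots $X_{\pm}$ equals $\kappa+1$ with $X_{\pm}^2=-\omega/(m-\omega)$, which corresponds to $\Lambda=m+\omega$; the admissibility conditions \eqref{conditions} and the $\kappa$-ranges recorded after \eqref{a-b-c} then reproduce the stated restrictions. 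In either approach the only step requiring real care is the bookkeeping of signs after squaring (to obtain the exact $\kappa$-ranges) together with the verification that the threshold solution is genuinely non-$L^2$, so that what one gets is a virtual level and not a threshold eigenvalue; I expect this sign and range analysis to be the main, though essentially routine, obstacle.
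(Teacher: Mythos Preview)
Your proof is correct and follows essentially the same route as the paper's: at the inner threshold you impose the jump/compatibility condition and solve for $\omega$, obtaining the same equations $(1+2\kappa)\sqrt{\omega/(m+\omega)}=1+\kappa$ (equivalently the paper's $(2\kappa+1)/(\kappa+1)=\sqrt{(m+\omega)/\omega}$, from \eqref{EQ1} at $\Lambda=m-\omega$) and $S_{-}=(1+\kappa)\mu\nu_{-}$ (equivalently the paper's $(\kappa+1)\mu\nu_{-}=-2\omega$, from $\Gamma(m+\omega)=0$). The sign bookkeeping and the resulting $\kappa$-ranges match.

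The one organizational difference is that you work from the raw jump system \eqref{Eq:SystemSandR-e-o-e-o} (and the regularized form \eqref{psi-e-o-1}) in both parts, whereas the paper uses the already-derived scalar relations \eqref{EQ1} and \eqref{condition-e-o-e-o}; this is a matter of presentation, not substance. Your ``economical route'' via the polynomial \eqref{equation}, identifying $\Lambda=m-\omega$ with $X=0$ (hence $c(\omega,\kappa)=0$) and $\Lambda=m+\omega$ with $X^{2}=-\omega/(m-\omega)$ (the root that appears when $a(\omega,\kappa)=0$), is a genuine shortcut: the paper records exactly these facts just after \eqref{a-b-c} but then, in the proof of the present lemma, reverts to the ``simpler argument'' via \eqref{EQ1} and $\Gamma$ rather than quoting them. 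Your verification that the threshold solution has the non-decaying component nonzero (so that it is a virtual level and not a threshold eigenvalue, cf.\ Lemma~\ref{lemma-no}) is a point the paper leaves implicit.
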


\begin{proof}
We first consider the case $\omega>0$.
Let us find when an imaginary eigenvalue touches the
essential spectrum at the endpoint
$\lambda=\jj(m-\omega)$.
(By Lemma~\ref{lemma-no},
the endpoints never correspond to eigenvalues.)
One needs
\[
\kappa^2
=
(\kappa+1)
\left(\kappa+1
-
\sqrt{\frac{m+\omega}{m-\omega}}
\sqrt{2\frac{m-\omega}{2\omega}}
\right)
=
(\kappa+1)
\left(\kappa+1
-
\sqrt{\frac{m+\omega}{\omega}}
\right).
\]
That is,
\begin{eqnarray}\label{bad-kappa}
2\kappa+1
=
(\kappa+1)
\sqrt{\frac{m+\omega}{\omega}}
,
\qquad
\frac{2\kappa+1}{\kappa+1}
=
\sqrt{1+\frac{m}{\omega}}.
\end{eqnarray}
We point out that if $\kappa\in(-1,-1/2)$,
the fraction on the left is strictly negative
while the square root is nonnegative; these values of $\kappa$
cannot correspond to virtual levels at the threshold
$\jj(m-\omega)$ with $\omega>0$.
The condition to have a virtual level or an eigenvalue
at some value $0<\omega<m$
takes the form
\[
\frac{2\kappa+1}{\kappa+1}>\sqrt{2},
\]
which leads to $\kappa>2^{-1/2}$.
Let us compute the value of $\omega$
corresponding to a virtual level:
\[
\frac{m}{\omega}=
\frac{(2\kappa+1)^2}{(\kappa+1)^2}-1
=\frac{(3\kappa+2)\kappa}{(\kappa+1)^2},
\]
hence the critical value of $\omega$ which corresponds to virtual levels
at the thresholds $\lambda=\pm \jj(m-\omega)$ is given by
$\omega=\mathcal{T}_\kappa^{+}$ from \eqref{def-tau-plus}.
We point out that these critical values correspond to the collision of eigenvalues
with the threshold points only if $\kappa<-1$ and $\kappa>2^{-1/2}$.

Now we consider the case $\omega<0$.
To find the value of $\omega$ which corresponds
to a bifurcation of an eigenvalue from the threshold
of the essential spectrum (that is, when there is
a virtual level or an eigenvalue at the threshold
$\lambda=\jj(m-\abs{\omega})=\jj(m+\omega)$),
we consider the equation $\Gamma(\Lambda)=0$,
with $\Lambda=m+\omega$
and with $\Gamma(\Lambda)$ defined in \eqref{def-gamma}.
Taking into account that $\nu_{+}(\omega,\Lambda)=0$
at $\Lambda=m+\omega$,
we are to solve the equation
$0=\Gamma(m+\omega)=2(\kappa+1)m\mu\nu_{-}+4m\omega$.
This leads to
$(\kappa+1)\mu\nu_{-}(\omega,\Lambda)=-2\omega$.
Since $\omega<0$,
we conclude that
the eigenvalue touches the threshold
$\lambda=\jj(m+\omega)$
(that is, there is a virtual level at threshold)
if $\kappa>-1$.
Squaring the above equation and substituting $\nu_{-}$, we arrive at
\[
(\kappa+1)^2\frac{m-\omega}{m+\omega}
(-4\omega m-4\omega^2)=4\omega^2,
\]
hence
$
(\kappa+1)^2(m-\omega)=-\omega,
$
which leads to the critical values
$\omega=\mathcal{T}^{-}_\kappa$ from \eqref{def-tau-minus}.
\end{proof}

\begin{lemma}\label{lemma-w-kappa}
One has
$\lambda=\jj\Lambda_\pm\to\pm\infty$
as
$\ds
\omega\to
\begin{cases}
2\Omega_\kappa-0,&\kappa<-1,
\\
2\Omega_\kappa+0,&-1<\kappa\le -1/2.
\end{cases}
$
\quad
\end{lemma}

\begin{proof}
The statement of the lemma
follows from \eqref{lambda-plus-minus},
\[
\Lambda_\pm
=\pm\frac{m^2-\omega^2}{2\Omega_\kappa-\omega}
\sqrt{\frac{\Omega_\kappa-\omega}{W_\kappa-\omega}}
=\pm\jj\frac{m^2-\omega^2}{2\Omega_\kappa-\omega}
\sqrt{\frac{\omega-\Omega_\kappa}{W_\kappa-\omega}}.
\]
We note that
$\Omega_\kappa$ from \eqref{def-Omega-kappa}
and $W_\kappa$ from \eqref{def-W-kappa}
satisfy
\[
\begin{cases}
0<\Omega_\kappa<2\Omega_\kappa<m,\qquad W_\kappa>m, &\kappa<-1,
\\[0.5ex]
-m\le 2\Omega_\kappa<\Omega_\kappa<0,\qquad W_\kappa\le -m,& -1<\kappa\le -1/2,
\end{cases}
\]
so that
$\lambda=\jj\Lambda_{\pm}$
are real and approach $\pm\infty$
as
$\omega\to 2\Omega_\kappa$
(cf. Remark~\ref{remark-continuous}).
\end{proof}

Lemmata~\ref{lemma-t-kappa} and~\ref{lemma-w-kappa}
complete the proof of
Theorem~\ref{theorem-sect2}~\itref{theorem-sect2-3} and \itref{theorem-sect2-4}.

\subsection{Multiplicity of zero eigenvalue and the Kolokolov condition}\label{Sec:Kolokolov}

Finally, let us prove
Theorem~\ref{theorem-sect2}~\itref{theorem-sect2-6}.
Let us consider
$\bfA(\omega,\kappa)$
in the invariant subspace
$\bfX\sb{\mbox{\rm\footnotesize even-odd-even-odd}}$
(see \eqref{def-eoeo}).
We notice that for the restriction of $\bfA(\omega,\kappa)$
onto this subspace
one has:
\begin{eqnarray}\label{a-jordan}
\bfA(\omega,\kappa)
\begin{bmatrix}0\\\phi_\omega\end{bmatrix}= \begin{bmatrix}0\\0\end{bmatrix},
\qquad
\bfA(\omega,\kappa)
\begin{bmatrix}\p_\omega\phi_\omega\\0\end{bmatrix}=\begin{bmatrix}0\\\phi_\omega \end{bmatrix}.
\end{eqnarray}

\begin{remark}
Let us verify the second relation in \eqref{a-jordan}.
First we have to check that $\p_\omega\phi_\omega\in \dom(L_+)$.
Using \eqref{solitary-wave}, we compute:
\[
\p_\omega\phi_\omega(x)
=\p_\omega
\left(
\alpha\begin{bmatrix}1\\\mu\sgn x\end{bmatrix}e^{-\varkappa(\omega)\abs{x}}
\right)
=
\left(
\begin{bmatrix}\p_\omega\alpha\\\p_\omega(\alpha\mu)\sgn x\end{bmatrix}
-\alpha\begin{bmatrix}1\\\mu\sgn x\end{bmatrix}\abs{x}\p_\omega\varkappa(\omega)
\right)
e^{-\varkappa(\omega)\abs{x}}.
\]
Notice that 
the second term in the brackets in the right-hand side
of the above
does not contribute to the validity of the boundary condition because it is continuous and vanishes at the origin,
while the contribution of the first term
yields the following:
\begin{eqnarray}
\label{dn}
\widehat{\p_\omega\phi_\omega}
=\begin{bmatrix}\p_\omega\alpha
\\ 0\end{bmatrix}
=\frac{\alpha}{2\kappa\mu}\p_\omega\mu\begin{bmatrix}1\\0\end{bmatrix},
\qquad
\left[\p_\omega\phi_\omega \right]_0
=\begin{bmatrix}0\\ 2\p_\omega(\alpha\mu)\end{bmatrix}
=\frac{\alpha(2\kappa+1)}{\kappa}\p_\omega\mu\begin{bmatrix}0\\1\end{bmatrix};
\end{eqnarray}
we used
\eqref{p-alpha-kappa}.
From these relations it follows that
\begin{align}
\jj\sigma_2[\p_\omega\phi_\omega]_0-2\mu(\sigma_3+2\kappa\Pi_1)
\widehat{\p_\omega\phi_\omega}
=\Big(
\alpha\frac{2\kappa+1}{\kappa}-\alpha\frac{2\kappa+1}{\kappa}
\Big)
\p_\omega\mu\begin{bmatrix}1\\ 0\end{bmatrix}
=\begin{bmatrix}0\\ 0\end{bmatrix}.
\end{align}
The previous relation entails that $\p_\omega\phi_\omega\in \dom(L_+)$
$\forall \kappa\neq 0$.
Given this premise, considering separately $x>0$ and $x<0$,
one directly verifies that
$L_+\phi_\omega=(D_m-\omega I_2)\p_\omega\phi_\omega=\phi_\omega$.
This ends the verification of the second relation in \eqref{a-jordan}.
\end{remark}

\begin{remark}\label{remark-nonzero}
We point out that
for $\p_\omega\phi_\omega$ to make sense,
one needs $\alpha$ to be differentiable with respect to $\omega$;
for this, as one can see from \eqref{p-alpha-kappa},
the condition $\kappa\ne 0$
imposed in \eqref{kappa-nonzero} is required;
see also Remark~\ref{remark-bad}.
\end{remark}

By \eqref{a-jordan},
we already know that
the generalized null space of $\bfA$ is at least
two-dimensional.
Whether there are more elements in the generalized null space of $\bfA$,
depends on whether there is a solution
$\theta\in L^2(\R,\C^2)$ to
\begin{eqnarray}\label{ltp}
L_{-}\theta=\p_\omega\phi_\omega,
\end{eqnarray}
so that
$\bfA(\omega,\kappa)\begin{bmatrix}0\\\theta
\end{bmatrix}=\begin{bmatrix}\p_\omega\phi_\omega\\0\end{bmatrix}$.
Since the range of $L_{-}$
is closed,
there is a solution to \eqref{ltp}
if and only if
its right-hand side,
$\p_\omega\phi_\omega$, is orthogonal to $\ker(L_{-}^*)=\ker(L_{-})$.
By Lemma~\ref{lemma-l},
the kernel of $L_{-}(\omega)=L(\omega,0)$
on
$\bfX\sb{\mbox{\rm\footnotesize odd-even}}$
is zero (since $\omega\ne 0$),
while its kernel on
$\bfX\sb{\mbox{\rm\footnotesize even-odd}}$
is spanned
by $\phi_\omega$.
Thus, the condition to have a solution to \eqref{ltp}
is given by
\[
\langle\phi_\omega,\p_\omega\phi_\omega\rangle
=\frac{1}{2}\p_\omega Q(\phi_\omega)=0.
\]
We conclude that whether there are more elements
in the generalized null space of $\bfA$,
depends on the Kolokolov condition
$\p_\omega Q(\phi_\omega)=0$ \cite{kolokolov-1973};
this condition gives the value of the threshold
$\omega=\Omega_\kappa$ at which the dimension
of the generalized null space
$\mathfrak{L}(\bfA(\omega,\kappa))$
changes.
Let us compute $\p_\omega Q(\phi_\omega)$.
For the
$L^2$-norm of a solitary wave profile
$\phi_\omega$
from \eqref{solitary-wave},
we have:
\[
Q(\phi_\omega)
=\alpha^2(1+\mu^2)
\int_{\R}
e^{-2\varkappa\abs{x}}\,dx
=\frac{\alpha^2(1+\mu^2)}{\varkappa}.
\]
Using the relations
\begin{eqnarray}\label{p-of}
\p_\omega\alpha=\frac{\alpha\p_\omega\mu}{2\kappa\mu},
\qquad
\p_\omega\varkappa=-\frac{\omega}{\varkappa},
\qquad
\p_\omega\mu=-\frac{m}{
(m+\omega)\varkappa
}
\end{eqnarray}
(see \eqref{def-def} and \eqref{p-alpha-kappa}),
we derive:
\begin{eqnarray}\label{q-prime}
&&\hskip -54pt
\p_\omega Q(\phi_\omega)
=
\frac{
2\alpha(1+\mu^2)\varkappa\p_\omega\alpha
+
2\alpha^2\varkappa\mu\p_\omega\mu
-
\alpha^2(1+\mu^2)\p_\omega\varkappa
}{\varkappa^2}
\nonumber
\\
&&
=
\frac{2m\alpha^2}{(m+\omega)\varkappa^2}
\Big(
-
\frac{m}{\varkappa \kappa}
-
\mu
+
\frac{\omega}{\varkappa}
\Big)
=
\frac{2m\alpha^2}{(m+\omega)\varkappa^3}
\Big(
-
\frac{m}{\kappa}
-
m+2\omega
\Big).
\end{eqnarray}
Thus, we reduce the Kolokolov condition
$\p_\omega Q(\phi_\omega)=0$
to the form
\begin{eqnarray}\label{omega-m-kappa}
\frac{\omega}{m}=\frac{1+\kappa}{2\kappa}.
\end{eqnarray}
We use this relation
to define the critical value $\Omega_\kappa$ in \eqref{def-Omega-kappa}
corresponding to the critical point of $Q(\phi_\omega)$.
We point out that there is no critical value
$\omega\in(-m,m)$ of $Q(\phi_\omega)$
for
$-1/3\le\kappa\le 1$
since in this case \eqref{def-Omega-kappa}
yields $\abs{\Omega_\kappa}\ge m$.

\begin{remark}
Let us point out that,
in the context of the nonlinear Dirac equation,
the sign of $\p_\omega Q(\phi_\omega)$
is not directly related to the spectral stability:
by Theorem~\ref{theorem-sect2}~\itref{theorem-sect2-4},
the spectral regions correspond to
$\omega<\Omega_\kappa$
for $\kappa<-1$ and $\kappa>2^{-1/2}-1$
(hence $\p_\omega Q(\phi_\omega)<0$ by \eqref{q-prime})
and to
$\omega>\Omega_\kappa$ for $-1<\kappa<2^{-1/2}-1$
(hence $\p_\omega Q(\phi_\omega)>0$).
\end{remark}

\begin{remark}
One may check explicitly that indeed
for $\kappa\ne 0$
the equation $L_{-}\theta=\p_\omega\phi_\omega$ has a solution $\theta\in L^2(\R,\C^2)$
if and only if $\p_\omega Q=0$.
We have:
\[
\p_\omega\phi_\omega
=\p_\omega
\left(
\alpha\begin{bmatrix}1\\\mu\sgn x\end{bmatrix}e^{-\varkappa\abs{x}}
\right)
=
\left(
\begin{bmatrix}\p_\omega\alpha\\\p_\omega(\alpha\mu)\sgn x\end{bmatrix}
-\alpha\begin{bmatrix}1\\\mu\sgn x\end{bmatrix}\abs{x}\p_\omega\varkappa
\right)
e^{-\varkappa\abs{x}}.
\]
The solution to
\begin{eqnarray}\label{l-theta}
\begin{bmatrix}
m-\omega&\p_x
\\-\p_x&-m-\omega
\end{bmatrix}
\theta(x)
=\p_\omega\phi_\omega(x)
\end{eqnarray}
has the form
\begin{eqnarray}\label{theta}
\theta(x)=
\begin{bmatrix}
B\abs{x}+C\abs{x}^2
\\
(E\abs{x}+F\abs{x}^2)\sgn x
\end{bmatrix}
e^{-\varkappa\abs{x}},
\qquad
x\in\R,
\qquad
B,\,C,\,E,\,F\in\C;
\end{eqnarray}
above, we took into account that
the operator $D_m-\omega I_2$ is invariant in
the subspace of functions with even first component
and odd second component.
There is no jump condition to worry about since
$\theta(x)$ vanishes at the origin.
Because of the spatial symmetry,
it suffices to consider $x>0$.
Substitution of \eqref{theta}
into \eqref{l-theta} leads to the system
\[
\begin{cases}
(m-\omega)(B x+C x^2)+E+2F x
-\varkappa(E x+F x^2)=\p_\omega\alpha-\alpha\p_\omega\varkappa x,
\\
-(B+2C x)+\varkappa(Bx+C x^2)-(m+\omega)(E x+F x^2)
=\p_\omega(\alpha\mu)
-\alpha\p_\omega\varkappa \mu x,
\end{cases}
\qquad
x>0.
\]
The above system allows us to express
$E=\p_\omega\alpha$
and
$F=\mu C$
(from the first equation)
and also
$B=-\p_\omega(\alpha\mu)$
(from the second equation),
and then we derive an overdetermined system
\[
\begin{cases}
-(m-\omega)\p_\omega(\alpha\mu)+2\mu C-\varkappa\p_\omega\alpha
=-\alpha\p_\omega\varkappa,
\\
-2C-\varkappa\p_\omega(\alpha\mu)-(m+\omega)\p_\omega\alpha
=-\alpha\mu\p_\omega\varkappa
\end{cases}
\]
with the only unknown $C\in\C$.
This system yields the compatibility condition
\[
2\varkappa\p_\omega\alpha
+(m-\omega)\alpha\p_\omega\mu-\frac{\alpha m}{m+\omega}\p_\omega\varkappa=0.
\]
Substituting the expression for
$\p_\omega \alpha$ from \eqref{p-alpha-kappa}
(note that
it is for the finiteness of $\p_\omega\alpha$
that we needed the condition $\kappa\ne 0$)
and using the relations \eqref{p-of},
one again arrives at \eqref{omega-m-kappa}.
\end{remark}
Notice that 
$
\begin{bmatrix}0\\\theta
\end{bmatrix}
$
is orthogonal to 
$
\begin{bmatrix}\phi_\omega\\0\end{bmatrix}
$
and hence the Jordan chain can be continued. Hence the algebraic multiplicity of $0$ jumps at least by $2$ when \eqref{omega-m-kappa} is satisfied.
As the matter of fact, it is exactly two,
since, as we have seen
in the proof of
of Theorem~\ref{theorem-sect2}~\itref{theorem-sect2-4},
when \eqref{omega-m-kappa} is not satisfied,
there are at most two
nonzero eigenvalues
of the restriction of
$\bfA(\omega,\kappa)$
onto
$\bfX\sb{\mbox{\rm\footnotesize even-odd-even-odd}}$.
As long as $a(\omega,\kappa)\ne 0$
(see \eqref{a-b-c}),
these eigenvalues
are locally continuous functions of parameters,
moving to $\pm\infty$ along the real axis
as $X$
defined in \eqref{X-vs-Lambda}
approaches 
$\pm \jj\sqrt{\frac{m+\omega}{m-\omega}}$
(cf. \eqref{Lambda-vs-X})
or equivalently
as $\omega$ approaches $2\Omega_\kappa$
(see Theorem~\ref{theorem-sect2}~\itref{theorem-sect2-4}).

We note that if
\eqref{omega-m-kappa} is satisfied
(that is, if $\omega=\Omega_\kappa$),
then, taking into account that $\kappa\ne 0$,
we see that the function
$a(\omega,\kappa)$ from \eqref{a-b-c}
takes the following form:
\[
a(\omega,\kappa) = m (\kappa+1)^2 -\omega  \kappa(\kappa+ 2)
=m(1+\kappa)^2
-m(1+\kappa)(2+\kappa)/2
=m(\kappa+\kappa^2)/2.
\]
For $\kappa\in\R\setminus\{0\}$,
the function
$a(\omega,\kappa)$
vanishes only when
$\kappa=-1$
(then $\omega=0$ by \eqref{omega-m-kappa});
so, outside of the point
$(\omega,\kappa)=(0,-1)$,
the value of
$X$
in \eqref{X-vs-Lambda}
is a continuous function of $\omega$ and $\kappa$
in an open neighborhood of the curve $\omega=\Omega_\kappa$.
If we consider $\Lambda$
in the disc $\mathbb{D}_\delta$ of some fixed radius $\delta>0$,
then one can see
from \eqref{Lambda-vs-X}
that $\Lambda$
is also a continuous function
of $\omega$ and $\kappa$.
It follows that
there could be 
at most two
simple eigenvalues $\pm\jj\Lambda$
colliding at $\lambda=0$,
hence the algebraic multiplicity of
eigenvalue $\lambda=0$ cannot jump by more than two.

\medskip

Now we consider
$\bfA(\omega,\kappa)$
in the invariant subspace
$\bfX\sb{\mbox{\rm\footnotesize odd-even-odd-even}}$
of $L^2(\R,\C^4)$
(see~\eqref{def-oeoe}).
By Theorem~\ref{theorem-sect2}~\itref{theorem-sect2-2},
the restriction of $\bfA(\omega,\kappa)$
to this subspace contains eigenvalue
$\lambda=0$
only when $\omega=0$,
with both the geometric and algebraic multiplicities
being equal to two.

\medskip

This completes the proof of Theorem~\ref{theorem-sect2}.

\section{Parity-preserving
perturbation of the Soler model
with concentrated nonlinearity}
\label{sect-nld-perturbation}

In this section we address by perturbative analysis
the effect of changing the Soler nonlinearity
by the term which breaks the $\mathbf{SU}(1,1)$-invariance
while preserving the parity:
the equation is invariant in subspaces
$\bfX_{\mbox{\rm\footnotesize even-odd-even-odd}}$
and
$\bfX_{\mbox{\rm\footnotesize odd-even-odd-even}}$
consisting of odd-even and in even-odd wave functions.

\smallskip

\noindent{\bf Model.}
We perturb the Soler model changing the Lagrangian density
\eqref{Lagrangian-density}
so that the self-interaction is based on the quantity
$\psi\sp\ast(\sigma_3+\epsilon I_2)\psi$, $\epsilon\ne 0$
(instead of $\psi\sp\ast\sigma_3\psi$);
now formally the dynamics is governed by the equation
\begin{eqnarray}\label{nld-point-perturbed-1}
\jj \p_t\psi=(\jj\sigma_2\p_x+\sigma_3 m)\psi
-\delta(x)f(\psi\sp\ast(\sigma_3+\epsilon I_2)\psi)
(\sigma_3+\epsilon I_2)\psi,
\quad
x\in\R,
\quad
t\in\R.
\end{eqnarray}
Above,
$f\in C(\R)\cap C^1(\R\setminus\{0\})$,
$f(0)=0$,
and
the following jump condition on $\psi$ is understood
(cf. \eqref{jumpeq}):
\begin{equation}\label{jumpeq_paritypres}
\jj\sigma_2[\psi]_{0}
=f(\hat\psi\sp\ast(\sigma_3+\epsilon I_2)\hat\psi)(\sigma_3+\epsilon I_2)\hat\psi.
\end{equation}
Just like \eqref{nld-point},
this is a Hamiltonian $\mathbf{U}(1)$-invariant system,
but for $\epsilon\ne 0$
it is no longer $\mathbf{SU}(1,1)$-invariant.

\smallskip

\noindent
{\bf Solitary waves.\ }
Like in \eqref{solitary-wave},
there are solitary wave solutions
$\phi_{\omega,\epsilon}(x)e^{-\jj\omega t}$
to \eqref{nld-point-perturbed-1}
with
\[
\phi_{\omega,\epsilon}(x)
=\alpha(\omega,\epsilon)
\begin{bmatrix}1\\\mu\sgn x\end{bmatrix}
e^{-\varkappa\abs{x}}
\]
and with $\varkappa$, $\mu$ from \eqref{def-def}.
Without loss of generality,
we may assume that $\alpha(\omega,\epsilon)>0$.
The value of $\alpha(\omega,\epsilon)$
is to satisfy the jump condition
\eqref{jumpeq_paritypres}
with
$
[\phi_{\omega,\epsilon}]_0=
2\begin{bmatrix}0\\\alpha\mu(\omega)\end{bmatrix}$,
$\hat\phi_{\omega,\epsilon}=
\begin{bmatrix}\alpha\\0\end{bmatrix}$,
which leads to
$
2\jj\sigma_2\begin{bmatrix}0\\\alpha\mu(\omega)\end{bmatrix}
=
(\sigma_3+\epsilon I_2)
f
\begin{bmatrix}\alpha\\0\end{bmatrix},
$
resulting in
\begin{eqnarray}\label{alpha-mu}
2\mu(\omega)=(1+\epsilon)f(\tau),
\qquad
\tau:=\phi_{\omega,\epsilon}\sp\ast(\sigma_3+\epsilon I_2)
\phi_{\omega,\epsilon}\at{x=0}
=(1+\epsilon)\alpha^2.
\end{eqnarray}

\smallskip

\noindent
{\bf Linearization.\ }
Let us consider the linearization at a solitary wave.
Using the Ansatz
\[
\psi(t,x)=(\phi_{\omega,\epsilon}(x)+\,r(t,x)+\jj s(t,x))e^{-\jj\omega t},
\qquad
r(t,x),\,s(t,x)\in\R^2;
\]
we derive that the perturbation
$(r(t,x),\,s(t,x))$
satisfies the following system (where we omit explicit and repetitive domain definition):
\[
\begin{cases}
-\dot s=D_m r-\omega r
-f\delta(x)(\sigma_3+\epsilon I_2)r
-2g\delta(x)(\phi_{\omega,\epsilon}\sp\ast(\sigma_3+\epsilon I_2)r)(\sigma_3+\epsilon I_2)\phi_{\omega,\epsilon}
=:L_{+}(\epsilon)r,
\\[1ex]
\dot r=D_m s-\omega s
-f\delta(x)(\sigma_3+\epsilon I_2)s
=:L_{-}(\epsilon)s,
\end{cases}
\]
where
\begin{eqnarray}\label{def-f-g-ch3}
f=f(\tau),
\qquad
g=f'(\tau)
\end{eqnarray}
are evaluated at $\tau$ from
\eqref{alpha-mu}.
Explicitly,
\begin{eqnarray*}
L_{-}(\epsilon)s
&=&
D_m s-\omega s
-f\delta(x)(\sigma_3+\epsilon I_2)s,
\\[2ex]
L_{+}(\epsilon)r
&=&
(D_m-\omega)r
-f\delta(x)(\sigma_3+\epsilon I_2)r
-2g\delta(x)\phi_{\omega,\epsilon}^*(\sigma_3+\epsilon I_2)r(\sigma_3+\epsilon I_2)\phi_{\omega,\epsilon}
\\
&=&
(D_m-\omega)r
-f\delta(x)(\sigma_3+\epsilon I_2)r
-2\alpha g\delta(x)(1+\epsilon)r_1
\begin{bmatrix}
(1+\epsilon)\alpha\\0
\end{bmatrix}
\\
&=&
D_m r-\omega r
-f\delta(x)(\sigma_3+\epsilon I_2)r
-2(1+\epsilon)^2 g\alpha^2\delta(x)\Pi_1 r,
\end{eqnarray*}
with $\Pi_1$ from \eqref{def-pi1-pi2}
and with
with $f$, $g$ from \eqref{def-f-g-ch3}.
Thus, the linearization operator is given by
\begin{eqnarray}\label{def-a-not-broken}
\bfA(\epsilon)
=
\begin{bmatrix}
0&D_m-\omega I_2-f\delta(x)(\sigma_3+\epsilon I_2)
\\-D_m+\omega I_2
+
\delta(x)(f\sigma_3+f\epsilon I_2
+2(1+\epsilon)^2 g\alpha^2\Pi_1)
)
&0
\end{bmatrix}.
\end{eqnarray}

We are going to prove that
there are no unstable eigenvalues
bifurcating from $\pm 2\omega\jj$ for $\epsilon\ne 0$.
We first notice that both
$L\sb\pm$ are invariant in the subspace
of $L^2(\R,\C^2)$
consisting of odd-even (and, similarly, even-odd) functions (see Remark \ref{remark-inv} and equation \eqref{domH_linear}).
Since the eigenvalues bifurcating from $\pm 2\omega\jj$ correspond to the invariant subspace
$\bfX_{\mbox{\rm\footnotesize odd-even-odd-even}}$ of $\bfA$
(see \eqref{def-x-x}),
which is also an invariant subspace for $\bfA(\epsilon)$,
it is enough to consider this operator in this subspace only.
(As in the even-odd-even-odd subspace analysis
in Section~\ref{sect-spectrum-eo},
the spectrum of the restriction of $\bfA(\epsilon)$ 
on the invariant subspace
$\bfX_{\mbox{\rm\footnotesize even-odd-even-odd}}$
contains no eigenvalues in the vicinity of the essential spectrum
except possibly near the thresholds $\jj(\pm m\pm\omega)$.)
Moreover, the restrictions of
$L_{-}(\epsilon)$ and $L_{+}(\epsilon)$
onto odd-even spaces are equal,
therefore
\[
\bfA(\epsilon)\big\vert\sb{\bfX\sb{\mbox{\rm\footnotesize odd-even-odd-even}}}
=
\begin{bmatrix}0&L_{-}(\epsilon)\\-L_{-}(\epsilon)&0\end{bmatrix}
=\begin{bmatrix}0&1\\-1&0\end{bmatrix}\otimes L_{-}(\epsilon)
\]
has purely imaginary spectrum.
Let us give a more accurate argument.

\begin{theorem}\label{theorem-instability}
There is
$\omega_0\in(0,m)$ and an open neighborhood $U\subset\R$,
$U\ni 0$,
such that for
$\omega\in(\omega_0,m)$ and $\epsilon\in U$
the operator $\bfA(\epsilon)$ has two eigenvalues
$
\lambda(\epsilon)=\pm\jj(2\omega+\zeta(\epsilon))$,
$\zeta(\epsilon)\in\R\quad \forall\epsilon\in U$,
$\lim\sb{\epsilon\to 0}\zeta(\epsilon)=0$.
\end{theorem}

\begin{proof}
To study
whether $\lambda(\epsilon)=\jj\Lambda(\epsilon)$
is an eigenvalue of the operator $\bfA(\epsilon)$
from \eqref{def-a-not-broken},
we consider the action of $\bfA(\epsilon)-\jj\Lambda(\epsilon) I_4$
onto the superposition
\begin{eqnarray}\label{above-Ansatz}
\Psi(x)
=
a
\begin{bmatrix}
\nu_{+}\sgn x\\S_{+}\\-\jj\nu_{+}\sgn x\\-\jj S_{+}
\end{bmatrix}e^{-\nu_{+}\abs{x}}
+
b
\begin{bmatrix}
-\jj\xi\sgn x\\S_{-}\\\xi\sgn x\\\jj S_{-}
\end{bmatrix}e^{\jj\xi\abs{x}}
+
c
\begin{bmatrix}
\nu_{+}\\S_{+}\sgn x\\-\jj\nu_{+}\\-\jj S_{+}\sgn x
\end{bmatrix}e^{-\nu_{+}\abs{x}}
+
d
\begin{bmatrix}
-\jj\xi\\S_{-}\sgn x\\\xi\\\jj S_{-}\sgn x
\end{bmatrix}e^{\jj\xi\abs{x}},
\end{eqnarray}
with $S_{+}=S_{+}(\omega,\Lambda)$ and $S_{-}=S_{-}(\omega,\Lambda)$ from \eqref{def-r-s}
and with $\nu_{+}$ and $\xi$
defined by
\begin{equation}\label{def-kappa-xi}
\nu_{+}(\omega,\Lambda)=\sqrt{m^{2}-(\Lambda-\omega)^{2}},
\qquad
\xi(\omega,\Lambda)=-\sqrt{(\omega+\Lambda)^{2}-m^{2}}
\end{equation}
(cf. \eqref{def-m-n});
we consider $\lambda=\jj\Lambda$ in the first quadrant,
so that $\Lambda$ (and thus $\zeta$)
has non-positive imaginary part;
then, for $\omega$ sufficiently close to $m$,
\[
\Re\xi
=-\Re((3\omega+\zeta)^2-m^2)^{1/2}
=-\Re(9\omega^2-m^2+6\omega\zeta+\zeta^2)^{1/2}
=-\sqrt{9\omega^2-m^2}
+\mathcal{O}(\zeta)
<0,
\]
\[
\Im\xi
=-(2\sqrt{9\omega^2-m^2})^{-1}6\omega\Im\zeta
+\mathcal{O}(\zeta)\Im\zeta
\ge 0.
\]
Note that the first two terms in
\eqref{above-Ansatz}
are obtained from \eqref{psi-o-e} by substituting
$\nu_{-}(\omega,\Lambda)$ with $-\jj\xi(\omega,\Lambda)$
(both expressions have positive real part)
and correspond to perturbations from
the invariant subspace $\bfX_{\mbox{\rm\footnotesize odd-even-odd-even}}$;
the last two terms
correspond to perturbations
from the invariant subspace $\bfX_{\mbox{\rm\footnotesize even-odd-even-odd}}$.
The relation $(\bfA-\jj\Lambda I_4)\Psi=0$
leads to the following jump condition:
\begin{eqnarray}\label{j-c-0}
\begin{cases}
2(-\jj S_{+} c
+\jj S_{-} d)
-(1+\epsilon)(-\jj\nu_{+} c+\xi d)f
=0
\\-2(-\jj\nu_{+} a+\xi b)
+(1-\epsilon)(-\jj S_{+} a+\jj S_{-}b)f
=0
\\-2(S_{+} c+S_{-} d)
+\big((1+\epsilon)f+2 g\alpha^2(1+\epsilon)^2\big)(\nu_{+} c-\jj\xi d)=0
\\2(\nu_{+} a-\jj\xi b)-(1-\epsilon)(S_{+} a+S_{-} b)f=0.
\end{cases}
\end{eqnarray}
As in the case of the unperturbed operator $\bfA$ (see \eqref{def-a-ch2}),
there are two invariant subspaces
of $\bfA(\epsilon)$
defined in \eqref{def-x-x}:
$\bfX_{\mbox{\rm\footnotesize even-odd-even-odd}}$
corresponding to $a=b=0$
and
$\bfX_{\mbox{\rm\footnotesize odd-even-odd-even}}$
corresponding to $c=d=0$
(note that the system \eqref{j-c-0} does not mix $a,\,b$ and $c,\,d$).
We are interested in the deformation of eigenvalues $\pm 2\omega\jj$
corresponding to $\bfX\sb{\mbox{\rm\footnotesize odd-even-odd-even}}$.

\bigskip

\noindent
$\bullet$
The spectrum of
$\bfA(\epsilon)$ restricted onto
$\bfX\sb{\mbox{\rm\footnotesize even-odd-even-odd}}$.
We do not need to consider this case
since $\bfA(0)$ restricted onto
$\bfX\sb{\mbox{\rm\footnotesize even-odd-even-odd}}$
only has
isolated purely imaginary eigenvalues,
which have to stay on imaginary axes because of the symmetries
\eqref{symmetry}.
For completeness, we mention that
in this case
the jump condition \eqref{j-c-0} takes the form
\[
\begin{cases}
2(-\jj S_{+} c+\jj S_{-} d)
-(1+\epsilon)(-\jj\nu_{+} c+\xi d)f
=0
\\-2(S_{+} c+S_{-} d)
+\big((1+\epsilon)f+2g\alpha^2(1+\epsilon)^2\big)(\nu_{+} c-\jj\xi d)=0,
\end{cases}
\]
and the compatibility condition
for having a nontrivial solution $c,\,d\in\C$
is given by
\[
\det
\begin{bmatrix}
-2\jj S_{+}+\jj(1+\epsilon)f\nu_{+}
&
2\jj S_{-}
-(1+\epsilon)f\xi
\\
-2 S_{+}
+\big((1+\epsilon)f+2 g\alpha^2(1+\epsilon)^2\big)\nu_{+}
&
-2 S_{-}
-\big((1+\epsilon)f+2 g\alpha^2(1+\epsilon)^2\big)\jj\xi
\end{bmatrix}
=0.
\]

\noindent
$\bullet$
The spectrum of
$\bfA(\epsilon)$ restricted onto
$\bfX\sb{\mbox{\rm\footnotesize odd-even-odd-even}}$.
The jump condition \eqref{j-c-0} takes the form
\[
\begin{cases}
-2(-\jj\nu_{+} a+\xi b)
+(1-\epsilon)(-\jj S_{+} a+\jj S_{-}b)f
=0
\\2(\nu_{+} a-\jj\xi b)-(1-\epsilon)(S_{+} a+S_{-} b)f=0.
\end{cases}
\]
The compatibility condition is:
\[
\det
\begin{bmatrix}
2\jj\nu_{+}-\jj(1-\epsilon)S_{+} f
&
-2\xi
+\jj(1-\epsilon)S_{-} f
\\2\nu_{+}-(1-\epsilon)S_{+} f
&
-2\jj\xi-(1-\epsilon)S_{-} f
\end{bmatrix}
=
-2\jj (2\nu_{+}-(1-\epsilon)S_{+} f)
(2\jj\xi+(1-\epsilon)S_{-} f)=0.
\]
The deformation of the eigenvalue $2\omega\jj$
corresponds to vanishing of the first factor;
thus,
$
\nu_{+}=\frac{1}{2}(1-\epsilon)S_{+} f;
$
squaring this relation, we arrive at
$
m^2-(\omega+\zeta)^2
=(1-\epsilon)^2
(m+\omega+\zeta)^2 f^2/4$.
This allows us to write
\[
-
\Big(
2\omega-\frac{1}{2}(1-\epsilon)^2(m+\omega)f^2+\frac14(1-\epsilon)^2\zeta^2
\Big)\zeta
=\frac{1}{4}(1-\epsilon)^2
(m+\omega)^2 f^2
-m^2+\omega^2.
\]
Using \eqref{alpha-mu},
we arrive at
\[
-
\Big(
2\omega-(1-\epsilon)^2(m+\omega)
\frac{2\mu^2}{(1+\epsilon)^2}
\Big)\zeta+\frac{(1-\epsilon)^2\zeta^2}{4}
=\frac{(1-\epsilon)^2(m+\omega)^2 \mu^2}{(1+\epsilon)^2}
-m^2+\omega^2
=-\frac{4(m^2-\omega^2)\epsilon}{(1+\epsilon)^2}.
\]
This relation shows that,
for $\abs{\epsilon}$ small enough,
there is a real-valued solution
$
\zeta=
\big(2+\mathcal{O}(\epsilon)\big)
\epsilon(m^2-\omega^2)/m.
$
This completes the proof
of Theorem~\ref{theorem-instability}.
\end{proof}

\section{Broken parity perturbation of the Soler model
with concentrated nonlinearity}
\label{sect-nld-perturbation-broken-parity}

Now
we consider the perturbation
that breaks not only the $\mathbf{SU}(1,1)$ symmetry of the 
Soler model,
but also the parity symmetry:
the linearized equation is no longer invariant
in the subspaces
$\bfX_{\mbox{\rm\footnotesize even-odd-even-odd}}$
and
$\bfX_{\mbox{\rm\footnotesize odd-even-odd-even}}$
of $L^2(\R,\C^4)$,
consisting of even-odd-even-odd and odd-even-odd-even components.
We show that under this perturbation
the weakly relativistic solitary waves
become linearly unstable:
the spectrum of the corresponding linearization
contains the eigenvalues with positive real part;
these eigenvalues bifurcate from $\pm 2\omega\jj$
(see Theorem \ref{theorem-instability-broken}).

\smallskip

\noindent{\bf Model.\ }
We perturb the Soler model
so that the self-interaction term
in the Lagrangian density \eqref{Lagrangian-density}
depends on
\begin{eqnarray}\label{perturbation-sigma-1}
\psi\sp\ast(\sigma_3+\epsilon \sigma_1)\psi,
\end{eqnarray}
$\epsilon\ne 0$,
so that the dynamics is described formally by the equation
\begin{eqnarray}\label{nld-point-perturbed-2}
\jj \p_t\psi=(\jj\sigma_2\p_x+\sigma_3 m)\psi
-\delta(x)f(\psi\sp\ast(\sigma_3+\epsilon\sigma_1)\psi)
(\sigma_3+\epsilon\sigma_1)\psi,
\qquad
x\in\R,
\quad
t\in\R,
\end{eqnarray}
with the pure power nonlinearity
\begin{eqnarray}\label{pp}
f(\tau)=|\tau|^\kappa,
\qquad
\tau\in\R,
\qquad
\kappa>0.
\end{eqnarray}
The following boundary condition for domain elements
is assumed in this section
(see \eqref{psi-hat}, \eqref{psi-zero}, and \eqref{jumpeq}):
\begin{equation}\label{jumpeq_paritymotpres}
\jj\sigma_2[\psi ]_{0}
-f(\hat\psi\sp\ast(\sigma_3+\epsilon \sigma_1)\hat\psi)(\sigma_3+\epsilon\sigma_1)\hat\psi=0.
\end{equation}
Equation \eqref{nld-point-perturbed-2}
is a Hamiltonian $\mathbf{U}(1)$-invariant system
which is no longer $\mathbf{SU}(1,1)$-invariant.
We will show that the perturbation
\eqref{perturbation-sigma-1}
breaks the parity symmetry:
components of the solitary waves are no longer
even or odd,
and the linearization operator at a solitary wave
is no longer invariant
in $\bfX\sb{\mbox{\rm\footnotesize even-odd-even-odd}}$
or $\bfX\sb{\mbox{\rm\footnotesize odd-even-odd-even}}$.

\smallskip

\noindent
{\bf Solitary waves.\ }
The first step of the analysis is to construct solitary waves.
Instead of \eqref{solitary-wave},
the amplitude is now to be
a linear combination of \eqref{s-w-1}
and \eqref{s-w-2}
(since the equation is no longer invariant
in \eqref{def-x-e-o-o-e}), that is,
the solitary waves are now of the form
\begin{equation}\label{phibroken}
\varPhi_{\omega,\epsilon}(x)
=
\left(
\alpha(\omega,\epsilon)\begin{bmatrix}1\\\mu\sgn x\end{bmatrix}
+
\beta(\omega,\epsilon)\begin{bmatrix}\sgn x\\\mu\end{bmatrix}
\right)
e^{-\varkappa\abs{x}},
\end{equation}
with $\varkappa$, $\mu$ from
\eqref{def-def}.
The conditions on $\alpha(\omega,\epsilon)$ and $\beta(\omega,\epsilon)$
come from the jump condition
(cf. \eqref{jumpeq})
\begin{eqnarray}\label{j-c}
\jj\sigma_2
\begin{bmatrix}2\beta\\2\alpha\mu
\end{bmatrix}
-(\sigma_3+\epsilon\sigma_1)f
\begin{bmatrix}\alpha\\\beta\mu\end{bmatrix}=0,
\end{eqnarray}
where
$f=f(\tau)$
with
$\tau:=\hat\varPhi_{\omega,\epsilon}\sp\ast(\sigma_3+\epsilon\sigma_1)
\hat\varPhi_{\omega,\epsilon}$
and
$\hat\varPhi_{\omega,\epsilon}=
\begin{bmatrix}
\alpha\\\beta\mu
\end{bmatrix}
$.
The jump condition \eqref{j-c} takes the form of the following system:
\begin{eqnarray}\label{f-a-b}
\begin{cases}
(f-2\mu)\alpha+f\epsilon\mu \beta=0,
\\f\epsilon \alpha+(2-f\mu)\beta=0.
\end{cases}
\end{eqnarray}
One can see from \eqref{f-a-b} that both $\alpha$ and $\beta$
can simultaneously be chosen real;
without loss of generality, we assume that
\[
\alpha(\omega,\epsilon)>0,
\qquad
\beta(\omega,\epsilon)\in\R.
\]
The compatibility condition leads to
$
0=f^{2} \epsilon^2\mu-(f-2\mu)(2-f\mu)
=f^2\mu(1+\epsilon^{2})-2(1+\mu^2)f+4\mu$,
hence
$
f=\big(1+\mu^2\pm\sqrt{1-2\mu^2+\mu^4-4\mu^2\epsilon^2}\big)/(\mu +\mu\epsilon^2).
$
We need to choose the negative sign at the square root,
so that
$f=2\mu+\mathcal{O}(\epsilon^2)$;
then we are consistent with the case
$\epsilon=0$,
$\omega\in(0,m)$ 
(see \eqref{a-b-f}).
Therefore, one has:
\begin{eqnarray}\label{f-is-f}
&&
f
=
\frac{
1+\mu^2-\sqrt{1-2\mu^2+\mu^4-4\mu^2\epsilon^2}
}{(1+\epsilon^2)\mu}
=\frac{1}{(1+\epsilon^2)\mu}
\left(
1+\mu^2-(1-\mu^2)\sqrt{1-\frac{4\mu^2\epsilon^2}{(1-\mu^2)^2}}
\right)
\nonumber
\\
&&\quad =\frac{2}{1+\epsilon^2}
\Big(
\mu
+\frac{\mu\epsilon^2}{1-\mu^2}
+\mathcal{O}(\epsilon^4\mu^3)
\Big)
=\frac{2}{(1+\epsilon^2)(1-\mu^2)}
\left(
\mu-\mu^3
+\mu\epsilon^2
+\mathcal{O}(\epsilon^4\mu^3)
\right)
\nonumber
\\
&&\quad =
\frac{2}{(1+\epsilon^2)(1-\mu^2)}
\left(
\mu-\mu^3
+\mu\epsilon^2
-\mu^3\epsilon^2
+\mu^3\epsilon^2
+\mathcal{O}(\epsilon^4\mu^3)
\right)
=
2\mu
\left(1+\mathcal{O}(\epsilon^2\mu^2)\right).
\end{eqnarray}
The second equation from \eqref{f-a-b} yields:
\begin{eqnarray}\label{bbroken}
\beta=-\frac{f\epsilon\alpha}{2-f\mu}
=
-
\frac{
2\mu(1+\mathcal{O}(\epsilon^2\mu^2)
)\epsilon\alpha}{
2-
2\mu^2(1+\mathcal{O}(\epsilon^2\mu^2))
}
=-\frac{\epsilon\mu}{1-\mu^2}(1
+\mathcal{O}(\epsilon^2\mu^2)
)\alpha.
\end{eqnarray}
For future use, we compute:
\begin{eqnarray}\label{alpha-beta}
\alpha-\frac{\beta\mu}{\epsilon}
=
\Big(1+
\frac{\mu^2}{1-\mu^2}
(1+\mathcal{O}(\epsilon^2\mu^2))
\Big)\alpha,
\end{eqnarray}
and by \eqref{phibroken} one has
\begin{eqnarray}\label{tau-ch4}
\tau
&=&\varPhi_{\omega,\epsilon}\sp\ast\at{x=0}(\sigma_3+\epsilon\sigma_1)
\varPhi_{\omega,\epsilon}\at{x=0}
=
\begin{bmatrix}\alpha&\beta\mu\end{bmatrix}
\begin{bmatrix}1&\epsilon\\\epsilon&-1\end{bmatrix}
\begin{bmatrix}\alpha\\\beta\mu\end{bmatrix}
=\alpha^2+2\epsilon\alpha\beta\mu-\beta^2\mu^2
\\
\nonumber
&=&
\Big(1
-2\epsilon^2\frac{\mu^2}{1-\mu^2}
(1+\mathcal{O}(\epsilon^2\mu^2))
-\epsilon^2\frac{\mu^2}{(1-\mu^2)^2}
(1+\mathcal{O}(\epsilon^2\mu^2))
\mu^2
\Big)\alpha^2
=
\big(1+\mathcal{O}(\epsilon^2\mu^2)
\big)\alpha^2.
\end{eqnarray}
Combining the above expression for $\tau$
with the relation \eqref{f-is-f} satisfied by $f$,
we derive:
\begin{eqnarray}\label{abroken}
2\mu\big(1+\mathcal{O}(\epsilon^2\mu^2)\big)
=f=\abs{\tau}^\kappa
=
\alpha^{2\kappa}
(1+\mathcal{O}(\epsilon^2\mu^2)),
\qquad
\alpha=
(2\mu)^{\frac{1}{2\kappa}}
\big(1+\mathcal{O}(\epsilon^2\mu^2)\big).
\end{eqnarray}
The solitary wave
is given by the expression \eqref{phibroken} with $\alpha$ and $\beta$
from \eqref{abroken} and \eqref{bbroken}.

\smallskip

\noindent{\bf Linearization.\ }
Let us consider the linearization at the solitary wave
\eqref{phibroken}.
We use the Ansatz
\begin{eqnarray}\label{Ansatz-br}
\psi(t,x)=(\varPhi_{\omega,\epsilon}(x)+r(t,x)+\jj s(t,x))e^{-\jj\omega t},
\qquad
\big(r(t,x),\,s(t,x)\big)\in \R^2\times\R^2.
\end{eqnarray}
A substitution of the Ansatz
\eqref{Ansatz-br}
into equation \eqref{nld-point-perturbed-2}
shows that the perturbation
$(r(t,x),\,s(t,x))$
satisfies the following linearized system:
\[
\begin{cases}
-\dot s=(D_m-\omega) r
-f\delta(x)(\sigma_3+\epsilon \sigma_1)r
-2g\delta(x)
(\varPhi_{\omega,\epsilon}\sp\ast(\sigma_3+\epsilon \sigma_1)r)(\sigma_3+\epsilon \sigma_1)\varPhi_{\omega,\epsilon}
=:L_{+}(\epsilon)r,
\\[1ex]
\dot r=\left(D_m -\omega\right) s
-f\delta(x)(\sigma_3+\epsilon \sigma_1)s
=:L_{-}(\epsilon)s.
\end{cases}
\]
Above (cf. \eqref{tau-ch4}),
\begin{eqnarray}\label{def-f-g-ch4}
f=f(\tau),
\qquad
g=f'(\tau),
\qquad
\tau:=\varPhi_{\omega,\epsilon}\sp\ast
(\sigma_3+\epsilon \sigma_1)\varPhi_{\omega,\epsilon}\at{x=0}
=(\alpha+\epsilon\beta\mu)^2
-\beta^2\mu^2
+\mathcal{O}(\epsilon^4).
\end{eqnarray}
Thus, we have:
\begin{eqnarray}
L_{+}r
&=&
(D_m-\omega)r
-f\delta(x)(\sigma_3+\epsilon\sigma_1)r
-2g\delta(x)(\varPhi_{\omega,\epsilon}\sp\ast(\sigma_3+\epsilon\sigma_1)r)
(\sigma_3+\epsilon\sigma_1)\varPhi_{\omega,\epsilon}
\nonumber
\\
&=&(D_m-\omega)r
-f\delta(x)\cdot(\sigma_3+\epsilon\sigma_1)r
-2g\delta(x)(\alpha (r_1+\epsilon r_2)+\beta\mu(-r_2+\epsilon r_1))
\begin{bmatrix}
\alpha+\epsilon\beta\mu
\\\epsilon\alpha-\beta\mu
\end{bmatrix}
\nonumber
\\
&=&(D_m-\omega)r
-f\delta(x)(\sigma_3+\epsilon\sigma_1)r
-2g\delta(x)((\alpha+\epsilon\beta\mu)r_1+(\alpha\epsilon-\beta\mu)r_2)
\begin{bmatrix}
\alpha+\epsilon\beta\mu
\\\epsilon\alpha-\beta\mu
\end{bmatrix}
\nonumber
\\
&=&
\Big(
D_m-\omega
-f\delta(x)(\sigma_3+\epsilon\sigma_1)
-\delta(x)
\big[
X\Pi_1 
+\epsilon Y\sigma_1 
+\epsilon^2 Z\Pi_2
\big]
\Big)r,
\end{eqnarray}
where $\Pi_1$, $\Pi_2$ are the projectors
from \eqref{def-pi1-pi2}
and the quantities
$X,\,Y,\,Z\in\R$
defined by
\begin{eqnarray}\label{def-x-y-z}
X=2(\alpha+\epsilon\beta\mu)^2g,
\qquad
Y=2
(\alpha+\epsilon\beta\mu)\Big(\alpha-\frac{\beta\mu}{\epsilon}\Big)g
,
\qquad
Z=2\Big(\alpha-\frac{\beta\mu}{\epsilon}\Big)^2g.
\end{eqnarray}
In the pure power case, by \eqref{f-is-f},
one has
\[
\tau g
=\tau f'(\tau)
=\kappa f(\tau)
=2\kappa\mu(1+\mathcal{O}(\epsilon^2\mu^2)),
\]
with $\tau$ from \eqref{tau-ch4};
hence,
using
\eqref{alpha-beta} in \eqref{def-x-y-z},
we have the following estimates:
\begin{eqnarray}\label{x-y-z-such}
X=4\kappa\mu(1+\mathcal{O}(\epsilon^2\mu^2)),
\ \quad
Y=4\kappa\mu(1+\mathcal{O}(\mu)),
\ \quad
Z=4\kappa\mu(1+\mathcal{O}(\mu)).
\end{eqnarray}
We denote
\begin{eqnarray}\label{def-f-f}
F=f+Y
=
f+4\kappa\mu(1+\mathcal{O}(\mu))
=2(1+2\kappa)\mu(1+\mathcal{O}(\mu)),
\end{eqnarray}
so that
$
\begin{array}{l}
L_{+}
=
D_m-\omega
-\delta(x)(f\sigma_3
+F\epsilon\sigma_1
+X\Pi_1 
+\epsilon^2 Z\Pi_2)
$
and
$
L_{-}
=
D_m-\omega
-f\delta(x)(\sigma_3+\epsilon\sigma_1).
\end{array}
$
Now we can write
$\bfA(\epsilon)=\begin{bmatrix}
0&L_{-}\\-L_{+}&0\end{bmatrix}$
in the explicit form
as
\begin{eqnarray}\label{def-a-broken}
\bfA(\epsilon)
=
\begin{bmatrix}
0&\!\!\!\!\!\!
D_m-\omega I_2-f\delta(x)(\sigma_3+\epsilon\sigma_1)
\\-D_m+\omega I_2
+\delta(x)
\big(
f\sigma_3+\epsilon F\sigma_1
+X\Pi_1
+Z\epsilon^2
\Pi_2
\big)
&0
\end{bmatrix},
\end{eqnarray}
with quantities
$f$, $g$ from \eqref{def-f-g-ch4},
$X$, $Z$ from \eqref{def-x-y-z},
$F$ from \eqref{def-f-f},
and with projectors
$\Pi_1$, $\Pi_2$ from \eqref{def-pi1-pi2}.\\ It is now apparent that the parity symmetry is broken (see Remark \ref{remark-inv} and equation \eqref{domH_linear}).

\subsection*{Bifurcations
of eigenvalues from the essential spectrum}

Let $\lambda(\epsilon)$
be the deformation of the eigenvalue $2\omega\jj$ of $\bfA(\epsilon)$ from \eqref{def-a-ch2}
under the perturbation \eqref{nld-point-perturbed-2}.
As before
(see \eqref{lambda-lambda} and Theorem~\ref{theorem-instability}),
let $\Lambda\in\C$ and $\zeta\in\C$
be defined by relations
\begin{eqnarray}\label{lambda-lambda-zeta}
\lambda(\epsilon)=\jj\Lambda(\epsilon),
\qquad
\Lambda(\epsilon)=2\omega+\zeta(\epsilon).
\end{eqnarray}
The condition for the eigenvalue $\lambda(\epsilon)$
bifurcating from $2\omega\jj$
to be inside the first quadrant
(that is, the linear instability condition, $\Re\lambda>0$)
is now $\Im\zeta<0$.

\begin{theorem}\label{theorem-instability-broken}
Let $f(\tau)=\abs{\tau}^\kappa$, $\tau\in\R$\textup; $\kappa>0$.
There is
$\omega_0<m$ and an open neighborhood
$U\subset\R$, $U\ni 0$,
such that for
$\omega\in(\omega_0,m)$
and $\epsilon\in U\setminus\{0\}$
the spectrum
$\sigma\sb{\mathrm{p}}(\bfA(\epsilon))$
contains
eigenvalues
$\pm\lambda(\epsilon)$
and $\pm\overline{\lambda(\epsilon)}$,
with
\[
\lambda(\epsilon)=\jj(2\omega+\zeta(\epsilon)),
\qquad
\Im\zeta(\epsilon)<0\quad \forall\epsilon\in U\setminus\{0\},
\qquad
\lim\sb{\epsilon\to 0}\zeta(\epsilon)=0.
\]
That is, the solitary waves corresponding to
$\omega\in(\omega_0,m)$
are spectrally unstable.
\end{theorem}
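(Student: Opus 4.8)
The plan is to adapt the argument behind Theorem~\ref{theorem-instability}, now keeping track of the coupling between the two parity sectors $\bfX_{\mbox{\rm\footnotesize odd-even-odd-even}}$ and $\bfX_{\mbox{\rm\footnotesize even-odd-even-odd}}$ of $L^2(\R,\C^4)$ that the $\sigma_1$-terms in $\bfA(\epsilon)$ from \eqref{def-a-broken} produce. With $\lambda(\epsilon)=\jj\Lambda(\epsilon)$, $\Lambda=2\omega+\zeta$ as in \eqref{lambda-lambda-zeta}, I would seek an eigenfunction $\Psi$ of $\bfA(\epsilon)$ as a superposition of the four exponential modes already used in the proof of Theorem~\ref{theorem-instability}: two built from the rate $\nu_{+}(\omega,\Lambda)=\sqrt{m^{2}-(\Lambda-\omega)^{2}}$ (one in each parity sector) and two built from $\xi(\omega,\Lambda)=-\sqrt{(\omega+\Lambda)^{2}-m^{2}}$ (one in each parity sector), cf.\ \eqref{def-kappa-xi}. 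Imposing at $x=0$ the jump condition carried by \eqref{def-a-broken} gives a homogeneous $4\times4$ linear system $M(\zeta,\epsilon)(a,b,c,d)=0$, and the task is to study the zeros of the characteristic function $\mathcal{F}(\zeta,\epsilon):=\det M(\zeta,\epsilon)$. Since the symmetries \eqref{symmetry} persist for $\bfA(\epsilon)$, it is enough to exhibit a single zero of $\mathcal{F}$ with $\Im\zeta<0$, which by the remark following \eqref{lambda-lambda-zeta} is exactly the instability condition $\Re\lambda>0$.

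First I would examine the slice $\epsilon=0$, where $\bfA(0)$ is the unperturbed operator $\bfA(\omega,\kappa)$ (indeed $f=\abs{\tau}^\kappa=2\mu$ and $X=4\kappa\mu$ at $\epsilon=0$, while $Y$, $Z$ and $\beta$ drop out, by \eqref{f-is-f}, \eqref{bbroken}, \eqref{x-y-z-such}, \eqref{def-f-f}). There $M(\zeta,0)$ is block diagonal and $\mathcal{F}(\zeta,0)=\det M_{ab}(\zeta)\det M_{cd}(\zeta)$, with $M_{ab}$ governing the odd-even-odd-even sector and $M_{cd}$ the even-odd-even-odd one. Exactly as in \eqref{det-zero} (with $\nu_{-}$ replaced by $-\jj\xi$), $\det M_{ab}(\zeta)$ factors as $(\nu_{+}(\omega,\Lambda)-\mu S_{+}(\omega,\Lambda))$ times a factor that is nonzero at $\zeta=0$ (there it equals $\jj\sqrt{9\omega^{2}-m^{2}}-(m-3\omega)\mu$, using $\xi(\omega,2\omega)=-\sqrt{9\omega^{2}-m^{2}}$ and $S_{-}(\omega,2\omega)=m-3\omega$), and the first factor vanishes to exactly first order at $\zeta=0$ by the computation following \eqref{det-zero} --- this is the simple embedded eigenvalue $2\omega\jj$ of the odd-even-odd-even restriction, Theorem~\ref{theorem-sect2}~\itref{theorem-sect2-2}. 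Furthermore $\det M_{cd}(\zeta)$ does not vanish near $\zeta=0$ for $\omega$ close to $m$, because the even-odd-even-odd eigenvalues $\jj\Lambda_{\pm}$ of \eqref{lambda-plus-minus} tend to $0$ as $\omega\to m$ and hence stay away from $2\omega\jj$. Thus $\partial_\zeta\mathcal{F}(0,0)\neq0$, the implicit function theorem yields a unique analytic branch $\zeta=\zeta(\epsilon)$ with $\zeta(0)=0$, and at $\epsilon=0$ the eigenfunction is just the $\nu_{+}$-mode of the odd-even-odd-even sector ($b=c=d=0$), the oscillatory $\xi$-mode being absent.

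The heart of the matter is the leading asymptotics of $\zeta(\epsilon)$, and above all the sign of $\Im\zeta(\epsilon)$. In the pure-power model all the within-sector $\epsilon$-dependence of $M_{ab}$ --- through $f$, $X$, $Z$ and through $\beta=\mathcal{O}(\epsilon)$ --- is only $\mathcal{O}(\epsilon^{2}\mu^{2})$ and is \emph{real}, by \eqref{f-is-f}, \eqref{bbroken}, \eqref{x-y-z-such}, \eqref{def-f-f}; the cross-sector coupling entries of $M$, driven by the $\epsilon F\sigma_{1}$-term in \eqref{def-a-broken}, are $\mathcal{O}(\epsilon\mu)$. A Schur-complement (equivalently, second-order perturbation) expansion then gives $\mathcal{F}(\zeta,\epsilon)=\det M_{cd}(\zeta)\cdot\det(M_{ab}(\zeta)-\epsilon^{2}N_{1}M_{cd}(\zeta)^{-1}N_{2})$, so the correction to the simple zero at $\zeta=0$ is $\mathcal{O}(\epsilon^{2})$, and its imaginary part is carried precisely by the oscillatory mode: since $\jj\xi(\omega,2\omega)=-\jj\sqrt{9\omega^{2}-m^{2}}$ is purely imaginary and nonzero, tracing this factor through the Schur complement gives $\zeta(\epsilon)=(\zeta_{1}+\jj\zeta_{2})\epsilon^{2}+\mathcal{O}(\epsilon^{3})$ with $\zeta_{1},\zeta_{2}\in\R$ and $\zeta_{2}$ an explicit combination of $f$, $F$, $X$, $\varkappa$, $\mu$, $S_{\pm}$ and $\sqrt{9\omega^{2}-m^{2}}$ evaluated at $\zeta=0$. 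This is the non-selfadjoint (Krein-signature) incarnation of the Fermi golden rule: the embedded simple eigenvalue $2\omega\jj$, once coupled to the continuous spectrum by the parity-breaking term, must leave the imaginary axis. Passing to the regime $\mu\to0$, i.e.\ $\omega\to m$ --- where $\varkappa,\mu\to0$ while $\sqrt{9\omega^{2}-m^{2}}\to 2\sqrt{2}\,m$ and every coefficient has a clean limit --- I would verify that $\zeta_{2}<0$; this selects $\omega_{0}<m$ and, for $\omega\in(\omega_{0},m)$ and $\abs{\epsilon}$ small, forces $\Im\zeta(\epsilon)=\zeta_{2}\epsilon^{2}+\mathcal{O}(\epsilon^{3})<0$ for both signs of $\epsilon$.

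Finally, with $\Im\zeta(\epsilon)<0$ one has $\Im\xi(\omega,2\omega+\zeta(\epsilon))>0$, so $e^{\jj\xi(\omega,2\omega+\zeta(\epsilon))\abs{x}}$ decays as $\abs{x}\to\infty$; together with $\Re\nu_{+}\approx\varkappa>0$ this gives $\Psi\in L^{2}(\R,\C^{4})$, hence $\lambda(\epsilon)=\jj(2\omega+\zeta(\epsilon))$ is a genuine eigenvalue of $\bfA(\epsilon)$ with $\Re\lambda(\epsilon)=-\Im\zeta(\epsilon)>0$ --- the claimed linear instability --- and $\lim_{\epsilon\to0}\zeta(\epsilon)=0$ is immediate from $\zeta(0)=0$. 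I expect the one genuinely delicate step to be the sign of $\zeta_{2}$: had it come out positive, the branch $\zeta(\epsilon)$ would lie on the unphysical sheet and would produce an antibound-state resonance rather than an eigenvalue, so it is exactly this Fermi-golden-rule coefficient --- not the existence of the branch, which is soft --- that has to be pinned down, and the restriction $\omega\to m$ is what renders the computation tractable. A subsidiary technical point is that the radius in $\zeta$ on which $\mathcal{F}$ is analytic shrinks with $\varkappa$ as $\omega\to m$, so one tracks the $\varkappa$-dependence of the coefficients to keep the neighborhood $U$ uniform in $\omega$.
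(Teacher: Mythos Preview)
Your proposal is correct and follows essentially the same route as the paper: the same four-mode ansatz, the same $4\times4$ jump system organized into the block form $\begin{bmatrix}A&\epsilon B\\\epsilon C&D\end{bmatrix}$, and the same Schur-complement reduction $\det(A-\epsilon^{2}BD^{-1}C)=0$ to isolate the $\epsilon^{2}$-shift of the embedded eigenvalue. The one cosmetic difference is that the paper, instead of invoking the implicit function theorem, squares the relation $2\nu_{+}=S_{+}f+\epsilon^{2}(\cdots)$ to get a quadratic in $\zeta$ directly --- this neatly sidesteps the shrinking-radius branch-point issue you flag at the end --- and then computes the imaginary part explicitly to obtain $\Im\zeta<-c\,\epsilon^{2}\mu^{3}$.
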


\begin{proof}
As in the proof of Theorem~\ref{theorem-instability},
to study
whether
$
\lambda(\epsilon)=\jj\Lambda(\epsilon)
$
is an eigenvalue
of the operator $\bfA(\epsilon)$
from \eqref{def-a-broken},
we consider the action of $\bfA(\epsilon)-\jj\Lambda(\epsilon) I_4$
onto the superposition
\[
\Psi
=
a
\begin{bmatrix}
\nu_{+}\sgn x\\S_{+}\\-\jj\nu_{+}\sgn x\\-\jj S_{+}
\end{bmatrix}e^{-\nu_{+}\abs{x}}
+
b
\begin{bmatrix}
-\jj\xi\sgn x\\S_{-}\\\xi\sgn x\\\jj S_{-}
\end{bmatrix}e^{\jj\xi\abs{x}}
+
c
\begin{bmatrix}
\nu_{+}\\S_{+}\sgn x\\-\jj\nu_{+}\\-\jj S_{+}\sgn x
\end{bmatrix}e^{-\nu_{+}\abs{x}}
+
d
\begin{bmatrix}
-\jj\xi\\S_{-}\sgn x\\\xi\\\jj S_{-}\sgn x
\end{bmatrix}e^{\jj\xi\abs{x}}.
\]
Above,
$S_{+}=S_{+}(\omega,\Lambda)$ and $S_{-}=S_{-}(\omega,\Lambda)$ are from \eqref{def-r-s}
and
$\nu_{+}=\nu_{+}(\omega,\Lambda)$ and $\xi=\xi(\omega,\Lambda)$ are given by \eqref{def-kappa-xi}.
The jump condition at $x=0$
leads to the relations
\[
\begin{cases}
2(-\jj S_{+} c+\jj S_{-} d)
-(-\jj\nu_{+} c+\xi d)f
-\epsilon (-\jj S_{+} a+\jj S_{-} b)f
=0,
\\-2(-\jj\nu_{+} a+\xi b)
+(-\jj S_{+} a+\jj S_{-} b)f
-\epsilon (-\jj\nu_{+} c+\xi d)f
=0,
\\-2(S_{+} c+S_{-} d)
+(f+X)(\nu_{+} c-\jj\xi d)
+\epsilon (S_{+} a+S_{-} b)F
=0,
\\2(\nu_{+} a-\jj\xi b)
-(f-\epsilon^2 Z)(S_{+} a+S_{-} b)
+\epsilon (\nu_{+} c-\jj\xi d)F
=0,
\end{cases}
\]
with
$f$ from \eqref{def-f-g-ch4},
$F=f+Y$ from \eqref{def-f-f},
and $X$, $Y$, $Z$ from \eqref{def-x-y-z}.
Above, the first terms in the left-hand side
correspond to the contributions from the derivative.
The assumption that
$a,\,b,\,c,\,d\in\C$ are not simultaneously zeros
leads to the condition
\[
\det
\begin{bmatrix}
\jj\epsilon S_{+} f&-\jj\epsilon S_{-} f&-2\jj S_{+}+\jj f\nu_{+}&2\jj S_{-}-f\xi
\\
2\jj\nu_{+}-\jj S_{+} f&-2\xi+\jj S_{-} f&\jj\epsilon f \nu_{+}&-\epsilon f \xi
\\
\epsilon S_{+} F&\epsilon S_{-} F&-2 S_{+}+(f+X)\nu_{+}&-2 S_{-}-\jj(f+X)\xi
\\
2\nu_{+}-(f-\epsilon^2 Z)S_{+}&-2\jj\xi-(f-\epsilon^2 Z)S_{-}&\epsilon F\nu_{+}&-\jj\epsilon F\xi
\end{bmatrix}
=0,
\]
which we rewrite as
\begin{eqnarray}\label{det-det}
\hskip -10pt
\det
\begin{bmatrix}
2\nu_{+}-S_{+} f&-2\jj\xi-S_{-} f&\epsilon f \nu_{+}&-\jj\epsilon f \xi
\\2\nu_{+}-(f-\epsilon^2 Z)S_{+}&2\jj\xi+(f-\epsilon^2 Z)S_{-}&\epsilon F\nu_{+}&\jj\epsilon F\xi
\\\epsilon S_{+} f&\epsilon S_{-} f&-2 S_{+}+f\nu_{+}&-2 S_{-}-\jj f\xi
\\\epsilon S_{+} F&-\epsilon S_{-} F&-2 S_{+}+(f+X)\nu_{+}&2 S_{-}+\jj(f+X)\xi
\end{bmatrix}
=0.
\end{eqnarray}
Let $A$, $B$, $C$, and $D$
be the $2\times  2$ matrices 
so that the above matrix
is written in the block form as
$
\begin{bmatrix}
A&\epsilon B\\\epsilon C&D
\end{bmatrix}
$;
that is,
\begin{eqnarray}
\label{matrix-a}
A=\begin{bmatrix}
2\nu_{+}-S_{+} f&-S_{-} f-2\jj\xi
\\2\nu_{+}-S_{+} f+\epsilon^2 Z S_{+}&2\jj\xi+S_{-} f-\epsilon^2 Z S_{-}
\end{bmatrix},
\qquad
B=
\begin{bmatrix}
f \nu_{+}&-\jj f \xi
\\F\nu_{+}&\jj F\xi
\end{bmatrix},
\end{eqnarray}
\begin{eqnarray}
\label{matrix-c-d}
C=
\begin{bmatrix}
S_{+} f&S_{-} f
\\S_{+} F&-S_{-} F
\end{bmatrix},
\qquad
D=
\begin{bmatrix}
-2 S_{+}+f\nu_{+}&-2 S_{-}-\jj f\xi
\\-2 S_{+}+(f+X)\nu_{+}&2 S_{-}+\jj(f+X)\xi
\end{bmatrix};
\end{eqnarray}
recall that $S_{+}$, $S_{-}$ are from \eqref{def-r-s}
and $\nu_{+}$, $\xi$ are from \eqref{def-kappa-xi}.
Since $\lim\sb{\omega\to m,\,\Lambda\to 2m}\det D=32 m^2$ (see \eqref{det-d} below),
we can use the Schur complement of $D$ to rewrite
\eqref{det-det} as
$\det(A-\epsilon^2 M)=0$,
with $M=B D^{-1}C$.
We have:
\begin{eqnarray}\label{def-m}
M:=B D^{-1}C=\frac{1}{\det D}
\begin{bmatrix}
f\nu_{+}&-\jj f\xi
\\F\nu_{+}&\jj F\xi
\end{bmatrix}
\begin{bmatrix}
2 S_{-}+\jj (f+X) \xi&2 S_{-}+\jj f \xi
\\2 S_{+}-(f+X)\nu_{+}&-2 S_{+}+f\nu_{+}
\end{bmatrix}
\begin{bmatrix}
S_{+} f&S_{-} f
\\S_{+} F&-S_{-}F
\end{bmatrix}.
\end{eqnarray}
Taking
into account that $f=\mathcal{O}(\mu)$ (see \eqref{f-is-f}),
$F=\mathcal{O}(\mu)$ (see \eqref{def-f-f}),
$S_{+}+S_{-}=2(m-\omega)=\mathcal{O}(\mu^2)$,
\begin{eqnarray*}
&&
\hskip -10pt
M=\frac{1}{\det D}
\begin{bmatrix}
f\nu_{+}&-\jj\xi f
\\
F\nu_{+}&\jj \xi F
\end{bmatrix}
\begin{bmatrix}
2 S_{-}&2 S_{-}
\\2 S_{+}&-2 S_{+}
\end{bmatrix}
\begin{bmatrix}
S_{+} f&S_{-} f
\\S_{+} F&-S_{-} F
\end{bmatrix}
+\mathcal{O}(\mu^3)
\\
&&
\hskip -10pt
=\frac{2 S_{+}^2}{\det D}
\begin{bmatrix}
f\nu_{+}&-\jj\xi f\\F\nu_{+}& \jj\xi F
\end{bmatrix}
\!
\begin{bmatrix}
-1&-1\\1&-1
\end{bmatrix}
\!
\begin{bmatrix}
f&-f\\F&F
\end{bmatrix}
+\mathcal{O}(\mu^3)
=
\frac{2 S_{+}^2}{\det D}
\begin{bmatrix}
f\nu_{+}&-\jj\xi f\\F\nu_{+}& \jj\xi F
\end{bmatrix}
\!
\begin{bmatrix}-f-F&f-F\\f-F&-f-F\end{bmatrix}
+\mathcal{O}(\mu^3)
\\
&&
\hskip -10pt
=\frac{2 S_{+}^2}{\det D}
\begin{bmatrix}
(-f-F)f\nu_{+}-\jj(f-F)f\xi
&
(f-F)f\nu_{+}-\jj(-f-F)f\xi
\\
(-f-F)F\nu_{+}+\jj(f-F)F\xi
&
(f-F)F\nu_{+}
+\jj(-f-F)F\xi
\end{bmatrix}
+\mathcal{O}(\mu^3).
\end{eqnarray*}
In the second line, we substituted $S_{-}$ by $-S_{+}$,
with the error counted in the $\mathcal{O}(\mu^3)$ term.
It follows that
\begin{eqnarray}\label{m11-m21}
M_{11}+M_{21}
=
2(\det D)^{-1}S_{+}^2
\big(
-(f+F)^2\nu_{+}
-\jj(F-f)^2\xi
\big)
+\mathcal{O}(\mu^3)
\nonumber
\\
=
-2({\det D})^{-1}S_{+}^2
\big(
(2f+Y)^2\nu_{+}
+\jj Y^2\xi
\big)
+\mathcal{O}(\mu^3),
\end{eqnarray}
with $Y$ from \eqref{def-x-y-z}.
Taking into account that
$A_{21}=A_{11}+\epsilon^2 Z S_{+}$
and
$A_{22}=-A_{12}-\epsilon^2 Z S_{-}$,
we derive:
\begin{eqnarray*}
&&\det(A-\epsilon^2 B D^{-1}C)
=
(A_{11}-\epsilon^2 M_{11})(A_{22}-\epsilon^2 M_{22})
-
(A_{12}-\epsilon^2 M_{12})(A_{21}-\epsilon^2 M_{21})
\\&&
=
(A_{11}-\epsilon^2M_{11})(-A_{12}-\epsilon^2 Z S_{-} -\epsilon^2 M_{22})
-
(A_{12}-\epsilon^2 M_{12})(A_{11}+\epsilon^2 Z S_{+}-\epsilon^2 M_{21})=0,
\end{eqnarray*}
\begin{eqnarray*}
-2A_{11}A_{12}
-A_{11}
(\epsilon^2 Z S_{-} +\epsilon^2 M_{22}-\epsilon^2 M_{12})
+A_{12}(\epsilon^2 M_{11}-\epsilon^2 Z S_{+}+\epsilon^2 M_{21})
\\
+\epsilon^4 M_{11}(Z S_{-}+M_{22})
+\epsilon^4 M_{12}(Z S_{+}-M_{21})
=0,
\end{eqnarray*}
\[
A_{11}
=
\epsilon^2
\frac{A_{12}(M_{11}+M_{21}-Z S_{+})
+\epsilon^2
(M_{11}(Z S_{-}+M_{22})+M_{12}(Z S_{+}-M_{21}))}
{2A_{12}+\epsilon^2(Z S_{-}+M_{22}-M_{12})},
\]
\[
2\nu_{+}=S_{+} f
+
\epsilon^2
\frac{A_{12}(M_{11}+ M_{21}-Z S_{+})
+\epsilon^2
(M_{11}(Z S_{-}+M_{22})+M_{12}(Z S_{+}-M_{21}))}
{2A_{12}+\epsilon^2(Z S_{-}+M_{22}-M_{12})}.
\]
Substituting
$\nu_{+}=\sqrt{m^2-(\omega-\Lambda)^2}=\sqrt{m^2-(\omega-(2\omega+\zeta))^2}$
(see \eqref{lambda-lambda-zeta}),
we arrive at
\begin{eqnarray}\label{zeta-is-0}
\hskip -15pt
\zeta^2+2\omega\zeta
=
m^2-\omega^2
-\Big(
\frac{S_{+} f}{2}
+
\epsilon^2
\frac{M_{11}+M_{21}-Z S_{+}
+\frac{\epsilon^2}{A_{12}}
(M_{11}(Z S_{-}+M_{22})+M_{12}(Z S_{+}-M_{21}))}
{4+2\epsilon^2(Z S_{-}+M_{22}-M_{12})/A_{12}}
\Big)^2.
\end{eqnarray}
Taking into account \eqref{f-is-f} and \eqref{x-y-z-such},
the entries of $M$ from \eqref{def-m}
are estimated by
$M_{i j}=\mathcal{O}(\mu^2)$,
$1\le i,\,j\le 2$;
since
$A_{12}=-S_{-} f-2\jj\xi\longrightarrow  -4\jj m\sqrt{2}$
in the limit $\epsilon\to 0$, $\omega\to m$,
$\Lambda\to 2m$,
\eqref{zeta-is-0} yields the relation
\begin{eqnarray}\label{zeta-is}
\zeta^2+2\omega\zeta
=
m^2-\omega^2
-\Big(
\frac{S_{+} f}{2}
+
\epsilon^2
\frac{M_{11}+M_{21}-S_{+}Z
+
\mathcal{O}(\epsilon^2\mu^3)
}
{4-
\mathcal{O}(\epsilon^2\mu)}
\Big)^2.
\end{eqnarray}
Writing
\begin{eqnarray}\label{large-error}
\zeta^2+2\omega\zeta
&=&
(m+\omega)^2\mu^2
-
\Big(
\frac{1}{2}S_{+} f
+\mathcal{O}(\epsilon^2\mu)
\Big)^2
\nonumber
\\&=&(m+\omega)^2\mu^2
-(m+\omega+\zeta)^2f^2/4
-S_{+} f
\epsilon^2 \mathcal{O}(\mu)
+\mathcal{O}(\epsilon^4\mu^2)
\end{eqnarray}
(note that
the largest error term,
$\mathcal{O}(\epsilon^4\mu^2)$,
is contributed by squaring
$\epsilon^2 S_{+}Z$
in the right-hand side of \eqref{zeta-is}),
we have:
\begin{eqnarray*}
\zeta^2+2\omega\zeta
&=&
(m+\omega)^2
\Big(\mu^2-\frac{f^2}{4}\Big)
-
\frac{2(m+\omega)\zeta+\zeta^2}{4}f^2
+\mathcal{O}(\epsilon^2\mu^2)
\\&=&
(m+\omega)^2\mathcal{O}(\epsilon^2\mu^2)
-\mu^2\big(2(m+\omega)\zeta+\zeta^2\big)
+\mathcal{O}(\epsilon^2\mu^2),
\end{eqnarray*}
hence
$\zeta=\mathcal{O}(\epsilon^2\mu^2)$.
In view of this,
\begin{eqnarray}\label{r-s-m}
\begin{array}{l}
S_{+}(\omega,\Lambda)=m+\Lambda-\omega
=m+\omega+\zeta=2m+\mathcal{O}(\mu^2),
\\[1ex]
S_{-}(\omega,\Lambda)=m-\omega-\Lambda=-2m+\mathcal{O}(\mu^2),
\\[1ex]
\nu_{+}(\omega,\Lambda)
=\sqrt{m^2-(\Lambda-\omega)^2}
=\sqrt{m^2-(\omega+\zeta)^2}
=\mathcal{O}(\mu).
\end{array}
\end{eqnarray}
Now we can compute the determinant of the matrix
$D$ from \eqref{matrix-c-d}:
\begin{eqnarray}\label{det-d}
&&
\det D
=
(-2 S_{+}+f\nu_{+})(2 S_{-}+\jj(f+X)\xi)
+(2 S_{-}+\jj f\xi)(-2 S_{+}+(f+X)\nu_{+})
\nonumber
\\
&&
=
-8S_{+}S_{-}
+2 (2f+X)S_{-}\nu_{+}
+\jj
\big(
2(f+X)f\nu_{+}\xi
-2 (2f+X)S_{+}\xi
\big)
=32m^2+\mathcal{O}(\mu),
\qquad
\end{eqnarray}
with the error term being complex-valued.
Taking the imaginary part of
\eqref{zeta-is},
we obtain:
\begin{eqnarray}
\label{n-d}
2(\omega+\Re\zeta)
\Im\zeta
=
-
\epsilon^2
S_{+} f\Im
\frac{M_{11}+M_{21}-S_{+}Z
+
\mathcal{O}(\epsilon^2\mu^3)}
{4-\mathcal{O}(\epsilon^2\mu)}
+\mathcal{O}(\epsilon^4\mu^3).
\end{eqnarray}
\begin{remark}
In \eqref{n-d}, the error term is
$\mathcal{O}(\epsilon^4\mu^3)$
(instead of $\mathcal{O}(\epsilon^4\mu^2)$ as in
\eqref{large-error});
indeed, by \eqref{r-s-m},
\begin{eqnarray}\label{z-r-small}
S_{+}Z
=(m+\omega+\mathcal{O}(\epsilon^2\mu^2))Z;
\end{eqnarray}
since $Z$ from \eqref{def-x-y-z} is real-valued,
$(\epsilon^2 S_{+}Z)^2$
cannot contribute
$\mathcal{O}(\epsilon^4\mu^2)$
to the imaginary part of the right-hand side.
\end{remark}

Since the numerator in \eqref{n-d}
is $\mathcal{O}(\mu)$,
and so is the factor $S_{+} f$,
we conclude that neglecting
$\mathcal{O}(\epsilon^2\mu)$ terms from
the denominator contributes the error absorbed into
$\mathcal{O}(\epsilon^4\mu^3)$, so
\[
2(\omega+\Re\zeta)
\Im\zeta
=
-
({\epsilon^2}/{4})
S_{+} f\Im
\left(M_{11}+M_{21}-S_{+}Z
\right)
+\mathcal{O}(\epsilon^4\mu^3).
\]
Using \eqref{m11-m21}
(where in view of \eqref{r-s-m}
one has $(2f+Y)^2\nu_{+}=\mathcal{O}(\mu^3)$), \eqref{det-d},
and taking into account
the fact that
$Z=\mathcal{O}(\mu)$ is real-valued
while $S_{+}=m+\omega+\zeta$,
we continue:
\begin{eqnarray}\label{finalformula}
&&
2(\omega+\Re\zeta)\Im\zeta
=
-(\epsilon^2/4)
S_{+} f
\Im
\big(M_{11}+M_{21}-S_{+}Z\big)
+\mathcal{O}(\epsilon^4\mu^3)
\\&&
\nonumber
=
(\epsilon^2/4)
S_{+} f
\Im
\Big(
\frac{2\jj\xi S_{+}^2}{\det D}Y^2+\mathcal{O}(\mu^3)
+Z\zeta
\Big)
+\mathcal{O}(\epsilon^4\mu^3)
=
(\epsilon^2/4)
f
\frac{\xi S_{+}^3}{16 m^2}Y^2
+\mathcal{O}(\epsilon^2\mu^4)
+\mathcal{O}(\epsilon^4\mu^3).
\end{eqnarray}
Taking into account the relations
\[
\lim\sb{\omega\to m,\,\Lambda\to 2m}S_{+}(\omega,\Lambda)=2m,
\qquad
\lim\sb{\omega\to m,\,\Lambda\to 2m}\xi(\omega,\Lambda)=-2m\sqrt{2},
\qquad
Y=4\kappa\mu(1+\mathcal{O}(\mu))
\]
(see \eqref{def-r-s}, 
\eqref{def-kappa-xi},
\eqref{x-y-z-such}),
we conclude from \eqref{finalformula} that
there is $c>0$ such that
$
\Im\zeta
<-c\epsilon^2\mu^3$,
as long as
$\abs{\epsilon}$ and $\mu>0$ are sufficiently small.
It follows that the eigenvalue
$\lambda=(2\omega+\zeta)\jj$ moves to the right of the imaginary axis,
becoming an eigenvalue with positive real part and indicating
the linear instability of the corresponding solitary wave.
\end{proof}

\vskip10pt
\noindent
{\bf Acknowledgments.}
\ The authors are grateful to the anonymous referees
for pointing out several omissions in the article.
D. Noja
acknowledges for funding the EC grant IPaDEGAN (MSCA-RISE-778010).
This work was supported by a grant from the Simons Foundation (851052, A.C.).

\bibliographystyle{sima-doi}
\bibliography{dirac-delta,bibcomech}
\end{document}